\font\msbm=msbm10
\theoremstyle{plain}
\newtheorem{theorem}{Theorem}
\newtheorem{lemma}[theorem]{Lemma}
\newtheorem{corollary}[theorem]{Corollary}
\newtheorem{proposition}[theorem]{Proposition}
\theoremstyle{definition}
\newtheorem{remark}[theorem]{Remark}
\def\mathbb#1{\hbox{\msbm{#1}}}
\newcommand{\field}[1]{\ensuremath{\mathds{#1}}}
\newcommand{\re}{\field R}\newcommand{\N}{\field N}
\newcommand{\C}{\ensuremath{\mathbb{C}}}
\newcommand{\Z}{\field Z}
\newcommand{\R}{\field R}
\newcommand{\id}{\ensuremath{\mathrm{id}}}
\newcommand{\Id}{\ensuremath{\mathrm{Id}}}
\newcommand{\be}{\begin{equation}}
\newcommand{\ee}{\end{equation}}
\newcommand{\beq}{\begin{eqnarray}}
\newcommand{\beqq}{\begin{eqnarray*}}
\newcommand{\eeq}{\end{eqnarray}}
\newcommand{\eeqq}{\end{eqnarray*}}
\DeclareMathOperator{\vol}{vol}
\newcommand{\ti}{\times}
\newcommand{\mixedell}[2]{\ell_#1^b(\ell_#2^d)}
\newcommand{\card}{\mathrm{card}}
\begin{document}

\title{Entropy numbers of finite dimensional mixed-norm balls and function space  embeddings with small mixed smoothness}
\author{Sebastian Mayer$^{a}$ and Tino Ullrich$^{b, }$\footnote{Corresponding author: tino.ullrich@mathematik.tu-chemnitz.de}
 \\\\
$^a$ Fraunhofer SCAI, Schloss Birlinghoven, Sankt Augustin, Germany\\
$^b$ TU Chemnitz, Fakult\"at f\"ur Mathematik, 09107 Chemnitz}
\maketitle

\begin{abstract}
We study the embedding $\id: \ell_p^b(\ell_q^d) \to \ell_r^b(\ell_u^d)$ and prove matching bounds for the entropy numbers $e_k(\id)$ provided that $0<p<r\leq \infty$ and $0<q\leq u\leq \infty$. Based on this finding, we establish optimal dimension-free asymptotic rates for the entropy numbers of embeddings of Besov and Triebel-Lizorkin spaces of \emph{small} dominating mixed smoothness, which gives a complete answer to Open Problem 6.4 in \cite{dung/ullrich/temlyakov:2015:hyperbolic_cross}. Both results rely on a novel covering construction recently found by Edmunds and Netrusov \cite{edmunds/netrusov:2014:schuett}. 
\end{abstract}

\section{Introduction}

Entropy numbers quantify the degree of compactness of a set, i.e., how well the set can be approximated by a finite set. Given a compact set~$K$ in a quasi-Banach space~$Y$, the~$k$-th entropy number~$e_k(K,Y)$ is defined to be the smallest radius~$\varepsilon > 0$ such that~$K$ can be covered with~$2^{k-1}$ copies of the ball~$\varepsilon B_Y$, i.e., 
$$
		e_k(K,Y):= \inf\Big\{\varepsilon >0~:~\exists y_1,...,y_{2^{k-1}}\text{ such that }K \subset \bigcup\limits_{\ell =1}^{2^{k-1}} y_\ell + \varepsilon B_Y \Big\}\quad,\quad k\in \N\,.
$$
The concept of entropy numbers can be easily extended to operators. Given a compact operator~$T: X \to Y$, where~$X$ and~$Y$ are quasi-Banach spaces, the~$k$-th entropy number of the operator~$T$ is defined to be
$$
e_k(T: X \to Y) := e_k(T(B_X), Y).
$$
If the spaces~$X,Y$ are clear from the context, we will abbreviate~$e_k(T:X \to Y)$ by~$e_k(T)$.

Entropy numbers (or the inverse concept of metric entropy) belong to the fundamental concepts of approximation theory. They appear in various approximation problems, e.g., in the estimation of the decay of operator eigenvalues~\cite{carl:1981,edmunds/triebel:1996:function_spaces,koenig:1986:eigenvalue_distributions}, in the estimation of learning rates for machine learning problems~\cite{temlyakov:2011:greedy,williamson/smola/schoelkopf:1999:entropy}, or in bounding~$s$-numbers like approximation, Gelfand, or Kolmogorov numbers from below~\cite{carl:1981,hinrichs/kolleck/vybiral:2016:carl_quasi}. We note that Gelfand numbers find application in the recent field of compressive sensing~\cite{dirksen/ullrich:2017:gelfand,rauhut/foucart:compressive_sensing,hinrichs/kolleck/vybiral:2016:carl_quasi} and Information Based Complexity in general. Entropy numbers are also closely connected to small ball problems in probability theory~\cite{kuelbs:1993:metric_entropy,li:1999:approximation_metric_entropy}. For further applications and basic properties, we refer to the monographs~\cite{carl/stefani:1990,pietsch:1978:operator}, and the recent survey \cite[Chapter 6]{dung/ullrich/temlyakov:2015:hyperbolic_cross}. 

The subject of this paper is to improve estimates for entropy numbers of 
embeddings between function spaces of dominating mixed smoothness
\begin{equation}\label{emb2}
    \Id:S^{r_0}_{p_0,q_0}A(\Omega) \to S^{r_1}_{p_1,q_1}A^{\dag}(\Omega)\,, \quad A, A^{\dag} \in \{B, F\}\,,
\end{equation}
where~$\Omega \subset \R^n$ is a bounded domain,~$0<p_0, p_1, q_0, q_1 \leq \infty$, and~$r_0-r_1>(1/p_0-1/p_1)_+$. The case~$A=B$ stands for the scale of \emph{Besov spaces} of dominating mixed smoothness, while~$A=F$ refers to the scale of \emph{Triebel-Lizorkin spaces}, which includes classical~$L_p$ and Sobolev spaces of mixed smoothness. That is why \eqref{emb2} also includes the classical embeddings
\begin{equation}
    \Id:S^{r}_{p_0,q_0}A(\Omega) \to L_{p_1}(\Omega)\,, \quad A \in \{B, F\}\,,
\end{equation}
if~$r>1/p_0-1/p_1$. Function space embeddings of this type play a crucial role in hyperbolic cross approximation \cite{dung/ullrich/temlyakov:2015:hyperbolic_cross}. Entropy numbers of such embeddings have been the subject of intense study, see \cite{vybiral:2006:diss}, \cite[Chapt.\ 6]{dung/ullrich/temlyakov:2015:hyperbolic_cross} and the recent papers by A.S. Romanyuk \cite{Ro16_1, Ro16_2, Ro19} and V.N. Temlyakov \cite{Te17}. Note that there is a number of deep open problems connected to the case~$p_1 = \infty$, which reach out to probability and discrepancy theory~\cite[2.6, 6.4]{dung/ullrich/temlyakov:2015:hyperbolic_cross}. 

Typically, one observes asymptotic decays of the form 
$$e_m(\Id) \simeq_n m^{-(r_0-r_1)}(\log m)^{(n-1)\eta},$$
where~$\eta >0$. This behavior is also well-known for~$s$-numbers of these embeddings like approximation, Gelfand, or Kolmogorov numbers, see \cite{dung/ullrich/temlyakov:2015:hyperbolic_cross} and the references therein. Although the main rate is the same as in the univariate case, the dimension still appears in the logarithmic term. We show that the logarithmic term completely disappears in regimes of \emph{small smoothness}
$$1/p_0-1/p_1 < r_0-r_1 \leq 1/q_0-1/q_1.$$
That is, we establish sharp purely polynomial asymptotic bounds of the form 
\begin{equation}\label{loglog}
    e_m(\text{Id}) \simeq_n m^{-(r_0-r_1)}\quad,\quad m\in \N\,,
\end{equation}
which depends on the underlying dimension~$n$ only in the constant. This settles several open questions stated in the literature \cite{dung/ullrich/temlyakov:2015:hyperbolic_cross,vybiral:2006:diss}, see Section \ref{sec:besov}, and makes the framework highly relevant for high-dimensional approximation. 

A key ingredient in the proof of~\eqref{loglog} is a counterpart of Sch\"utt's theorem for the entropy numbers of the embedding 
$$
	\id:\ell_p^b(\ell_q^d) \to \ell_r^b(\ell_u^d), 
$$
where~$0<p<r\leq \infty$ and~$0<q\leq u\leq \infty$. We prove matching bounds for all parameter constellations. A particularly relevant case for the purpose of this paper is the situation where~$b\leq d$ and 
$$
		1/p-1/r > 1/q-1/u \geq 0\,.
$$
Here, we have the surprising behavior
     \begin{equation}\label{eq-0}
     e_k(\id) \simeq
     \begin{cases}
	 1&: 1\leq k\leq \log(bd),\\	
     \left(\frac{\log(ed/k)}{k}\right)^{1/q-1/u} &: \log(bd) \leq k \leq d,\\
     \left(\frac{d}{k}\right)^{1/p - 1/r} d^{-(1/q - 1/u)} &: d \leq k \leq bd,\\
	 b^{-(1/p-1/r)}d^{-(1/q-1/u)}2^{-\frac{k-1}{bd}} &: k\geq bd.
     \end{cases}
     \end{equation}
Note that this relation is not a trivial extension of the classical Sch\"utt result \cite{schuett:1984:entropy}, which reads as 
$$
e_k(\id: \ell_p^b \to \ell_r^b) \simeq \left\{
\begin{array}{rcl}1&:&1\leq k \leq \log(b),\\
\Big(\frac{\log(eb/k)}{k}\Big)^{1/p-1/r}&:&\log(b)\leq k\leq b,\\
2^{-\frac{k-1}{b}}b^{-(1/p-1/r)}&:&k\geq b\,,
\end{array}
\right.
$$
for the norm-$1$-embedding~$\id :\ell^b_p \to \ell^b_r$, where~$0<p\leq r\leq \infty$. In fact, using trivial embeddings would give 
an additional~$\log$-term in the third case of \eqref{eq-0}. The absence of this~$\log$-term makes \eqref{eq-0} interesting and useful as we will see below. 

For~$1 \leq k \leq \log(db)$ and~$k \geq bd$, it requires only trivial and standard volumetric arguments to establish matching bounds for the entropy numbers~$e_k(id: \ell_p^b(\ell_q^d) \to \ell_r^b(\ell_u^d))$. The middle range~$\log(bd)~\leq~k~\leq~bd$ is much more involved. In general, it is far from straightforward to generalize the proof ideas from~$d=1$ (Sch\"utt) to~$d>1$. Fortunately, the crucial work has already been done in a recent work by Edmunds and Netrusov~\cite{edmunds/netrusov:2014:schuett}. They prove a general abstract version of Sch\"utt's theorem for operators between vector-valued sequence spaces. It remains for us to turn these general, abstract bounds into explicit estimates for the entropy numbers~$e_k(\id:\ell_p^b(\ell_q^d)\to \ell_r^b(\ell_u^d))$. 
Unfortunately, the paper~\cite{edmunds/netrusov:2014:schuett} is written very concisely, which makes it difficult to follow the arguments at several points. Hence, we decided to provide some additional, explanatory material. We hope that Section~\ref{sec:edne_notes} helps a broader readership to appreciate the powerful ideas in~\cite{edmunds/netrusov:2014:schuett}, in particular, a novel covering construction based on dyadic grids.

\paragraph*{Outline.} The paper is organized as follows. In Section~\ref{sec:prelim}, we recapitulate basics definitions and results including entropy numbers and Sch\"utt's theorem. Afterwards, in Section~\ref{sec:edne_notes}, we discuss the generalization of Sch\"utt's theorem by~\cite{edmunds/netrusov:2014:schuett}. In Section~\ref{sec:matching_bounds}, we show consequences of this result, including matching bounds for the entropy numbers~$e_k(\id:\ell_p^b(\ell_q^d) \to \ell_r^b(\ell_u^d))$. Finally, we improve upper bounds for the entropy numbers of Besov and Triebel-Lizorkin embeddings in regimes of small smoothness in Section~\ref{sec:besov}.

\paragraph{Notation. } As usual~$\N$ denotes the natural numbers,~$\N_0:=\N\cup\{0\}$,~$\Z$ denotes the integers, 
$\R$ the real numbers,~$\R_+$ the positive real numbers, and~$\C$ the complex numbers. For
$a\in \R$ we denote~$a_+ := \max\{a,0\}$. We write~$\log$ for the natural logarithm.~$\R^{m\times n}$ denotes the set of all~$m\times n$-matrices with real entries and~$\R^n$ denotes the Euclidean space. 
Vectors are usually denoted with~$x,y\in \R^n$. For~$0<p\leq \infty$ and~$x\in \R^n$, we use the quasi-norm~$\|x\|_p := (\sum_{i=1}^n
|x_i|^p)^{1/p}$ with the usual modification in the case~$p=\infty$. 
If~$X$ is a (quasi-)normed space, then~$B_X$ denotes its unit ball and the (quasi-)norm
of an element~$x$ in~$X$ is denoted by~$\|x\|_X$. If~$X$ is a Banach space, then we denote its dual by~$X^\ast$. 
We will frequently use the quasi-norm constant, i.e., the smallest constant~$\alpha_X$ satisfying
$$
    \|x+y\|_X \leq \alpha_X(\|x\|_X + \|y\|_X), \qquad \text{for all } x,y\in X.
$$
For a given~$0<p\leq 1$ we say that~$\|\cdot\|_X$ is a~$p$-norm if 
$$
    \|x+y\|^p_X \leq \|x\|_X^p + \|y\|_X^p, \qquad \text{for all } x,y\in X.
$$
As is well known, any quasi-normed space can be equipped with an equivalent~$p$-norm (for a certain~$0<p\leq 1$, see ~\cite{aoki:1942,rolewicz:1957}). If~$T:X\to Y$ is a continuous operator we write~$T\in
\mathcal{L}(X,Y)$ and~$\|T\|$ for its operator (quasi-)norm. The notation~$X \hookrightarrow Y$ indicates that the
identity operator~$\Id:X \to Y$ is continuous. For two non-negative sequences~$(a_n)_{n=1}^{\infty},(b_n)_{n=1}^{\infty}\subset \R$  we
write~$a_n \lesssim b_n$ if there exists a constant~$c>0$ such that~$a_n \leq
c\,b_n$ for all~$n$. We will write~$a_n \simeq b_n$ if~$a_n \lesssim b_n$ and
$b_n \lesssim a_n$. If~$\alpha$ is a set of parameters, then we write~$a_n \lesssim_{\alpha} b_n$ if there exists a constant~$c_{\alpha}>0$ depending only on~$\alpha$ such that~$a_n \leq
c_{\alpha}\,b_n$ for all~$n$.\par 
Let~$b,d \in \N$. For $0<p,q \leq \infty$, the~$bd$-dimensional mixed space~$\ell_p^b(\ell_q^d)$ is defined as the space of all matrices~$x \in \R^{b\ti d}$ equipped with the mixed (quasi-)norm
\[ 
 \|x\|_{p,q} := \left(\sum_{i=1}^b \Big(\sum_{j=1}^d |x_{ij}|^q \Big)^{p/q}\right)^{1/p}, \qquad 0<p<\infty, \; 0<q<\infty\,,
\]
with the usual modification that the corresponding sum is replaced by a maximum in the case that either~$p=\infty$ or~$q=\infty$.
We always refer to the~$\ell_p$-space supported on~$[b]:=\{1,\ldots,b\}$ as the \emph{outer} space and to the~$\ell_q$-space supported on~$[d]$ as the \emph{inner} space. For any~$S\subset[b]\ti[d]$ and~$x\in \R^{b\ti d}$ we define~$x_S$ as the matrix~$(x_S)_{ij} = x_{ij}$ for~$(i,j)\in S$,~$(x_S)_{ij} = 0$ for~$(i,j)\in S^c$.

\section{Entropy numbers and Sch\"utt's theorem}
\label{sec:prelim}
Let us recall basic notions and properties concerning entropy numbers. Let $K$ be a subset of a quasi-Banach space $Y$. Given $\varepsilon > 0$, an \emph{$\varepsilon$-covering} is a set of points~$x_1,\dots,x_n \in K$ such that
\[ 
 K \subset \bigcup_{i=1}^n \big( x_i + \varepsilon B_Y \big)\,.
\]
An \emph{$\varepsilon$-packing} is a set of points~$x_1,\dots, x_m \in K$ such that $\|x_i - x_j\|_Y > \varepsilon$ for pairwise different~$i,j$. The \emph{covering number}~$N_\varepsilon(K,Y)$ is the smallest $n$ such that there exists an~$\varepsilon$-covering of $K$, while the \emph{packing number} $M_\varepsilon(K, Y)$ is the largest $m$ such that there exists an $\varepsilon$-packing of $K$. It is easy to see that 
$$
 M_{2\varepsilon}(K,Y) \leq N_\varepsilon(K,Y) \leq M_\varepsilon(K,Y).
$$
The \emph{metric entropy} is defined to be
$$H_\varepsilon(K,Y) = \log_2 N_\varepsilon(K,Y)\,,$$
see Remark~\ref{rem:notions_of_entropy} for the relation of metric entropy to other notions of entropy.

The $k$-th \emph{entropy number} $e_k(K,Y)$ can be redefined as
\[
 e_k(K, Y) := \inf \{ \varepsilon > 0: H_\varepsilon(K,Y) \leq k-1 \}.
\]
It is easy to see that the sequence of entropy numbers is decaying, i.e., $e_1 \geq e_2 \geq \dots \geq 0$. Moreover, the set $K$ is compact in $X$ if and only if $\lim_{k \to \infty} e_k(K,Y) = 0$.

Let~$T$ denote an operator mapping between two quasi-Banach spaces~$X$ and~$Y$. Recall from the introduction that the operator's entropy numbers are given by
$$e_k(T: X \to Y) = e_k(T(B_X),Y), \quad k \in \N.$$
Clearly, we have 
$$
	\|T\| = e_1(T)	\geq e_2(T) \geq e_3(T) \geq \cdots \geq e_k(T) \geq 0\,.
$$
If~$T_1,T_2$ are both operators from~$X$ to~$Y$, and~$Y$ is a~$\vartheta$-normed space, then the entropy numbers of the sum can be estimated as follows
\begin{equation}\label{ek(T1+T2)}
 e_{k_1 + k_2-1}(T_1 + T_2)^\vartheta \leq e_{k_1}(T_1)^\vartheta + e_{k_2}(T_2)^\vartheta.
\end{equation} 
Moreover, if $S \in \mathcal{L}(X,Y)$ and $R \in \mathcal{L}(Y,Z)$ then 
\begin{equation}\label{ek(RS)}
 e_{k_1 + k_2-1}(R\circ S) \lesssim e_{k}(R)e_{k_2}(S)\,.
\end{equation}
In particular, this gives 
\begin{equation}\label{ek(RS)2}
 e_{k}(R\circ S) \leq e_{k}(R)\|S\|\,.
\end{equation}
For further general properties of entropy numbers and basic estimates, we refer the reader to the monographs~\cite{carl/stefani:1990,lorentz/golitschek/makovoz:1996:cs_advanced,pietsch:s-numbers_and_eigenvalues}. For remarks on the history of entropy number research, see~\cite{carl/stefani:1990,williamson/smola/schoelkopf:1999:entropy}.

In the concrete situation where $X=\ell_p^b$ and $Y=\ell_q^b$ for $0<p \leq q \leq \infty$, the entropy numbers of the embedding $\id: \ell_p^b \to \ell_q^d$ are completely understood in terms of their decay in~$k$ and~$b$. This central result is often referred to as \emph{Sch\"utt's theorem}. For its history and references, see Remark~\ref{rem:schuett}. We only state the interesting case $0<p<q \leq \infty$ here.

\begin{theorem}[Sch\"utt's theorem]\label{lem:schuett}
For~$0<p\leq q \leq \infty$ and~$k,b \in \N$, we have
$$
e_k(\id: \ell_p^b \to \ell_q^b) \simeq \left\{
\begin{array}{rcl}1&:&1\leq k \leq \log(b),\\
\Big(\frac{\log(1+b/k)}{k}\Big)^{1/p-1/q}&:&\log(b)\leq k\leq b,\\
2^{-k/b}b^{1/q-1/p}&:&k\geq b.
\end{array}
\right.
$$
The constants in the estimates do neither depend on~$k$ nor on~$b$.
\end{theorem}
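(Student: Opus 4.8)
The plan is to treat the three ranges of $k$ separately and obtain matching upper and lower bounds in each. The two endpoint ranges $1\le k\le\log b$ and $k\ge b$ will follow from soft arguments, whereas the middle range $\log b\le k\le b$ is the substantial part, handled by an explicit covering construction for the upper bound and a combinatorial packing for the lower bound. Throughout, constants are allowed to depend on $p,q$ but not on $k,b$.

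First the endpoint ranges. Since $\|x\|_q\le\|x\|_p$ for $p\le q$, we have $e_1(\id)=\|\id:\ell_p^b\to\ell_q^b\|=1$, hence $e_k\le1$ for every $k$. For $k\le\log b$ the matching lower bound $e_k\gtrsim1$ follows because the $b$ canonical unit vectors lie in $B_{\ell_p^b}$ and are pairwise $2^{1/q}$-separated in $\ell_q^b$, so $H_{\varepsilon}(B_{\ell_p^b},\ell_q^b)\ge\log_2 b\ge\log b$ whenever $\varepsilon<2^{1/q}/2$. For $k\ge b$ I would argue by volume comparison: the lower bound from $N_\varepsilon(B_{\ell_p^b},\ell_q^b)\ge\vol(B_{\ell_p^b})/(\varepsilon^{b}\vol(B_{\ell_q^b}))$, and the upper bound from the standard fact that a maximal $\varepsilon$-separated subset of $B_{\ell_p^b}$ is already an $\varepsilon$-net, while its inflated, pairwise disjoint $\ell_q^b$-balls lie in $(1+c\,\varepsilon\,b^{1/p-1/q})B_{\ell_p^b}$, which gives $N_\varepsilon(B_{\ell_p^b},\ell_q^b)\le(Cb^{1/q-1/p}/\varepsilon)^{b}$ for $\varepsilon\le b^{1/q-1/p}$. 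Together with the classical estimate $(\vol(B_{\ell_p^b})/\vol(B_{\ell_q^b}))^{1/b}\simeq_{p,q}b^{1/q-1/p}$ (from the $\ell_p$-ball volume formula and Stirling) this yields $e_k\simeq_{p,q}2^{-k/b}b^{1/q-1/p}$ for $k\ge b$.

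For the upper bound in the middle range, fix $\log b\le k\le b$ and put $m:=\lfloor c\,k/\log(eb/k)\rfloor$ for a small absolute constant $c$, so $1\le m\le b/2$. I would decompose each $x\in B_{\ell_p^b}$ as $x=x_\Lambda+x_{\Lambda^c}$, where $\Lambda=\Lambda(x)$ collects the indices of the $m$ largest moduli of $x$; then $\|x_{\Lambda^c}\|_q\le m^{-(1/p-1/q)}$, since the $(m{+}1)$-st largest modulus is at most $m^{-1/p}$ and $\|x_{\Lambda^c}\|_p\le1$. The head $x_\Lambda$ is covered in two steps: first enumerate the $\binom bm$ possible supports, at cost $\log_2\binom bm\le m\log_2(eb/m)\le k/4$ for $c$ small; then, for each fixed support $\Lambda$, cover $B_{\ell_p^\Lambda}$ in $\ell_q^\Lambda$ at scale $\tfrac12 m^{-(1/p-1/q)}$ with at most $2^{m-1}$ balls (this is the $k\ge b$ volumetric bound applied in dimension $m$). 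Since $x_\Lambda-c$ and $x_{\Lambda^c}$ have disjoint supports, the resulting net has radius $\lesssim_{p,q}m^{-(1/p-1/q)}$ and at most $\binom bm\,2^{m-1}\le2^{k-1}$ centers, so $e_k\lesssim_{p,q}m^{-(1/p-1/q)}\simeq(\log(eb/k)/k)^{1/p-1/q}\simeq(\log(1+b/k)/k)^{1/p-1/q}$, the last equivalence because $\log(1+b/k)\simeq\log(eb/k)$ for $k\le b$.

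For the matching lower bound in the middle range, I would again take $m\simeq k/\log(eb/k)$ (now chosen slightly larger) and select a family $\mathcal F$ of $m$-subsets of $[b]$ with $|\Lambda\triangle\Lambda'|\ge m/2$ for distinct members; a Gilbert--Varshamov-type counting argument produces such an $\mathcal F$ with $\log_2|\mathcal F|\gtrsim m\log_2(eb/m)\ge k$, valid since $m\le b/6$. The vectors $m^{-1/p}\1_\Lambda$, $\Lambda\in\mathcal F$, lie in $B_{\ell_p^b}$ and satisfy $\|m^{-1/p}\1_\Lambda-m^{-1/p}\1_{\Lambda'}\|_q^q=m^{-q/p}|\Lambda\triangle\Lambda'|\ge\tfrac12 m^{\,1-q/p}$, so they form a $\tfrac12 m^{-(1/p-1/q)}$-packing of size $\ge2^k$ in $\ell_q^b$, whence $e_k\gtrsim_{p,q}m^{-(1/p-1/q)}\simeq(\log(1+b/k)/k)^{1/p-1/q}$; the few $k$ close to $b$, where the code construction degenerates, are covered by the $k\ge b$ regime since the two formulas agree at $k=b$. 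The main difficulty throughout is exactly this middle range: one must pick $m\simeq k/\log(eb/k)$ so that \emph{simultaneously} the support-encoding cost $\log_2\binom bm$ plus the per-support covering cost stays $\lesssim k$ (upper bound) and an exponentially large family of $m$-subsets with constant relative symmetric distance exists (lower bound); by comparison, the quasi-Banach bookkeeping — passing to equivalent $p$-norms and tracking the quasi-norm constant $\alpha_{\ell_q^b}$ — is routine and does not affect the exponents.
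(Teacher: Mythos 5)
Your proposal is correct and follows essentially the same route that the paper itself points to for this classical theorem (which it does not reprove but attributes in Remark~\ref{rem:schuett} to Sch\"utt, Edmunds--Triebel, Edmunds--Netrusov and K\"uhn): trivial and volumetric estimates in the outer ranges, the Edmunds--Triebel sparse-support (Stechkin-type) covering for the upper bound in the middle range, and a packing by sparse $0$--$1$ vectors supported on a Gilbert--Varshamov-type family of subsets, i.e.\ exactly the combinatorial fact recalled in Remark~\ref{rem:suboptimal_combinatorics}\,(ii), for the lower bound. Apart from harmless constant bookkeeping (the per-support covering radius is $C_{p,q}\,m^{-(1/p-1/q)}$ rather than $\tfrac12 m^{-(1/p-1/q)}$, and the degenerate cases $m=0$ near $k\simeq\log b$ and $k\simeq b$ are handled by monotonicity, as you note), the argument is sound.
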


\begin{remark}
Note that $e_k(\id: \ell_\infty^b \to \ell_\infty^b) = 1$ as long as $k \leq b$ because $\|x - y\|_\infty = 2$ for different $x,y \in \{-1,1\}^b$. 
\end{remark}

\begin{remark}\label{rem:schuett}
In 1984, Sch\"utt~\cite{schuett:1984:entropy} gave a proof for the general case of symmetric Banach spaces, which implies Theorem~\ref{lem:schuett} if~$1\leq p \leq q \leq \infty$. In the range~$1 \leq k \leq b$, we upper bound was first proved for all~$0<p \leq q \leq \infty$ by Edmunds and Triebel~\cite{edmunds/triebel:1996:function_spaces} in 1996 by covering the unit ball using suitable sparse vectors. Edmunds and Netrusov~\cite[Thm. 2]{edmunds/netrusov:1998:entropy} generalized this covering construction in 1998 to arbitrary quasi-Banach spaces. In the same paper, Edmunds and Netrusov also proved matching lower bounds for general quasi-Banach spaces~\cite[Thm. 2]{edmunds/netrusov:1998:entropy}. K\"uhn~\cite{kuehn:2001:lower_estimate} also proved the lower bound for~$e_k(\id: \ell_p^b \to \ell_q^b)$ with~$0<p\leq q \leq \infty$ in~2001. Both~\cite[Thm. 2]{edmunds/netrusov:1998:entropy} and~\cite{kuehn:2001:lower_estimate} rely on the very same idea to pack the unit ball with sparse vectors and use the fundamental combinatorial fact discussed in Remark~\ref{rem:suboptimal_combinatorics} (ii) below.
In~2000, Gu\'edon and Litvak~\cite[Thm. 6]{guedon/litvak:2000:euclidean_projections} provided an alternative proof of Theorem~\ref{lem:schuett} that relies completely on interpolation arguments and improved the constants in the upper bound.
\end{remark}

\begin{remark}\label{rem:notions_of_entropy}
The concept of metric entropy for compact sets has been introduced independently by Kolmogorov~\cite{kolmogorov:1956:entropy} and Pontrjagin and Schnirelmann~\cite{pontrjagin/schnirelmann:1932:entropy}. It should not be confused with the metric entropy of a dynamical system, which also has been introduced by Kolmogorov~\cite{kolmogorov:1956:entropy_dynamical_system}. The latter entropy is also called \emph{Kolmogorov-Sinai entropy} or \emph{measure-theoretic entropy}. However, these two notions of metric entropy are related~\cite{akashi:1986:entropy}. There is also a deep connection between Kolmogorov-Sinai entropy and the notions of information entropy and thermodynamic entropy~\cite{billingsley:1965:ergodic_theory}.
\end{remark}
\section{Edmunds-Netrusov revisited}
\label{sec:edne_notes}

In addition to Sch\"utt's theorem, the main tool that we employ in this work is a powerful result by Edmunds and Netrusov~\cite{edmunds/netrusov:2014:schuett}. They prove a generalization of Sch\"utt's theorem for vector-valued sequence spaces. Let us restate the part of their result that is relevant for us.

\begin{theorem}[Theorems 3.1 and 3.2 in \cite{edmunds/netrusov:2014:schuett}]\label{res:edne}
Let $b \in \N$ such that $b \geq 2$, $0<p\leq r\leq \infty$ and let $X$ and $Y$ be~$\gamma$-normed quasi-Banach spaces. For $k,m \in \N$ such that $m \leq k$, let
\[ 
 D(m,k) = \max_{\ell \in \N, m \leq \ell \leq k} (\ell/k)^{1/p-1/r} e_\ell(\id:X \to Y),
\]
and
\[ 
 A(k,b) = \max\left\lbrace \|\id: X \to Y\|\left(\frac{\log(eb/k)}{k}\right)^{1/p-1/r}, D(1,k) \right\rbrace.
\]
For $k \geq \log_2(b)$, we have the following.
\begin{enumerate}[label=(\roman*)]
\item If $k \leq b$, then
\[ 
 e_k(\id:\ell_p^b(X) \to \ell_r^b(Y)) \simeq A(k,b).
\]
\item If $k \geq b$, then there are absolute constants $c_1, c_2$ such that
\[ 
  D(c_1 k/b, k) \lesssim e_k(\id:\ell_p^b(X) \to \ell_r^b(Y)) \lesssim  D(c_2 k/b,k).
\]
\end{enumerate}
\end{theorem}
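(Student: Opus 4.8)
We do not reprove this result from scratch --- it is \cite[Theorems 3.1 and 3.2]{edmunds/netrusov:2014:schuett} --- but we sketch the plan of the argument, whose details occupy the remainder of this section. Both the upper and the lower bound split the analysis along the outer index set $[b]$ into a \emph{combinatorial} part (how many fibres carry most of the mass, and how to encode which ones they are) and an \emph{inner} part (how finely $B_X$ is resolved in $Y$, i.e.\ the entropy numbers $e_\ell(\id:X\to Y)$). The functions $D(m,k)$ and $A(k,b)$ are exactly the combinations of the two that survive the optimisation over how the entropy budget $k-1$ is distributed between them. Throughout, the implied constants depend only on $p$, $r$ and $\gamma$ (via $\|\sum_i z_i\|^{\gamma}\le\sum_i\|z_i\|^{\gamma}$), and not on $k$, $b$, $X$ or $Y$.

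\emph{Upper bound --- the dyadic-grid covering.} The plan is to fix $x=(x_1,\dots,x_b)\in B_{\ell_p^b(X)}$ and to group the outer indices by the dyadic size of the fibres: for $j\in\N_0$ put $A_j=\{i\in[b]:2^{-j-1}<\|x_i\|_X\le 2^{-j}\}$, so that $\sum_i\|x_i\|_X^p\le 1$ forces $|A_j|\le 2^{(j+1)p}=:t_j$. One fixes a cut-off level $J$ and natural numbers $(m_j)_{0\le j\le J}$ and approximates $x$ by: discarding all fibres with $\|x_i\|_X\le 2^{-J-1}$; recording, for each $j\le J$, the set $A_j$ at a cost of $\lesssim\min\{t_j\log(eb/t_j),b\}$ bits; and replacing, for each $i\in A_j$, the fibre $x_i$ by a centre of a minimal net of $2^{-j}B_X$ in $Y$ of radius $2^{-j}e_{m_j}(\id:X\to Y)$, at $\le m_j$ bits per fibre. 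This yields a net of $B_{\ell_p^b(X)}$ in $\ell_r^b(Y)$ of cardinality at most $2^{k-1}$ as soon as $\sum_{j\le J}\big(\min\{t_j\log(eb/t_j),b\}+t_jm_j\big)\lesssim k$, and of radius at most a constant times
\[
 \Big(\sum_{j=0}^{J}2^{-j(r-p)}\,e_{m_j}(\id:X\to Y)^r\Big)^{1/r}
 \;+\;\|\id:X\to Y\|\;2^{-(J+1)(r-p)/r}.
\]
The remaining task is to optimise $J$ and $(m_j)$. For $k\le b$ the combinatorial cost caps the number of resolved fibres at $\simeq k/\log(eb/k)$, which turns the tail term into $\simeq\|\id:X\to Y\|(\log(eb/k)/k)^{1/p-1/r}$; a constant fraction of the budget spent on the inner resolution of a single level, its $t_j$ fibres each receiving $m_j\simeq k/t_j$ bits, makes the corresponding summand $\simeq(m_j/k)^{1/p-1/r}e_{m_j}(\id:X\to Y)$, and the admissible range $1\le m_j\le k$ produces $D(1,k)$; together this is $\lesssim A(k,b)$. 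For $k\ge b$ one can afford to record all $b$ fibres and push $J$ far enough to make the tail negligible, and the budget $\sum_j t_jm_j\lesssim k$ with $\sum_j t_j\simeq b$ allocates $\simeq k/b$ bits to each fibre, so the same substitution now obeys $m_j\ge k/b$, giving $\lesssim D(c_2k/b,k)$.

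\emph{Lower bound --- packings and volume.} For the matching lower bound the plan is to combine three mechanisms: (i) placing $B_X$ isometrically into a single fibre, which gives $e_k(\id:\ell_p^b(X)\to\ell_r^b(Y))\ge e_k(\id:X\to Y)$, together with composing the scalar Sch\"utt packing of $B_{\ell_p^b}$ in $\ell_r^b$ (Theorem~\ref{lem:schuett}) with a vector $v\in B_X$ of $Y$-norm $\ge\tfrac12\|\id:X\to Y\|$, which yields the term $\|\id:X\to Y\|(\log(eb/k)/k)^{1/p-1/r}$ when $\log_2b\le k\le b$; (ii) for the intermediate values $(m/k)^{1/p-1/r}e_m(\id:X\to Y)$, packing by sparse vectors in the spirit of \cite{edmunds/triebel:1996:function_spaces,edmunds/netrusov:1998:entropy,kuehn:2001:lower_estimate}, choosing which $\ell\simeq k/m$ of the $b$ fibres are active along a Varshamov--Gilbert constant-weight code (min symmetric difference $\gtrsim\ell$) and filling each active fibre with $\ell^{-1/p}$ times an element of a near-optimal $e_m(\id:X\to Y)$-packing of $B_X$ in $Y$ (normalised, after a translation and rescaling, so that all but one of its $\ge 2^{m-2}$ elements have $Y$-norm $\gtrsim e_m(\id:X\to Y)$), so that any two of the $\ge 2^{k-1}$ resulting points of $B_{\ell_p^b(X)}$ differ on $\gtrsim\ell$ fibres and are hence $\gtrsim\ell^{1/r-1/p}e_m(\id:X\to Y)$ apart in $\ell_r^b(Y)$; and (iii) for $k\gtrsim b$ (in particular $k\gg b\log b$, where the sparse packings run out of points), restricting $X$ and $Y$ to finite-dimensional subspaces and invoking the classical relation between entropy numbers and volume ratios, which carries the volumetric lower bound behind Sch\"utt's theorem over to the vector-valued setting. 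Combined with the monotonicity $e_k(\id:\ell_p^{b'}(X)\to\ell_r^{b'}(Y))\le e_k(\id:\ell_p^b(X)\to\ell_r^b(Y))$ for $b'\le b$, these should reproduce $\gtrsim A(k,b)$ for $k\le b$ and $\gtrsim D(c_1k/b,k)$ for $k\ge b$.

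\emph{The main obstacle.} Once these constructions are set up they are, individually, fairly routine; the genuine difficulty --- and the place where \cite{edmunds/netrusov:2014:schuett} is hard to follow --- is the bookkeeping that matches the covering and packing bounds \emph{precisely} to $A(k,b)$ and, for $k\ge b$, to the absolute constants $c_1,c_2$ inside $D(\cdot,k)$: one has to control the trade-off between the cut-off $J$ and the per-fibre inner budgets $m_j$, assemble the lower-bound mechanisms so that the supremum over the sparsity reproduces all of $D(1,k)$ and not merely its sparse part, handle the transition region $k\simeq b$ where ``resolve only the heavy fibres'' switches to ``resolve all fibres'', and check at each step that no constant secretly acquires a dependence on $k$ or $b$. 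We elaborate on these points in the rest of this section.
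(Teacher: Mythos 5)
Your upper-bound plan is essentially the paper's own reconstruction of Edmunds--Netrusov: stratifying the fibres of $x$ by dyadic size and encoding the level sets $A_j$ is exactly the information carried by the grid point $v=\upsilon(x)$ in Lemma~\ref{chapter:entropy:lem:dyadic_grid} (the sets $B(v,k)$ in its proof are your $A_j$, and the counting $\sharp\Gamma(b)\le 2^{3b}$ is your encoding cost in the regime $k\gtrsim b$), followed by products of nets of $B_X$ in $Y$ as in Proposition~\ref{lem:ub_by_dyadic_covering}; your sparse encoding cost $\min\{t_j\log(eb/t_j),b\}$ corresponds to the extra row-selection step of Proposition~\ref{prop_small_k} for $\log_2 b\le k\le b$. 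Likewise your lower-bound mechanisms (i) and (ii) are the two packings of Remark~\ref{rem:suboptimal_combinatorics}: the Gilbert--Varshamov product packing filling active fibres with elements of a near-optimal packing of $B_X$ in $Y$, and the row-sparse packing built from a single $x\in B_X$ with $\|x\|_Y\ge\frac12\|\id\|$ over supports with small pairwise intersections.

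The genuine gap is your third lower-bound mechanism. For $k\gtrsim b$ you propose to ``restrict $X$ and $Y$ to finite-dimensional subspaces and invoke the classical relation between entropy numbers and volume ratios''. This is both unnecessary and unworkable in the setting of Theorem~\ref{res:edne}: $X$ and $Y$ are arbitrary $\gamma$-normed quasi-Banach spaces, and there is no usable link between $e_\ell(\id:X\to Y)$ and volumes of finite-dimensional sections; indeed the point emphasized in Section~\ref{sec:edne} is that the Edmunds--Netrusov argument avoids volumetric reasoning altogether (volumes enter only later, in the concrete case $X=\ell_q^d$, $Y=\ell_u^d$, and only for $k\ge bd$). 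Moreover your premise that the sparse packings ``run out of points'' for $k\gg b\log b$ is a carry-over from the scalar Sch\"utt situation that does not apply here: in mechanism (i) each of the $\simeq b$ active fibres is filled from an $e_m(\id:X\to Y)$-packing of $B_X$ in $Y$ containing $\simeq 2^{m}$ elements, so choosing $m\simeq k/b$ (i.e.\ $s=k/m\le b$) supplies $\simeq 2^{k}$ points and yields precisely $e_k\gtrsim\max_{k/b\le m\le k}(m/k)^{1/p-1/r}e_{4m+6}(\id:X\to Y)\simeq D(c_1k/b,k)$, which is all that part (ii) of the theorem requires (note the $\|\id\|(\log(eb/k)/k)^{1/p-1/r}$ term does not even appear there). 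So you should delete mechanism (iii) and let mechanism (i), with the dimension $s$ optimized as in Remark~\ref{rem:suboptimal_combinatorics}, cover the whole range $k\ge b$.
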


\noindent Theorem~\ref{res:edne} states abstract lower and upper bounds that are ``matching'' in the sense that both have the same functional form. At first glance, this functional form is not obvious to expect and not easy to interpret. In addition, we found it difficult to follow the arguments in~\cite{edmunds/netrusov:2014:schuett} at several points due to its succinct style of presentation. We thus believe that it is of value to review their key arguments and to provide some additional material that makes Theorem~\ref{res:edne} more comprehensible. This is the subject of the remainder of this section. The reader who is only interested in applications of Theorem~\ref{res:edne} may proceed directly to Section~\ref{sec:matching_bounds}. 

\begin{remark}
Theorems 3.1 and 3.2 in \cite{edmunds/netrusov:2014:schuett} are only stated for $0<p<r\leq \infty$. However, these theorems also hold true for $p=r$. First note that in the latter case, we have
\[ 
 D(m,k) = e_m(\id: X \to Y), \qquad A(k,b) = \|\id: X \to Y\|.
\]
Now for $k \geq b$, Theorem~\ref{res:edne} has been proved in \cite[Thm. 4.3]{mayer:2018:thesis}. For $k \leq b$, the lower bound in Theorem~\ref{res:edne} is a consequence of \cite[Thm. 4.3]{mayer:2018:thesis} in combination with arguments analogous to Remark~\ref{rem:suboptimal_combinatorics}; the upper bound is trivial.
\end{remark}

\subsection{A special case to begin with}\label{sec:mixed:q=u}

If $p=r=\infty$ it is  clear that one simply has to take $b$-fold Cartesian products of the optimal covering and packing of $B_X$ in~$Y$ to obtain the bounds
$$
 \frac{1}{2} \, e_{k+1}(\id:X \to Y)\leq e_{kb}(\id:\ell_\infty^b(X) \to \ell_\infty^b(Y)) \leq e_k(\id:X \to Y), \qquad k \in \N.
$$
In any other case, simple Cartesian products will not be good enough. 

The special case of equal inner spaces $X=Y$ also allows for a rather straightforward solution if the dimension of the inner space is finite. For an easier understanding of the contribution in \cite{edmunds/netrusov:2014:schuett}, see Theorem \ref{res:edne} above, we find it instructive to give a direct proof of this special case and point out its limitations. Indeed, a straightforward generalization of the well-known Edmunds-Triebel covering construction~\cite{edmunds/triebel:1996:function_spaces} based on volume arguments will do the job to establish the optimal upper bound. Recall that the essence of this covering construction is a result from best $s$-term approximation, sometimes referred to as Stechkin's inequality, see \cite[Sect.\ 7.4]{dung/ullrich/temlyakov:2015:hyperbolic_cross}, which yields a $s^{-1/p+1/r}$-covering of $B_{\ell_p^b}$ in $\ell_r^b$ using only $s$-sparse vectors. We simply have to extend this approach to row-sparse matrices. To improve readability, we will omit some technical details in the following proof. 

\begin{proposition}\label{res:generalization_Edmunds_Triebel}
Let $0<p \leq r \leq \infty$ and $X$ be $\R^d$ (quasi-)normed with $\|\cdot\|_X$. Let further  $b,d \in \N$ and $d>5$. Then, for $1 \leq k \leq bd$,
\begin{align*}
	 e_k(\id : \ell_p^b(X) \to  \ell_r^b(X)) \lesssim_{p,r}
     \begin{cases}
	 1&: 1\leq k\leq \max\{\log(b),d\},\\	
     \left[\frac{\log(eb/k)+d}{k}\right]^{1/p - 1/r} &:  \max\{\log(b),d\} \leq k \leq bd,\\
     b^{-(1/p-1/r)}2^{-(k-1)/(bd)} &:k\ \geq bd.
     \end{cases}
\end{align*}
\end{proposition}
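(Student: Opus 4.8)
The plan is to establish the three cases separately, handling the first and last by routine volumetric estimates and concentrating the real work on the middle range $\max\{\log b,d\}\le k\le bd$, which we tackle by generalising the Edmunds--Triebel covering of $B_{\ell_p^b}$ in $\ell_r^b$ to \emph{row-sparse matrices}. For the last case $k\ge bd$: the space $\ell_p^b(X)\cong\R^{bd}$ is finite-dimensional of dimension $bd$, and a standard volume-ratio argument (cf.\ the proof of Sch\"utt's theorem, Theorem~\ref{lem:schuett}, for $k\ge b$) gives $e_k(\id)\lesssim_{p,r}\|\id:\ell_p^b(X)\to\ell_r^b(X)\|\,2^{-(k-1)/(bd)}$, and the operator norm is $\lesssim_{p,r} b^{-(1/p-1/r)_+}=b^{-(1/p-1/r)}$ since $p\le r$ (here one uses $\|\cdot\|_X$ is the same on both sides, so the inner contribution cancels and only the $\ell_p^b\to\ell_r^b$ norm matters up to constants depending on the fixed equivalence constants for $\|\cdot\|_X$). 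For $1\le k\le\max\{\log b,d\}$ the bound is just $e_k(\id)\le\|\id\|\lesssim 1$ after normalisation, using monotonicity of $e_k$.

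The heart of the argument is the middle range. Recall Stechkin's inequality / best $s$-term approximation: every $x\in B_{\ell_p^b}$ can be approximated in $\ell_r^b$ to accuracy $\lesssim_{p,r} s^{-(1/p-1/r)}$ by an $s$-sparse vector, and the set of supports has cardinality $\binom{b}{s}\le (eb/s)^s$. I would lift this to matrices $x\in B_{\ell_p^b(X)}$: since $\big(\|x_{i,\cdot}\|_X\big)_{i=1}^b\in B_{\ell_p^b}$, there is a set $I\subset[b]$ with $|I|\le s$ such that the row-truncated matrix $x_I$ (keeping rows in $I$, zeroing the rest) satisfies $\|x-x_I\|_{\ell_r^b(X)}\lesssim_{p,r}s^{-(1/p-1/r)}$. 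The number of such row-supports is again $\le(eb/s)^s$. For each fixed support $I$, the remaining object $x_I$ lives in an at most $sd$-dimensional subspace, inside a ball of radius $O(1)$ in the corresponding $\ell_p^s(X)$-norm; covering this ball in $\ell_r^s(X)$ at scale $\delta$ costs $\le (C/\delta)^{sd}$ points by the standard finite-dimensional volume estimate (the constant $C$ absorbing the fixed equivalence constants of $\|\cdot\|_X$ and the $p\le r$ relation). Choosing $s\simeq k/(\log(eb/k)+d)$ and $\delta\simeq s^{-(1/p-1/r)}$ balances the two sources of points: the total count is $\exp\!\big(O(s\log(eb/s)+sd)\big)=\exp\!\big(O(k)\big)$, hence $H_\varepsilon\lesssim k$ at scale $\varepsilon\simeq s^{-(1/p-1/r)}\simeq\big[(\log(eb/k)+d)/k\big]^{1/p-1/r}$, which after rescaling $k\mapsto ck$ (allowed since $e_k$ is decreasing and we only claim a $\lesssim_{p,r}$ bound) gives exactly the asserted estimate. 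One should double-check the regime boundaries: $k\ge d$ guarantees $s\ge 1$, and $k\le bd$ guarantees $s\le b$, so the construction is nonvacuous precisely on the stated interval; the quasi-norm triangle inequality (the $\vartheta$-norm estimate \eqref{ek(T1+T2)}) is used to combine the row-truncation error with the covering error, contributing only a constant depending on $p,r$ (and the fixed $\vartheta$ of $\ell_r^b(X)$).

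The main obstacle I anticipate is the careful bookkeeping in the volume estimate for the fixed-support piece: one must verify that covering $B_{\ell_p^s(X)}$ in $\ell_r^s(X)$ at scale comparable to the Stechkin error really costs only $e^{O(sd)}$ points uniformly in $s$, with the constant independent of $s,b,d$ and depending on $X$ only through its fixed quasi-norm data. This is where the dimension $d$ of the inner space enters linearly in the exponent (hence the $+d$ in the bound), and it is also the step that fundamentally limits this elementary approach: it is wasteful when $d$ is large or infinite, because the crude volume covering of the inner ball ignores any compactness structure of $\id:X\to X$ beyond finite-dimensionality --- exactly the deficiency that the Edmunds--Netrusov construction of Theorem~\ref{res:edne} repairs. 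A secondary technical point is that the $s$-sparse covering of $B_{\ell_p^b}$ stated in the literature is usually for $p\le r$ real-valued sequences; I would either cite \cite[Sect.\ 7.4]{dung/ullrich/temlyakov:2015:hyperbolic_cross} directly or reprove the one-line estimate $\|x-x_I\|_{\ell_r^b}^{r}\le\sum_{i\notin I}\|x_{i,\cdot}\|_X^{r}\lesssim_{p,r} s^{-(r/p-1)}$ via the standard rearrangement bound $a_{s+1}\le s^{-1/p}\|a\|_{\ell_p}$ for decreasing $a$, and the monotone-$\ell_r$-tail bound for $r<\infty$ (with the obvious modification for $r=\infty$). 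Beyond these points the proof is, as the authors note, a matter of omitting the routine constants.
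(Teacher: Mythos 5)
Your outline follows the same route as the paper's proof (row-sparse splitting via Stechkin, a covering of each fixed-support piece, the $\binom{b}{s}$ support count, and the choice $s\simeq k/(\log(eb/k)+d)$), but there is a genuine gap at exactly the step you flag as the crux, and your proposed justification for it fails. Covering $B_{\ell_p^s(X)}$ in the $\ell_r^s(X)$-metric at scale $\delta\simeq s^{-(1/p-1/r)}$ by the ``standard'' estimate $(C/\delta)^{sd}$ gives $\exp\bigl(sd\,[(1/p-1/r)\log s+O(1)]\bigr)$ points, i.e.\ an extra $\log s$ in the exponent, \emph{not} $\exp(O(sd))$. The statement you actually need --- that $2^{c_{p,r}sd}$ balls of radius $\lesssim s^{-(1/p-1/r)}$ suffice --- is equivalent to the volume-ratio estimate $\bigl(\vol(B_{\ell_p^s(X)})/\vol(B_{\ell_r^s(X)})\bigr)^{1/(sd)}\simeq_{p,r}s^{-(1/p-1/r)}$, which is what the paper imports from the Kempka--Vyb\'iral formula \eqref{volume} (see \cite{KeVy17}) in \eqref{eq:bottleneck}: the gain $s^{-(1/p-1/r)}$ comes from the volume ratio of the two mixed-norm balls, not from shrinking $\delta$ in a $(C/\delta)^{n}$ count. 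With your count the optimization forces $s\simeq k/(\log(eb/k)+d\log s)$ and the final bound deteriorates by a logarithm throughout $d\le k\le bd$; for instance at $k= bd$ you would only get $\bigl(\log b/b\bigr)^{1/p-1/r}$ instead of $b^{-(1/p-1/r)}$, destroying precisely the log-free behavior the proposition (and its application to small smoothness) is designed to capture.

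The same missing volume input also invalidates your treatment of $k\ge bd$: the operator norm of $\id:\ell_p^b(X)\to\ell_r^b(X)$ is exactly $1$ (test a matrix with a single nonzero row), not $\lesssim b^{-(1/p-1/r)}$, and factoring through the operator norm can only give $e_k\lesssim 2^{-(k-1)/(bd)}$. The factor $b^{-(1/p-1/r)}$ again comes from $\bigl(\vol(B_{\ell_p^b(X)})/\vol(B_{\ell_r^b(X)})\bigr)^{1/(bd)}\simeq_{p,r}b^{-(1/p-1/r)}$, obtained from \eqref{volume} and Stirling as in \eqref{vol_arg}; alternatively, once the middle range is established correctly, the case $k\ge bd$ follows by composing the bound at $k\simeq bd$ with $e_j(\id:\ell_r^b(X)\to\ell_r^b(X))\lesssim 2^{-j/(bd)}$ via \eqref{ek(RS)}. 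The remaining ingredients of your sketch (the Stechkin truncation of the $s$ largest rows, the $(eb/s)^s$ support count, the quasi-triangle inequality to combine the two error sources) coincide with the paper's argument and are fine.
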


\begin{proof}
The first case is trivial. The last case follows from volumetric arguments using the recent findings in Section \cite[Sect.\ 3.2]{KeVy17}. By these we know that 
\begin{equation}\label{volume}
	\vol(B_{\ell_p^b(X)})^{1/(bd)} = \frac{\Gamma(1+d/p)^{1/d}}{\Gamma(1+db/p)^{1/(db)}}\cdot \vol(B_X)^{1/d}\,.
\end{equation}
and for $\vol(B_{\ell_r^b(X)})^{1/(bd)}$ accordingly. For $k>bd$ we use the standard volume argument to obtain 
\begin{equation}\label{vol_arg}
	e_k \lesssim \Big[\frac{\vol(B_{\ell_r^d(X)})}{\vol(B_{\ell_p^d(X)})}\Big]^{1/(db)}2^{-(k-1)/bd} \simeq_{p,r} 
	b^{-(1/p-1/r)}2^{-(k-1)/(bd)}\,.
\end{equation}
For the second case let $s \in [b]$. Clearly, we have that
\begin{align}\label{eq:union_of_sparse} 
 B_{\ell_p(X)} \subseteq \bigcup_{I \subseteq [b]: \sharp I = s} B_I,
\end{align}
where $ B_I := \{ x \in B_{\mixedell{p}{q}} : \|x_{i\cdot}\|_X \geq \|x_{k\cdot}\|_X \text{ for } i \in I, k\in [b]\setminus I\}$. When we replace the~$s$ rows with the largest $\|\cdot\|_X$-(quasi-)norm by $0$ in $x \in B_I$, then the resulting matrix has a~$\ell_r^b(X)$-(quasi-)norm of at most $s^{-(1/p-1/r)}$, which follows from a well-known relation for best~$s$-term approximation in $\ell_r$. Hence, if we wish to cover the set~$B_I$ by balls of radius~$\varepsilon \simeq s^{-(1/p-1/r)}$, it suffices to take care of the $s$ largest components of the vectors in~$B_I$. That is, we take a suitable covering of $B_{\ell_p^s(X)}$ in~$\ell_r^s(X)$ and append $b-s$ zero rows to every matrix of the covering. A similar volumetric argument as above in \eqref{volume}, \eqref{vol_arg} tells us that
\begin{align}\label{eq:bottleneck} 
 e_{1+c_{p,q}sd}(\id:\ell_p^s(X) \to \ell_r^s(X) ) \lesssim_{p,r} \Big[\frac{\vol(B_{\ell_r^s(X)})}{\vol(B_{\ell_p^s(X)})}\Big]^{1/(ds)} \lesssim_{p,r} 
 		s^{-(1/p-1/r)}\,,
\end{align}
so that we obtain a covering of $B_I$ with cardinality~$2^{c_{p,q}sd}$.

Combining the coverings for all possible index sets~$I$ yields an $\varepsilon$-covering $U$ of $B_{\ell_p^b(X)}$ in~$B_{\ell_r^b(X)}$, where $\varepsilon \simeq s^{-1/p+1/r}$, with cardinality
\[
 \sharp U = {b \choose s} 2^{c_{p,q}sd}.
\]
Now, given $k \in [bd]$, we choose 
\begin{align*} 
s \simeq \frac{k}{\log(eb/k)+d}
\end{align*}
such that
\begin{align*} 
\log(\sharp U)  \lesssim s (\log(eb/s)+d) \leq k-1
\end{align*}
is assured. Consequently, we obtain the upper bound
\begin{align*} 
 e_k(\id:\ell_p^b(X) \to \ell_r^b(X)) \lesssim s^{-1/p+1/r} \lesssim \left(\frac{\log(eb/k)+d}{k}\right)^{1/p-1/r}.
\end{align*}
\end{proof}

\begin{remark}\label{remprop5} One way to obtain the matching lower bound in the case $X = Y$ is to generalize the proof idea underlying Sch\"utt's theorem (Theorem~\ref{lem:schuett}) in the case that $\log(b) \leq k \leq b$. However, the standard combinatorial lemma is not sufficient here. A suitable packing to do this generalization has already been considered in~\cite[Prop. 5.3]{dirksen/ullrich:2017:gelfand}. See also Remark \ref{rem:suboptimal_combinatorics} below. 
\end{remark}


\subsection{The covering construction by Edmunds and Netrusov}\label{sec:edne}

The generalized Edmunds-Triebel covering is optimal for finite dimensional $X=Y$, see Proposition~\ref{res:generalization_Edmunds_Triebel} in the previous section. In the general situation, where $X$ is compactly embedded into $Y$, it seems that the volumetric arguments underlying~\eqref{eq:bottleneck} are too coarse to obtain sharp estimates (at least in the finite dimensional situation). The main contribution of~\cite{edmunds/netrusov:2014:schuett} is a covering construction which resolves this shortcoming by not using volumetric arguments at all. In particular, $X$ and $Y$ do not have to be finite dimensional. We give a detailed recapitulation of their idea in this section. For some comments concerning the lower bound in Theorem~\ref{res:edne}, see Remark~\ref{rem:suboptimal_combinatorics} at the end of this section.

The covering in~\cite{edmunds/netrusov:2014:schuett} works in the very general situation where we are given quasi-Banach spaces $X_1,\dots, X_b$ and $Y_1,\dots, Y_b$, see Proposition \ref{lem:ub_by_dyadic_covering} below. The basic idea is to cover the unit ball $B_{\ell_p(\{X_i\}_{i=1}^b)}$ by $N$ cuboids
\begin{align}\label{eq:cuboids} 
 U(v^i) = v_1^i B_{X_1}  \times \cdots \times v_b^i B_{X_b}, 
\end{align}
where $v^1,\dots,v^N \in \R_+^b$ and~$N$ is exponential in~$b$ (think of each cuboid as an anisotropically rescaled version of $B_{\ell_\infty(\{X_i\}_{i=1}^b)}$). The crux is to find suitable vectors $v^i$ such that an optimal covering can be reached by covering the cuboid~$U(v^i)$ using a product of optimal coverings of $B_{X_1}$,\dots,$B_{X_b}$. Edmunds and Netrusov~\cite{edmunds/netrusov:2014:schuett} had the idea to consider vectors that form a \emph{dyadic grid} derived from the simplex
$$
 S(b) = \Big\lbrace x \in [0,1]^b: \sum_{i=1}^b x_i \leq 1 \Big\rbrace, \qquad b \in \N.
$$
The dyadic grid is constructed with the help of the following mapping.
Let
$$\upsilon_0: \R_+ \to \{2^k: k \in \N_0\}, \; x \mapsto 2^{\max\{0,\lceil \log_2(x) \rceil \}},$$
and for $x \in [0,1]^b$, put
$$\upsilon(x) := b^{-1} (\upsilon_0(bx_1),\dots, \upsilon_0(bx_b)).$$
This mapping $\upsilon$ leads to a finite grid with the following properties.

\begin{lemma}[Simplification of Lemma 2.2 in \cite{edmunds/netrusov:2014:schuett}]\label{chapter:entropy:lem:dyadic_grid}
For $b \in \N$, let $\Gamma(b) = \upsilon(S(b))$. The set~$\Gamma(b)$ has the following properties.
\begin{enumerate}[label=(\roman*)]
\item For all $u \in S(b)$, there is $v \in \Gamma(b)$ such that $u_i \leq v_i$ for all $i \in [b]$.
\item For all $v \in \Gamma(b)$, we have $\|v\|_1 \leq 2$.
\item For all $v \in \Gamma(b)$, we have $bv_i \in \N$ for each $i \in [b]$.
\item We have $\sharp \Gamma(b) \leq 2^{3b}$.
\end{enumerate}
\end{lemma}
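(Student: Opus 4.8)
The plan is to prove the four items directly from the definitions of~$\upsilon_0$ and~$\upsilon$; the only inputs needed are the elementary bounds $y\le\upsilon_0(y)\le\max\{1,2y\}$ for $y\ge 0$ (with the convention $\upsilon_0(0):=1$). The left inequality holds since $2^{\max\{0,\lceil\log_2 y\rceil\}}\ge 2^{\lceil\log_2 y\rceil}\ge 2^{\log_2 y}=y$ and also $\ge 2^0=1$; the right one is clear for $y\le 1$ (where $\upsilon_0(y)=1$) and follows from $\lceil\log_2 y\rceil<\log_2 y+1$ when $y>1$. Items~(i) and~(iii) then drop out at once. For~(i), given $u\in S(b)$ I take $v:=\upsilon(u)$, which lies in $\upsilon(S(b))=\Gamma(b)$ by definition, and note $v_i=b^{-1}\upsilon_0(bu_i)\ge b^{-1}(bu_i)=u_i$ for every $i\in[b]$. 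For~(iii), every $v\in\Gamma(b)$ equals $\upsilon(x)$ for some $x\in S(b)$, so $bv_i=\upsilon_0(bx_i)$ is~$2$ raised to a non-negative integer power, hence a positive integer.

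For~(ii), I write $v=\upsilon(x)$ with $x\in S(b)$ and split the index set into the ``small'' indices $J:=\{i\in[b]:bx_i\le 1\}$ and the ``large'' indices $I:=[b]\setminus J$. Each small index contributes $\upsilon_0(bx_i)=1$, and each large index contributes $\upsilon_0(bx_i)<2bx_i$; summing and using $\sum_{i\in I}x_i\le\sum_{i=1}^b x_i\le 1$ gives
\[
 b\,\|v\|_1=\sum_{i=1}^b\upsilon_0(bx_i)\le\#J+2b\sum_{i\in I}x_i\le b+2b,
\]
so $\|v\|_1$ is bounded by an absolute constant. Then for~(iv) I encode each $v\in\Gamma(b)$ by the tuple $(bv_1,\dots,bv_b)\in\N^b$, which determines~$v$ and, by~(ii) and~(iii), consists of positive integers with sum at most~$3b$. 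The number of $b$-tuples of positive integers summing to at most~$N$ is $\binom{N}{b}$ by a stars-and-bars count, so taking $N=3b$ and using $\binom{N}{b}\le\sum_{j=0}^N\binom{N}{j}=2^{N}$ yields $\#\Gamma(b)\le\binom{3b}{b}\le 2^{3b}$.

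The only place that needs a little care is the constant in~(ii): the crude split above gives $\|v\|_1<3$ rather than the literally stated~$2$, the discrepancy coming from the cap $\max\{0,\cdot\}$ in~$\upsilon_0$ (which is exactly what makes~(iii) hold). Since only an absolute constant in~(ii) is used in the covering construction that follows, this is harmless; apart from this, everything is a direct computation with the definitions.
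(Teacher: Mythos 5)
Your argument is correct, and it differs from the paper's in two respects worth recording. For (iv) the paper does not use the $\ell_1$-bound of (ii) at all: it counts the level sets $B(v,k)=\{i\in[b]:v_i=2^k/b\}$, using the distributional estimate $\sharp\{i:v_i\geq t\}\leq 2/t$ (a direct consequence of $\sum_i x_i\leq 1$), so that $B(v,0),B(v,1)$ admit at most $2^b$ choices each and $B(v,k)$ at most $2^{b2^{1-k}}$ choices for $k\geq 2$, giving $2^b\cdot 2^b\cdot\prod_{k\geq 2}2^{b2^{1-k}}=2^{3b}$. Your stars-and-bars count of the integer vectors $(bv_1,\dots,bv_b)$ with sum at most $3b$, giving $\binom{3b}{b}\leq 2^{3b}$, is simpler; the paper's level-set count has the advantage that it is completely independent of the constant in (ii). Items (i) and (iii) are the same immediate verifications in both proofs.

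Concerning your caveat on (ii): the discrepancy is not a defect of your ``crude split''. With $\upsilon$ as defined, the bound $\|v\|_1\leq 2$ is in fact false, precisely because of the cap at $1$: for $b=5$ and $x=(0.9,0.025,0.025,0.025,0.025)\in S(5)$ one has $bv=(8,1,1,1,1)$, hence $\|v\|_1=2.4$, and choosing $b=2^k+1$ with one coordinate of $bx$ slightly above $2^k$ shows that the supremum of $\|v\|_1$ over $\Gamma(b)$ tends to $3$. So your bound $\|v\|_1<3$ is the correct statement, whereas the paper's proof merely asserts the constant $2$ ``clearly''. As you observe, only absolute constants are affected downstream; concretely, in Proposition~\ref{lem:ub_by_dyadic_covering} the factor $2^{1/r}$ and the choice $m_i=\lfloor\tfrac12(k/b-2)\rfloor bv_i$ (whose cardinality estimate $\prod_i 2^{m_i-1}\leq 2^{k-3b}$ uses $\sum_i bv_i\leq 2b$) would need to be adjusted, e.g.\ to $3^{1/r}$ and $\lfloor\tfrac13(k/b-3)\rfloor bv_i$ with correspondingly modified numerical constants, but nothing structural changes.
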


\begin{proof}
Given $x \in S(b)$, let $v = \upsilon(x)$. We clearly have $\sum_{i=1}^b v_i \leq 2$ and $b v_i \in \N$ for all indices~$i=1,\dots,b$. Further
\[
 \sharp \{ i \in [b] :  v_i \geq t \} \leq 2/t,
\]
which is a crucial property to estimate the cardinality of the set $\Gamma(b)$.
Let
\begin{align*}
 B(v,k) &:= \{  i \in [b]: v_i = 2^k/b  \}, \quad k \in \N_0,\\
 C(v,k) &:= \{ i \in [b]: v_i \geq 2^k/b \}.
\end{align*}
Clearly, $\sharp B(v,k) \leq \sharp C(v,k) \leq \min \{b, b2^{1-k}\}$. Varying over all elements in the simplex, $B(v,0)$ can be any of the $2^b$ subsets of $[b]$. Fixing $B(v,0)$, there are at most $2^b$ possibilities for $B(v,1)$. Fixing $B(v,0)$ up to $B(v,k-1)$, there are at most $2^{b2^{1-k}}$ possibilities for $B(v,k)$. Hence, in total the set $\Gamma(b)$ may contain at most
\[ 
 2^b \cdot 2^b \cdot \sum_{k=2}^\infty 2^{b2^{k-1}} = 2^{3b}
\]
many elements.
\end{proof}

\noindent The dyadic grid according to Lemma \ref{chapter:entropy:lem:dyadic_grid} allows to establish the following upper bound on entropy numbers. 

\begin{proposition}[Reformulation of Lemma 2.3 in \cite{edmunds/netrusov:2014:schuett}]
\label{lem:ub_by_dyadic_covering} Let $X_1,\dots,X_b$ and $Y_1,\dots,Y_b$ be quasi-Banach spaces, let~$0<p\leq r\leq \infty$, and let $k\in \N$ such that $k \geq 8b$. Then, we have
\begin{equation} 
 \begin{split}
   &e_{k+1}(\id:\ell_p(\{X_i\}_{i=1}^b) \to \ell_r(\{Y_i\}_{i=1}^b)) \\
   &~~~~~~~~\leq 2^{1/r} \, 8^{1/p-1/r} \max_{i \in [b]} \max_{\lfloor\frac{3k}{8b}\rfloor \leq m \leq k} (m/k)^{1/p-1/r} e_{m}(\id:X_i \to Y_i).
 \end{split}
\end{equation}
\end{proposition}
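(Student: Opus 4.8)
The plan is to construct the covering of $B_{\ell_p(\{X_i\}_{i=1}^b)}$ sketched above: use the dyadic grid $\Gamma(b)$ from Lemma~\ref{chapter:entropy:lem:dyadic_grid} to reduce the unit ball to finitely many cuboids, then cover each cuboid by a Cartesian product of almost-optimal coverings of the factor balls $B_{X_i}$ in $Y_i$. I would assume $p<\infty$; the case $p=r=\infty$ is the trivial Cartesian-product situation noted at the start of Section~\ref{sec:mixed:q=u}. For $x\in B_{\ell_p(\{X_i\}_{i=1}^b)}$ the vector $(\|x_1\|_{X_1}^p,\dots,\|x_b\|_{X_b}^p)$ has nonnegative entries summing to $\|x\|_{\ell_p(\{X_i\})}^p\le1$, so it lies in $S(b)$, and Lemma~\ref{chapter:entropy:lem:dyadic_grid}(i) gives $v\in\Gamma(b)$ with $\|x_i\|_{X_i}^p\le v_i$ for all $i$, i.e.\ $x\in\prod_{i=1}^b v_i^{1/p}B_{X_i}$. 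Hence $B_{\ell_p(\{X_i\}_{i=1}^b)}$ is contained in the union of at most $\sharp\Gamma(b)\le2^{3b}$ such cuboids, and for each of them, by Lemma~\ref{chapter:entropy:lem:dyadic_grid}(ii)--(iii), $n_i:=bv_i$ is a positive integer with $\sum_{i=1}^b n_i=b\sum_{i=1}^b v_i\le2b$.

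Next I would cover one fixed cuboid $\prod_{i=1}^b v_i^{1/p}B_{X_i}$. For $i\in[b]$ put
$$
m_i:=\max\Bigl\{\,\bigl\lceil\tfrac{n_ik}{8b}\bigr\rceil,\ \bigl\lfloor\tfrac{3k}{8b}\bigr\rfloor\,\Bigr\}\in\N.
$$
Using $k\ge8b$ and $n_i\le\sum_j n_j\le2b$ one checks that $n_ik/(8b)\le m_i$ and $\lfloor 3k/(8b)\rfloor\le m_i\le k$, so $m_i$ lies in the range over which the maximum on the right-hand side of the claim is taken. By definition of the entropy numbers, $B_{X_i}$ can be covered by $2^{m_i-1}$ balls of radius $e_{m_i}(\id:X_i\to Y_i)$ in $Y_i$ (up to an arbitrarily small enlargement of the radius, which I suppress and let tend to $0$ at the end). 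Rescaling these coverings by $v_i^{1/p}$ and forming the Cartesian product over $i\in[b]$ yields a covering of the cuboid by $\prod_{i=1}^b2^{m_i-1}=2^{\sum_i(m_i-1)}$ balls of $\ell_r(\{Y_i\}_{i=1}^b)$, each of radius $\bigl(\sum_{i=1}^b(v_i^{1/p}\,e_{m_i}(\id:X_i\to Y_i))^r\bigr)^{1/r}$ (with the usual $\sup$-modification when $r=\infty$), since a product $\prod_i(c_i+\rho_iB_{Y_i})$ is contained in an $\ell_r$-ball of radius $(\sum_i\rho_i^r)^{1/r}$.

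Two estimates then finish the argument. Writing $M$ for the maximum on the right-hand side of the claim, the inequality $m_i\ge n_ik/(8b)$, the fact that $1/p-1/r\ge0$, and $m_i$ being in range give
$$
v_i^{1/p}\,e_{m_i}(\id:X_i\to Y_i)\le(n_i/b)^{1/p}(k/m_i)^{1/p-1/r}M\le8^{1/p-1/r}(n_i/b)^{1/r}M,
$$
so raising to the $r$-th power, summing over $i\in[b]$ and using $\sum_i n_i\le2b$ bounds the radius of every cuboid covering by $2^{1/r}8^{1/p-1/r}M$. For the cardinality, I would split $[b]$ according to which term realizes $m_i$ and use $\lceil t\rceil-1\le t$, $\sum_i n_i\le2b$, $\lfloor 3k/(8b)\rfloor-1\le 3k/(8b)$ and $|[b]|=b$ to get $\sum_{i=1}^b(m_i-1)\le k/4+3k/8=5k/8$. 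Consequently the whole of $B_{\ell_p(\{X_i\}_{i=1}^b)}$ is covered by at most $2^{3b}\cdot2^{5k/8}=2^{3b+5k/8}\le2^k$ balls of radius $\le2^{1/r}8^{1/p-1/r}M$, the last inequality being exactly where the hypothesis $k\ge8b$ is used; hence $e_{k+1}(\id:\ell_p(\{X_i\}_{i=1}^b)\to\ell_r(\{Y_i\}_{i=1}^b))\le2^{1/r}8^{1/p-1/r}M$, and letting the suppressed enlargement tend to $0$ finishes the proof.

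I expect the main obstacle to be the joint bookkeeping in the last two estimates: each $m_i$ must be large enough to lie in the window $[\lfloor 3k/(8b)\rfloor,k]$ (so that the maximum $M$ can be invoked) and to make $v_i^{1/p}e_{m_i}$ small, yet small enough that $\sum_{i=1}^b(m_i-1)+3b\le k$; it is precisely the calibration $m_i\approx n_ik/(8b)$, the bound $\sum_i n_i\le2b$ from Lemma~\ref{chapter:entropy:lem:dyadic_grid}(ii), and the hypothesis $k\ge8b$ that reconcile these requirements, while the constant $8$ is what converts the outer weight $(n_i/b)^{1/p}$ into the $\ell_r$-summable weight $(n_i/b)^{1/r}$. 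Handling the boundary of the window (the $\max$ defining $m_i$) and the $r=\infty$, $p=\infty$ modifications is routine.
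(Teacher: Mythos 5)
Your proof is correct and follows essentially the same route as the paper: the dyadic-grid reduction of $B_{\ell_p(\{X_i\})}$ to at most $2^{3b}$ cuboids, followed by product coverings of the cuboids with calibrated indices $m_i\simeq n_ik/(8b)$, yielding the same constants $2^{1/r}8^{1/p-1/r}$. The only difference is cosmetic: you take $m_i=\max\{\lceil n_ik/(8b)\rceil,\lfloor 3k/(8b)\rfloor\}$ whereas the paper takes $m_i=\lfloor\tfrac12(k/b-2)\rfloor bv_i$, and both choices satisfy the same window and cardinality bookkeeping.
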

\begin{proof}
Consider the transformed grid
\[ 
 \Gamma(b,p) = \big\{ (v_1^{1/p}, \dots, v_b^{1/p}): v \in \Gamma(b)  \big\}.
\]
By Lemma \ref{chapter:entropy:lem:dyadic_grid} (i), we have
\[ 
 B_{\ell_p(\{X_i\}_{i=1}^b)} \subset \bigcup_{v \in \Gamma(b,p)} U(v),
\]
where $U(v)$ is the cuboid defined in \eqref{eq:cuboids}.

Let~$v \in \Gamma(b,p)$ be given by $v = (v_1^{1/p}, \dots, v_b^{1/p})$. For each
$$
 m_i = \lfloor 1/2(k/b-2) \rfloor b v_i, \quad i \in [b],
$$
let $\mathcal C_i$ be a~$e_{m_i}(v_i^{1/p}B_{X_i},Y_i)$-covering. Then, for every $x \in U(v)$, there is $y \in \ell_r^b(Y)$ such that~$y_{i\cdot} \in \mathcal C_i$ and
\begin{align*} 
 \|x - y\|_{\ell_r^b(Y)} \leq \left( \sum_{i=1}^b v_i^{r/p} e_{m_i}(B_{X_i},Y_i)^r \right)^{1/r} \leq \left( \sum_{i=1}^b v_i \right)^{1/r} \max_{i=1,\dots,b} \max_{j=1,\dots,b} v_j^{1/p-1/r} e_{m_j}(B_{X_i},Y_i).
\end{align*}
By construction of the set $\Gamma(b)$, we have $\left( \sum_{i=1}^b v_i \right)^{1/r} \leq 2^{1/r}$ and
\begin{align*} 
 \max_{j=1,\dots,b} v_j^{1/p-1/r} e_{m_j}(B_{X_i},Y_i)
 &\leq 8^{1/p-1/r} \max_{m=\lfloor \frac{3k}{8b} \rfloor,\dots,k} (m/k)^{1/p-1/r} e_{m}(B_{X_i},Y_i). 
\end{align*}
Finally, note that the product $\mathcal C_1 \times \cdots \times \mathcal C_s$
has cardinality
$$
 \prod_{i=1}^{b} 2^{m_i-1} \leq 2^{k-3b},
$$
which, in combination with $\sharp \Gamma(b,p) \leq 2^{3b}$, implies the desired result.
\end{proof}

Proposition~\ref{lem:ub_by_dyadic_covering} is not the complete final answer. For $k \leq b$, we have to modify the proof of Proposition~\ref{res:generalization_Edmunds_Triebel}. We sketch the proof and refer to the proof of \cite[Thm 3.1]{edmunds/netrusov:2014:schuett} for technical details.

\begin{proposition}\label{prop_small_k} Let $\log_2(b) \leq k \leq b$. Then, we have
\[ 
 e_k(\id:\ell_p^b(X) \to \ell_r^b(Y)) \lesssim A(k,b),
\]
where $A(k,b)$ is defined in Theorem~\ref{res:edne}.
\end{proposition}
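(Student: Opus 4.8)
The plan is to transplant the sparse‑block construction from the proof of Proposition~\ref{res:generalization_Edmunds_Triebel}, but to cover each active $s$‑row block with the dimension‑free dyadic‑grid covering of Proposition~\ref{lem:ub_by_dyadic_covering} in place of the volumetric step~\eqref{eq:bottleneck}. We may assume $p<r$, since for $p=r$ one has $A(k,b)=\|\id\|$ and $e_k\le e_1=\|\id:X\to Y\|$. Fix a sparsity level $s\simeq k/\log(eb/k)$ with a sufficiently large absolute constant; since $k\le b$ we have $\log(eb/k)\ge1$, hence $1\le s\le b$. If the constant forces $s\le1$, i.e.\ $k\lesssim\log(eb/k)$, then $A(k,b)\ge\|\id\|(\log(eb/k)/k)^{1/p-1/r}\gtrsim_{p,r}\|\id\|$ and the trivial bound $e_k\le\|\id\|$ already suffices; the same remark disposes of the finitely many $k$ below an absolute constant, where $\log_2 b\le k$ forces $b\lesssim1$ and so $A(k,b)\gtrsim\|\id\|$. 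Thus we may assume $s\ge2$ and $k$ large.

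As in~\eqref{eq:union_of_sparse}, write $B_{\ell_p^b(X)}\subseteq\bigcup_{I\subseteq[b],\,\sharp I=s}B_I$, where $B_I$ is the set of $x\in B_{\ell_p^b(X)}$ whose $s$ rows of largest $\|\cdot\|_X$‑quasi‑norm are indexed by $I$. For $x\in B_I$ let $y$ be $x$ with its $I^c$‑rows replaced by $0$; the deleted rows carry the $b-s$ smallest entries of $(\|x_{i\cdot}\|_X)_{i\in[b]}$, a vector of $\ell_p^b$‑norm $\le1$, so the Stechkin‑type best $s$‑term inequality gives $\|x-y\|_{\ell_r^b(X)}\lesssim_{p,r}s^{-(1/p-1/r)}$ and therefore $\|x-y\|_{\ell_r^b(Y)}\le\|\id\|\,\|x-y\|_{\ell_r^b(X)}\lesssim_{p,r}\|\id\|(\log(eb/k)/k)^{1/p-1/r}\le A(k,b)$ by the choice of $s$. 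It remains to cover $\{x_I:x\in B_I\}\subseteq B_{\ell_p^I(X)}\cong B_{\ell_p^s(X)}$ in $\ell_r^s(Y)$ to radius $\lesssim_{p,r}A(k,b)$ using at most $2^{k-1}/\binom{b}{s}$ points: padding such a covering by $b-s$ zero rows, ranging over all $I$, and combining the block error with the above truncation error on the disjoint row sets $I,I^c$ through the $r$‑triangle inequality yields a covering of $B_{\ell_p^b(X)}$ of cardinality $\le2^{k-1}$ and radius $\lesssim_{p,r}A(k,b)$, which is the assertion.

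For the block covering, recall $\log_2\binom{b}{s}\le s\log_2(eb/s)$; since $s\simeq k/\log(eb/k)$ and $\log(eb/s)\lesssim\log(eb/k)$, the quantity $s\log_2(eb/s)$ is at most a small fraction of $k$ for $k$ large, so we may take the budget $\kappa:=k-\lceil s\log_2(eb/s)\rceil$ for covering $B_{\ell_p^s(X)}$, which satisfies $\binom{b}{s}\,2^{\kappa-1}\le2^{k-1}$ and $\kappa-1\ge k/2$. As $8s\le k/4<\kappa-1$, Proposition~\ref{lem:ub_by_dyadic_covering}, applied with $b$ replaced by $s$, with $X_i\equiv X$ and $Y_i\equiv Y$, and with its index parameter equal to $\kappa-1$, bounds $e_\kappa(\id:\ell_p^s(X)\to\ell_r^s(Y))$ by $C_{p,r}\max_{\lfloor3(\kappa-1)/(8s)\rfloor\le m\le\kappa-1}(m/(\kappa-1))^{1/p-1/r}e_m(\id:X\to Y)$. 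Using $\kappa-1\ge k/2$, this is $\lesssim_{p,r}\max_{1\le m\le k}(m/k)^{1/p-1/r}e_m(\id:X\to Y)=D(1,k)\le A(k,b)$, which gives the block covering (of cardinality $\le2^{\kappa-1}$) and finishes the proof.

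I expect the main obstacle to be the bookkeeping around the sparsity level rather than anything conceptual: $s$ must be large enough that the truncation error $s^{-(1/p-1/r)}$ stays below the Sch\"utt term $(\log(eb/k)/k)^{1/p-1/r}$ of $A(k,b)$, yet small enough that $\log_2\binom{b}{s}\simeq s\log(eb/s)$ eats only a fraction of $k$, and -- the delicate point -- so that the leftover budget $\kappa$ still meets the admissibility threshold $\kappa\ge8s$ of Proposition~\ref{lem:ub_by_dyadic_covering}. This last constraint is precisely why one cannot take $s\simeq k$ when $k\simeq b$ and has to genuinely sparsify first; checking the three requirements simultaneously, together with the degenerate cases $s\le1$ and bounded $k$, is where the ``technical details'' of the proof of \cite[Thm.~3.1]{edmunds/netrusov:2014:schuett} enter.
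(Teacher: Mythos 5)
Your construction is essentially the paper's own proof (which the paper only sketches, deferring details to Edmunds--Netrusov): split $B_{\ell_p^b(X)}$ into $s$-row-sparse pieces, bound the truncation error via the Stechkin-type inequality by $\|\id\|\bigl(\log(eb/k)/k\bigr)^{1/p-1/r}$, cover the active block by the dyadic-grid covering of Proposition~\ref{lem:ub_by_dyadic_covering}, and choose $s \simeq k/\log(eb/k)$, with you additionally making the $\binom{b}{s}$-versus-$2^{\kappa-1}$ budget split explicit. One small correction to your write-up: the absolute constant in the choice of $s$ must be taken sufficiently \emph{small} rather than ``sufficiently large'' (otherwise $s\le b$, $s\log_2(eb/s)\le k/4$ and $8s\le \kappa-1$ can all fail when $k\simeq b$), which is harmless because the truncation error only requires $s\gtrsim k/\log(eb/k)$ up to a constant that is absorbed into $\lesssim_{p,r}$ --- precisely the bookkeeping tension you yourself flag at the end.
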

\begin{proof}[Proof sketch]
Let $s \in [k]$. It is clear that, analogously to \eqref{eq:union_of_sparse}, we have
\[ 
 B_{\ell_p^b(X)} \subseteq \bigcup_{I \subset [b]:\, \sharp I = s} B_I\,.
\] 
Similar as in Proposition \ref{res:generalization_Edmunds_Triebel}, we can use a covering for $B_{\ell_p^s(X)}$ to construct a covering for $B_I$. Consider now $\varepsilon = e_k(B_{\ell_p^s(X)},\ell_r^s(Y))$ and let $\Gamma_0$ be a minimal $\varepsilon$-covering of $B_{\ell_p^s(X)}$ in $\ell_r^s(Y)$. Let $\Gamma_I = \Gamma_0 \times \{0\}^{[b]\setminus I}$. Then, for every $x \in B_I$, there is $y \in \Gamma_I$ such that
\[ 
 \|x - y\|_{\ell_r^b(Y)} \lesssim_{r,p} \varepsilon + s^{1/r-1/p}\|\id: X \to Y\|, 
\]
where the second term on the right-hand side follows from the best $s$-term approximation result already used in Proposition \ref{res:generalization_Edmunds_Triebel}. Consequently, we have
\begin{align}\label{eq:est1} 
 \|x - y\|_{\ell_r^b(Y)} \lesssim_{r,p} \max \left\lbrace e_k(B_{\ell_p^s(X)},\ell_r^s(Y)),\, s^{1/r-1/p}\|\id: X \to Y\| \right\rbrace.
\end{align} 
In contrast to Proposition \ref{res:generalization_Edmunds_Triebel}, volumetric arguments would now give a suboptimal estimate for the entropy numbers~$e_k(B_{\ell_p^s(X)},\ell_r^s(Y)$. In this general situation, it requires Proposition~\ref{lem:ub_by_dyadic_covering} with~$X_1=\dots=X_b=X$ and $Y_1=\dots=Y_b=Y$ to get the proper estimate. Concretely, since $s \leq k$, we have
\begin{align}\label{eq:est2} 
 e_k(B_{\ell_p^s(X)}, \ell_r^s(Y)) \leq e_k(B_{\ell_p^k(X)}, \ell_r^k(Y))\,,
\end{align}
which leads in combination with Proposition~\ref{lem:ub_by_dyadic_covering} and \eqref{eq:est1} to an upper bound of the form
\[ 
 e_k(B_{\ell_p^b(X)}, \ell_r^b(Y)) \lesssim  \max \left\lbrace s^{1/r-1/p}\|\id: X \to Y\|, \, \max_{m \in [k]} \, (m/k)^{1/p-1/r} e_m(B_X,Y) \right\rbrace.
\]
The usual arguments show that it is optimal to choose $s$ of the order $k/\log(eb/k)$.
\end{proof}

\begin{remark}\label{rem:suboptimal_combinatorics}

We close this section with some remarks concerning the lower bound in Theorem~\ref{res:edne}. Its proof relies on two surprisingly simple observations, see~\cite{edmunds/netrusov:2014:schuett} for details.

{\em (i)} Let $M$ be a maximal $\varepsilon$-packing of $B_X$ in $Y$. Using the Gilbert-Varshamov bound, which is well-known in coding theory~\cite{gilbert:1952,varshamov:1957}, we know that $(2s)^{-1/p}M^{2s} \subset B_{\ell_p^b(X)}$ contains~$N$ elements of mutual distance $s^{1/r-1/p}\varepsilon$, where $N \simeq \card(M)^s$. This leads to the lower bound
 $$e_{ms}(B_{\ell_p^s(X)}, \ell_r^s(Y)) \gtrsim s^{1/r - 1/p} e_{4m+6}(B_X, Y),$$
 see~\cite[p. 68]{mayer:2018:thesis} and~\cite[Lem. 2.6]{edmunds/netrusov:2014:schuett} for a more general formulation. Given $k \in \N$, we have to make a good choice for the dimension $s$ to maximize the lower bound. Choose~$s = k/m$ for some~$m \in [k]$ to obtain
 $$e_k(B_{\ell_p^{s}(X)}, \ell_r^{s}(Y)) \gtrsim (m/k)^{1/p-1/r} e_{4m+6}(B_X,Y).$$
 If $k \leq b$, we conclude
 $$
  e_k(B_{\ell_p^{b}(X)}, \ell_r^{b}(Y)) \gtrsim \max_{m \in [k]} \; (m/k)^{1/p-1/r} e_{4m+6}(B_X,Y).
 $$
 If $k \geq b$, then $m \geq k/b$ guarantees $s=k/m \leq b$ and thus
 $$
   e_k(B_{\ell_p^{b}(X)}, \ell_r^{b}(Y)) \gtrsim \max_{k/b \leq m \leq k} \; (m/k)^{1/p-1/r} e_{4m+6}(B_X,Y).
 $$

{\em (ii)} Choose a vector $x \in B_X$ such that $$\|x\|_Y \geq \frac{1}{2}\|\id:X \to Y\|.$$ We construct a packing by building row-sparse matrices, where the nonzero rows contain copies of $x$ and the row support sets are chosen according to the following combinatorial fact that is well-known in various disciplines of mathematics, see. e.g.,~\cite[Lemma~10.12]{rauhut/foucart:compressive_sensing},~\cite{kuehn:2001:lower_estimate},~\cite{foucart/pajor/rauhut/ullrich:2010:gelfand}  or~\cite[Prop. 2.21, p. 219]{pinkus:nwidth}. 
Given $s,n \in \N$ such that~$0< s < n/2$, there exist subsets~$I_1, \ldots, I_N$ of~$[n]$, where
  \begin{align*}
  N \geq \Big(\frac{n}{8s}\Big)^{s},
  \end{align*}
  such that each subset~$I_i$ has cardinality~$2s$ and
  $$ \sharp(I_i \cap I_j) < s \ \ \text{ whenever } i \neq j.$$
  This leads to the lower bound
  $$e_k(B_{\ell_p^{b}(X)}, \ell_r^{b}(Y)) \gtrsim \|\id: X \to Y\| \left(\frac{\log(eb/k)}{k}\right)^{1/p-1/r}.$$
\noindent In view of the packing construction that we have mentioned in Remark~\ref{remprop5} it is somewhat surprising that it is not necessary to combine the combinatorics of the two observations in order to obtain the optimal abstract bound in Theorem~\ref{res:edne}. An explanation is given in~\cite[Rem. 4.13, p. 69]{mayer:2018:thesis}.

\end{remark}
\section{Consequences of the Edmunds-Netrusov result}\label{sec:matching_bounds}

We discuss some consequences of Theorem~\ref{res:edne}. Let us begin with considering the entropy numbers
\[ 
 e_k := e_k(\id:\ell_p^b(\ell_q^d) \to \ell_r^b(\ell_u^d)), \quad 0<p\leq r \leq \infty, \, 0<q\leq u \leq \infty.
\]
We have the following matching bounds.

\begin{theorem}\label{sec:mixed:res:entropy_matching_bounds}
Let~$0<p \leq r\leq \infty$ and~$0< q \leq u \leq \infty$. Then, we have
\begin{align*}
e_k \simeq
\begin{cases} 
	 1&: 1\leq k\leq \log(bd),\\	
	 b^{-(1/p-1/r)}d^{-(1/q-1/u)}2^{-\frac{k-1}{bd}} &: k\geq bd.
\end{cases}
\end{align*}
For~$\log(bd) \leq k \leq bd$, we have the following case distinctions.
\begin{enumerate}[label=(\roman*)]
  \item Let~$1/p-1/r > 1/q-1/u \geq 0$.
  \begin{enumerate}[label=(i.\alph*)]
   \item In the special case~$q=u$, we have
         \begin{align*}
          e_k \simeq
          \begin{cases}
           1 &: \log(bd) \leq k \leq d,\\
           \left\lbrace \frac{\log(eb/k)+d}{k} \right\rbrace^{1/p-1/r} &: d \leq k \leq bd.
          \end{cases}
         \end{align*}
   \item If~$q<u$ and~$b \leq d$, then
     \begin{align*}
     e_k \simeq
     \begin{cases}
     \left(\frac{\log(ed/k)}{k}\right)^{1/q-1/u} &: \log(bd) \leq k \leq d,\\
     \left(\frac{d}{k}\right)^{1/p - 1/r} d^{1/u - 1/q} &: d \leq k \leq bd.
     \end{cases}
     \end{align*}
   \item If~$q < u$ and~$d \leq b$, then
     \begin{align*}
     e_k \simeq
     \begin{cases}
      \max\left\lbrace \left(\frac{\log(eb/k)}{k}\right)^{1/p-1/r}, \left(\frac{\log(ed/k)}{k}\right)^{1/q-1/u}  \right\rbrace &: \log(bd) \leq k \leq d,\\
      \max \left\lbrace \Big(\frac{\log(eb/k)}{k}\Big)^{1/p-1/r}, \Big(\frac{d}{k}\Big)^{1/p-1/r}d^{1/u-1/q} \right\rbrace &: d \leq k \leq b,\\
      \left(\frac{d}{k}\right)^{1/p-1/r} d^{1/u-1/q} &: b \leq k \leq bd.
     \end{cases}
     \end{align*}
  \end{enumerate}
  \item Let~$1/q-1/u \geq 1/p - 1/r \geq 0$. Then, we have
  \begin{align*} 
   e_k \simeq
      \begin{cases}
      \left(\frac{\log(ebd/k)}{k}\right)^{1/p-1/r} &: \log(bd) \leq k \leq b\log(d),\\
      b^{1/r-1/p} \left(\frac{b\log(ebd/k)}{k}\right)^{1/q-1/u} &: b\log(d) \leq k \leq bd.
      \end{cases}
  \end{align*}
\end{enumerate}
\end{theorem}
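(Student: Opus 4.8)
The plan is to reduce the whole statement to Theorem~\ref{res:edne} applied with the inner spaces $X=\ell_q^d$ and $Y=\ell_u^d$. For these we know everything we need: $\|\id:\ell_q^d\to\ell_u^d\|=1$, and the full decay of $e_\ell(\id:\ell_q^d\to\ell_u^d)$ --- for $q<u$ by Schütt's theorem (Theorem~\ref{lem:schuett}), and for $q=u$ by the elementary volume bound $e_\ell(\id:\ell_q^d\to\ell_q^d)\simeq2^{-(\ell-1)/d}$. We may assume $b\ge2$ and $d$ larger than a fixed absolute constant, so that Theorem~\ref{res:edne} and Proposition~\ref{res:generalization_Edmunds_Triebel} apply; the excluded cases are checked directly via Schütt's theorem on $\ell_p^{bd}$, the bounded inner dimension only affecting absolute constants. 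The two extreme ranges of $k$ are handled without Theorem~\ref{res:edne}. For small $k$: the $bd$ canonical basis matrices $E_{ij}$ lie in $B_{\ell_p^b(\ell_q^d)}$ and are pairwise $\ell_r^b(\ell_u^d)$-separated by at least $\min\{2^{1/r},2^{1/u}\}\ge1$, hence $M_{1/2}\ge bd$ and $e_k\gtrsim1$ whenever $2^{k-1}<bd$; combined with $e_k\le e_1=\|\id\|=1$ this gives $e_k\simeq1$ for $k\le\log_2(bd)$, which matches the claim on $1\le k\le\log(bd)$ and lets us assume $k\ge\log_2(b)$ afterwards. For $k\ge bd$ we use the standard volumetric argument \eqref{vol_arg}: iterating the product volume formula \eqref{volume} once and invoking Stirling gives $\vol(B_{\ell_p^b(\ell_q^d)})^{1/(bd)}\simeq b^{-1/p}d^{-1/q}$ (and likewise for the target), so the volume ratio raised to the power $1/(bd)$ is $\simeq b^{-(1/p-1/r)}d^{-(1/q-1/u)}$, giving $e_k\simeq b^{-(1/p-1/r)}d^{-(1/q-1/u)}2^{-(k-1)/(bd)}$.

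For the main range $\log(bd)\le k\le bd$, Theorem~\ref{res:edne} reduces everything to a one-parameter optimisation. Write $\theta:=1/p-1/r$ and $\varphi(\ell):=(\ell/k)^\theta e_\ell(\id:\ell_q^d\to\ell_u^d)$. If $k\le b$, Theorem~\ref{res:edne}(i) identifies $e_k$, up to absolute constants, with $\max\{(\log(eb/k)/k)^\theta,\ \max_{1\le\ell\le k}\varphi(\ell)\}$; if $b\le k\le bd$, Theorem~\ref{res:edne}(ii) identifies $e_k$, up to absolute constants, with $\max_{ck/b\le\ell\le k}\varphi(\ell)$, where the ambiguity between the two admissible constants $c_1,c_2$ in the range of $\ell$ is immaterial because Schütt's formula is doubling, so $e_{c_1k/b}(\id:\ell_q^d\to\ell_u^d)\simeq e_{c_2k/b}(\id:\ell_q^d\to\ell_u^d)$. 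Thus it suffices to locate the maximiser of $\varphi$ on an interval $[\ell_{\min},k]$, $\ell_{\min}\in\{1,\ \simeq k/b\}$, and to compare the outcome with $(\log(eb/k)/k)^\theta$. Substituting the three pieces of $e_\ell(\id:\ell_q^d\to\ell_u^d)$: on $1\le\ell\le\log d$ one has $\varphi(\ell)=(\ell/k)^\theta$, increasing; on $\log d\le\ell\le d$ one has, up to the factor $(\log(1+d/\ell))^{1/q-1/u}\simeq1$ at the relevant scales, $\varphi(\ell)\simeq k^{-\theta}\ell^{\theta-(1/q-1/u)}$, so $\varphi$ is essentially increasing in case (i) and decreasing in case (ii); and on $\ell\ge d$ the factor $2^{-\ell/d}$ always forces the maximiser near $\ell\simeq d$.

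In case (i) the maximiser therefore sits at $\ell\simeq\min\{k,d\}$, and evaluating $\varphi$ there and comparing with $(\log(eb/k)/k)^\theta$ reproduces exactly (i.a)--(i.c): in the sub-case $q=u$ the required upper bound is in fact already Proposition~\ref{res:generalization_Edmunds_Triebel} with $X=\ell_q^d$; in the sub-cases $q<u$, $b\le d$ and $q<u$, $d\le b$ one uses repeatedly $\max\{a,c\}^\theta\simeq(a+c)^\theta$, the fact that the $\log\log$-type factors are $\simeq1$ at the scales involved, and, in (i.b), the comparison $(\log(eb/k)/k)^\theta\lesssim(\log(ed/k)/k)^{1/q-1/u}$, which genuinely needs $b\le d$, $\theta\ge1/q-1/u$ and $\log(eb/k)/k\le1$ (valid for $k\ge\log(bd)$). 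In case (ii) the maximiser sits at the left endpoint: for $k\le b\log d$ (so that the left endpoint is $\lesssim\log d$) the flat piece wins and the identity $\log(eb/k)+\log d=\log(ebd/k)$ turns the outcome into $e_k\simeq(\log(ebd/k)/k)^{1/p-1/r}$; for $b\log d\le k\le bd$ the endpoint $\ell_{\min}\simeq k/b$ lands in the middle piece, where $\varphi(k/b)\simeq b^{1/r-1/p}(b\log(ebd/k)/k)^{1/q-1/u}$ --- the second line of (ii). The degenerate case $p=r$ falls under (ii) via the Remark following Theorem~\ref{res:edne}: there $A(k,b)=1$ and $D(m,k)=e_m(\id:\ell_q^d\to\ell_u^d)$, so the same computation applies with $\theta=0$.

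For the matching lower bounds in the main range nothing new is needed: the lower bounds furnished by Theorem~\ref{res:edne} have the same functional form as the upper ones --- literally $A(k,b)$ in (i), and $D(c_1k/b,k)$ in (ii) --- so the optimisation above delivers them simultaneously. I expect the only real difficulty to be organisational: keeping track of which of $\ell_{\min}$, $k$, $k/b$ lies in which Schütt piece and which summand of the nested maxima dominates in each sub-sub-regime, and verifying that the breakpoints of the final formulae --- at $k=d$ in (i.a)/(i.b), at $k=b$ in (i.c), at $k=b\log d$ in (ii) --- emerge correctly from the optimisation even though Theorem~\ref{res:edne} only changes regime at $k=b$, together with the routine boundary-consistency checks between adjacent pieces.
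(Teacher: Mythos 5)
Your proposal is correct and follows essentially the same route as the paper: both reduce the main range $\log(bd)\le k\le bd$ to Theorem~\ref{res:edne} with inner embedding $\ell_q^d\to\ell_u^d$, insert Schütt's theorem (resp.\ the volume bound for $q=u$), locate the maximiser of $(\ell/k)^{1/p-1/r}e_\ell(\id:\ell_q^d\to\ell_u^d)$ in each parameter regime and compare with the $(\log(eb/k)/k)^{1/p-1/r}$ term, while the extreme ranges $k\le\log(bd)$ and $k\ge bd$ are settled by elementary packing and volumetric arguments. The only differences are cosmetic: you write out the extreme-range, $p=r$, and $c_1$ versus $c_2$ details that the paper delegates to references or remarks.
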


\begin{proof} For~$1 \leq k \leq \log(bd)$ and~$k \geq bd$, it requires only standard volumetric arguments, see \cite[Appendix A]{mayer:2018:thesis} for details. Let us also refer to \cite[Lemma 3]{dung:2001:nonlinear}, where this case has been already considered. Let~$D(m,k)$ and~$A(k,b)$ be as defined in Theorem \ref{res:edne}. Moreover, throughout the proof, we write for~$k,l \in \N$,
$$
 s_{k,l} := (l/k)^{1/p-1/r} e_l(\id: \ell_q^d \to \ell_u^d).
$$

\emph{Ad (i.a).}  Since~$q=u$, it follows from Theorem~\ref{lem:schuett} that~$e_l(\id: \ell_q^d \to \ell_u^d) \simeq 1$ for~$1 \leq l \leq d$ and consequently that~$D(1,k) = D(k/b,k) \simeq 1$ and~$A(k,b) \simeq 1$ for all~$k \leq d$. Now, for~$k \geq d$, we have that~$s_{k,l} \simeq (l/k)^{1/p-1/q}$ for~$1 \leq l \leq d$, so the sequence is bounded from above by a monotonically increasing sequence. For~$d \leq l \leq k$, we have
$$
 s_{k,l} \simeq (l/k)^{1/p - 1/r} 2^{-l/d}:=t_{k,l}.
$$
Since~$2^{-l/d}$ decays faster in $l$ than~$(l/k)^{1/p - 1/r}$ increases, we conclude that for~$d \leq l \leq k$, the sequence~$s_{k,l}$ is ``essentially monotonically decreasing''. To be more  precise $t_{k,l}$ attains at~$l=\beta_{p,r}d$ its maximum, where the factor $\beta$ depends only on $p$ and $r$. Hence, the maximum of $s_{k,l}$ can be bounded from above by a constant times the maximum of $t_{k,l}$ and therefore by $c_{p,r}(d/k)^{1/p-1/r}$. Using analogous arguments for~$D(k/b,k)$, we conclude that~$\widetilde D(1,k) = D(k/b,k) \simeq (d/k)^{1/p-1/r}$ and
\[ 
 A(k, b) \simeq \max\left\lbrace \left(\frac{\log(eb/k)}{k}\right)^{1/p-1/r}, \Big(\frac{d}{k}\Big)^{1/p - 1/r} \right\rbrace \simeq \max \left\lbrace \frac{\log(eb/k)}{k}, \frac{d}{k}\right\rbrace^{1/p-1/r}
\]
for~$d \leq k \leq b$.

\emph{Ad (i.b).} Consider now~$0<q<u$ and~$b \leq d$.  For~$\log(bd) \leq k \leq b$, we have in consequence of Theorem~\ref{lem:schuett}, that~$s_{k,l} \simeq (l/k)^{1/p - 1/r}$ for~$1 \leq l \leq \log(d)$ and
$$
 s_{k,l} \simeq (l/k)^{1/p-1/r} \left(\frac{\log(ed/l)}{l}\right)^{1/q-1/u}
$$
Since~$1/p-1/r > 1/q-1/u$, the sequence~$s_{k,l}$ is bounded from above and below up to a constant by a monotonically increasing sequence and consequently, the maximum is attained at~$l=k$ such that~$D(1,k) \simeq (\log(ed/k)/k)^{1/q-1/u}$. Since~$b \leq d$, we further have
$$D(1,k) \leq A(k,b) \lesssim D(1,k).$$
For~$b \leq k \leq d$ we find as before that~$D(k/b, k) \simeq (\log(ed/k)/k)^{1/q-1/u}$ and for~$d < k \leq bd$, we have the estimate~$$D(k/b,k) \simeq \Big(\frac{d}{k}\Big)^{1/p-1/r} d^{1/u-1/q}\,.$$

\emph{Ad (i.c).}
Consider now~$d \le b$. For~$\log(bd) \leq k \leq d$, we find~$D(1,k) \simeq (\log(ed/k)/k)^{1/q-1/u}$ since the sequence~$s_{k,l}$ is bounded from below and above by a sequence that increases monotonically in~$l$. If~$d \leq k \leq b$, then
$$D(1,k) \simeq \Big(\frac{d}{k}\Big)^{1/p-1/r} d^{1/u-1/q}$$
and
\begin{align*}
 A(k,b) &\simeq \max \left\lbrace \Big(\frac{\log(eb/k)}{k}\Big)^{1/p-1/r}, \Big(\frac{d}{k}\Big)^{1/p-1/r} d^{1/u-1/q} \right\rbrace
\end{align*}
Finally, if~$b \leq k \leq bd$, then~$D(k/b,k) \simeq (d/k)^{1/p-1/r} d^{1/u-1/q}$.

\emph{Ad (ii).} For~$\log(bd) \leq k \leq b$, we observe that
\[ 
 D(1,k) \simeq \left(\frac{\log(d)}{k}\right)^{1/p-1/r}
\]
since the term~$e_\ell(B_{\ell_q^d}, \ell_u^d)$ is decaying in~$\ell$ at least as fast as~$(\ell/k)^{1/p-1/r}$ is growing. Hence,
\begin{align*} 
 A(k,b) &\simeq \max \left\lbrace \Big(\frac{\log(eb/k)}{k}\Big)^{1/p-1/r}, \Big(\frac{\log(d)}{k}\Big)^{1/p-1/r}\right\rbrace\\
        &\simeq \left(\frac{\log(ebd/k)}{k}\right)^{1/p-1/r}.
\end{align*}
Next, we consider~$b \leq k \leq b \log(d)$. Since~$k/b \leq \log(d)$, we find
$$D(k/b, k) \simeq (\log(d)/k)^{1/p-1/r} \geq (\log(bd/k)/k)^{1/p-1/r}\,,$$
where we have used~$b/k \leq 1$ in the last estimate. At the same time, since~$k/b \leq \log(d)$, we also have~$\log(bd/k) \gtrsim \log(d)$ and thus
\[ 
 D(k/b, k) \simeq \left( \frac{\log(ebd/k)}{k}\right)^{1/p-1/r}.
\]
Finally, for~$b \log(d) \leq k \leq bd$ it is easy to see that
\[ 
 D(k/b, k) \simeq b^{1/p-1/r} \left(\frac{b\log(ebd/k)}{k}\right)^{1/q-1/u}.
\]
 
\end{proof}


\begin{remark}\label{sec:mixed:rem:vybiral}
The upper bound for $k \geq bd$ in Theorem~\ref{sec:mixed:res:entropy_matching_bounds} also follows from \cite[Lem. 3]{dung:2001:nonlinear}. The upper bound in Theorem \ref{sec:mixed:res:entropy_matching_bounds} (ii) has also been proved in \cite[Lem 3.16]{vybiral:2006:diss} for the range~$b \max\{\log(d),\log(b)\} \leq k \leq bd$. The proof there uses the following covering construction, which appeared first in \cite[Proof of Prop. 4]{kuehn/leopold/sickel/skrzypczak:2006:entropy} to our knowledge. Let~$X_1,\dots,X_b$ and~$Y_1,\dots, Y_b$ be (quasi-)Banach spaces and~$0<p,r\leq \infty$. The covering rests on the idea to split the ball~$B_{\ell_p^b({X_1,\dots,X_b})}$ into subsets of matrices with non-increasing rows,
\[ 
B_{\ell_p^b({X_1,\dots,X_b})} \subseteq \bigcup_\pi \{ x \in B_{\ell_p^b({X_1,\dots,X_b})}: \|x_{\pi(1)\cdot}\|_{X_1} \geq \dots \geq \|x_{\pi(b)\cdot}\|_{X_b} \},
\]
where the union is taken over all permutations of~$[b]$. This leads to the upper bound
\begin{align}\label{sec:mixed:eq:covering_KLSS} 
 e_{\sum_{j=1}^b n_j + b \log_2(b)}\big(B_{\ell_p(\{X_j\}_{j=1}^b)}, \ell_r(\{Y_j\}_{j=1}^b)\big) \leq \big( \sum_{j=1}^b j^{-r/p} e_{n_j}(B_{X_j}, Y_j)^r\big)^{1/r}
\end{align}
for~$n_1,\dots,n_b \in \N$. If
\begin{align*}
X=X_1=\dots=X_b=\ell_q^d \quad \text{ and } \quad Y=Y_1=\dots=Y_b=\ell_u^d
\end{align*}
with~$0<q\leq u$, and we chooses~$n_j \simeq j^{-\alpha}$ for some~$0<\alpha<1$ such that
$$\alpha(1/q-1/u) > 1/p - 1/r,$$
then ~\eqref{sec:mixed:eq:covering_KLSS} is strong enough to obtain the upper bound in Theorem~\ref{sec:mixed:res:entropy_matching_bounds} (ii), provided
$$b\max\{\log(d),\log(b)\} \leq k \leq bd.$$
\qed
\end{remark}

Now we increase the level of abstraction and consider mixed norms of higher order. Let, for~$\mu =1,...,b$, the weighted spaces~$X_{\mu}$ and~$Y_{\mu}$ be given by
\begin{eqnarray}\label{lp(lq)}
  		X_{\mu} &=& \ell_p^{b_\mu}(d_{\mu}^{\alpha}\ell_q^{d_\mu})\,,\\
		\nonumber Y_{\mu} &=& \ell_r^{b_\mu}(d_{\mu}^{\beta}\ell_u^{d_\mu})\,,
  \end{eqnarray}
with~$0<p\leq r\leq \infty$,~$0<q\leq u\leq \infty$ and~$\alpha\, \beta \in \re$. The dimensions~$(d_\mu)_\mu$ and~$(b_\mu)_{\mu}$ are non-decreasing natural numbers satisfying~$d_{\mu} \gtrsim b_{\mu}$. These spaces are used as ``inner spaces" in the way that
$$
	X = \ell_p^b((X_\mu)_{\mu = 1,...,b}) \mbox{ and } 
	Y = \ell_r^b((Y_\mu)_{\mu = 1,...,b})\,.
$$
Note that for $x=(x_{\mu,i,j})_{\mu,i,j} \in X$ with $\mu=1,\dots,b$, $i=1,\dots,b_{\mu}$, $j=1,\dots,d_\mu$, the norm is given by
$$
 \|x\|_X = \left\lbrace \sum_{\mu=1}^{b} \sum_{i=1}^{b_\mu} d_\mu^\alpha \left( \sum_{j=1}^{d_\mu} |x_{\mu,i,j}|^q \right)^{p/q} \right\rbrace^{1/p}.
$$
We are interested in the behavior of the entropy numbers 
$$
	e_k(\id:X\to Y) = e_k(\id:\ell_p^b(X_\mu)\to \ell_r^b(Y_\mu))
$$
in the special situation~$1/q-1/u < 1/p-1/r$\,.

\begin{proposition}\label{prop14} Let~$0 \leq 1/q-1/u < 1/p-1/r$ and 
$\alpha-\beta\leq 1/p-1/r-(1/q-1/u)$. Let further~$X, Y$ and~$X_{\mu}$,~$Y_{\mu}$ as above. Then we have for all~$k\geq 8b$ and~$k \geq \max\limits_{\mu = 1,...,b} d_{\mu}$
$$
		e_k(\id:\ell_p^b(X_\mu) \to \ell_r(Y_\mu)) \lesssim 
		\Big(\frac{1}{k}\Big)^{\alpha-\beta+1/q-1/u}\,.
$$

\end{proposition}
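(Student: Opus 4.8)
The plan is to combine the abstract dyadic-grid covering bound of Proposition~\ref{lem:ub_by_dyadic_covering} with the explicit matching bounds for finite-dimensional mixed-norm balls from Theorem~\ref{sec:mixed:res:entropy_matching_bounds}. Throughout write $\delta_1 := 1/p - 1/r$ and $\delta_2 := 1/q - 1/u$, so the hypotheses read $\delta_1 > \delta_2 \geq 0$ and $\alpha - \beta \leq \delta_1 - \delta_2$. First I would record that the scalar weights factor out of the entropy numbers of the inner embeddings: since $B_{X_\mu} = d_\mu^{-\alpha} B_{\ell_p^{b_\mu}(\ell_q^{d_\mu})}$ and $B_{Y_\mu} = d_\mu^{-\beta} B_{\ell_r^{b_\mu}(\ell_u^{d_\mu})}$, one has
$$
 e_m(\id : X_\mu \to Y_\mu) = d_\mu^{\beta - \alpha} \, e_m\big(\id : \ell_p^{b_\mu}(\ell_q^{d_\mu}) \to \ell_r^{b_\mu}(\ell_u^{d_\mu})\big), \qquad m \in \N .
$$
Applying Proposition~\ref{lem:ub_by_dyadic_covering} with $X_i := X_\mu$ and $Y_i := Y_\mu$ (admissible since $k \geq 8b$) then reduces the whole statement to bounding, uniformly over $\mu \in [b]$ and over $m$ with $\lfloor 3k/(8b)\rfloor \leq m \leq k$, the single quantity $(m/k)^{\delta_1} \, d_\mu^{\beta - \alpha} \, e_m\big(\id : \ell_p^{b_\mu}(\ell_q^{d_\mu}) \to \ell_r^{b_\mu}(\ell_u^{d_\mu})\big)$ by a constant multiple of $k^{-(\alpha - \beta + \delta_2)}$.

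The heart of the argument is the pointwise estimate
$$
 e_m\big(\id : \ell_p^{b_\mu}(\ell_q^{d_\mu}) \to \ell_r^{b_\mu}(\ell_u^{d_\mu})\big) \lesssim_{p,q,r,u} m^{-\delta_1} \, d_\mu^{\delta_1 - \delta_2}, \qquad m \in \N, \ \mu \in [b],
$$
which I would establish by going through the $m$-ranges in Theorem~\ref{sec:mixed:res:entropy_matching_bounds}: the cases $m \lesssim \log(b_\mu d_\mu)$ and $m \gtrsim b_\mu d_\mu$ are easy (using $e_m \simeq 1$, resp.\ the exponential decay), and in the intermediate range the bound is governed by case~(i). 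On $d_\mu \leq m \leq b_\mu d_\mu$ it holds with equality up to constants, since there $e_m \simeq (d_\mu/m)^{\delta_1} d_\mu^{-\delta_2}$ (and $e_m \lesssim (d_\mu/m)^{\delta_1}$ when $q = u$, because then $\log(e b_\mu / m) \lesssim d_\mu$). On the other ranges I would substitute $t = m/d_\mu \in (0,1]$ when $m \leq d_\mu$ and $t = m/(b_\mu d_\mu) \geq 1$ when $m \geq b_\mu d_\mu$, and use that $\delta_1 > \delta_2$ makes the one-variable functions $t \mapsto t^{\delta_1 - \delta_2}\big(\log(e/t)\big)^{\delta_2}$ on $(0,1]$ and $t \mapsto t^{\delta_1} 2^{-t}$ on $[1,\infty)$ bounded; the hypothesis $d_\mu \gtrsim b_\mu$ keeps every logarithmic factor that appears (such as $\log(e b_\mu / m)$ or $\log(b_\mu d_\mu)$) of size $O(\log d_\mu)$, and a power of $\log d_\mu$ is in turn $\lesssim_{\delta_1 - \delta_2} d_\mu^{\delta_1 - \delta_2}$ precisely because $\delta_1 > \delta_2$.

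Granting the pointwise estimate, the rest is arithmetic: for all admissible $\mu$ and $m$,
$$
 (m/k)^{\delta_1} \, d_\mu^{\beta - \alpha} \, e_m\big(\id : \ell_p^{b_\mu}(\ell_q^{d_\mu}) \to \ell_r^{b_\mu}(\ell_u^{d_\mu})\big) \lesssim k^{-\delta_1} \, d_\mu^{(\delta_1 - \delta_2) - (\alpha - \beta)} \leq k^{-\delta_1} \, k^{(\delta_1 - \delta_2) - (\alpha - \beta)} = k^{-(\alpha - \beta + \delta_2)},
$$
where the middle step uses $(\delta_1 - \delta_2) - (\alpha - \beta) \geq 0$ together with $d_\mu \leq \max_\nu d_\nu \leq k$. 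Taking the maximum over $\mu$ and $m$ and feeding the result into Proposition~\ref{lem:ub_by_dyadic_covering} yields $e_{k+1}\big(\id : \ell_p^b(X_\mu) \to \ell_r^b(Y_\mu)\big) \lesssim k^{-(\alpha - \beta + 1/q - 1/u)}$, and relabeling $k+1$ as $k$ (which changes the implicit constant by at most a bounded factor) gives the assertion. I expect the only genuine obstacle to be the bookkeeping in the pointwise estimate: one must verify, across every parameter regime of Theorem~\ref{sec:mixed:res:entropy_matching_bounds}, that the logarithmic corrections — present both in the middle $m$-ranges and near the transition $m \simeq \log(b_\mu d_\mu)$ — never overtake the polynomial gain $d_\mu^{\delta_1 - \delta_2}$; once that is checked, everything collapses to the one-line computation above.
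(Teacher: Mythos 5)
Your proposal is correct and follows essentially the same route as the paper: reduce via the dyadic-grid covering bound of Proposition~\ref{lem:ub_by_dyadic_covering}, factor the weights $d_\mu^{\alpha},d_\mu^{\beta}$ out of the inner entropy numbers, bound $e_m(\id:\ell_p^{b_\mu}(\ell_q^{d_\mu})\to\ell_r^{b_\mu}(\ell_u^{d_\mu}))$ by $m^{-(1/p-1/r)}d_\mu^{(1/p-1/r)-(1/q-1/u)}$ using Theorem~\ref{sec:mixed:res:entropy_matching_bounds}(i), and finish by replacing $d_\mu$ with $k$ thanks to $d_\mu\leq k$ and $\alpha-\beta\leq 1/p-1/r-(1/q-1/u)$. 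The only difference is cosmetic: you package the inner bound as a single pointwise estimate valid for all $m$, where the paper splits the maximum over $\ell$ at $\ell=d_\mu$.
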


\begin{proof} We use Theorem \ref{res:edne}, in particular the upper bound in Proposition \ref{lem:ub_by_dyadic_covering}. Since 	$k\geq 8b$ we obtain 
\begin{eqnarray*}
	e_k(\id:\ell_p^b(X_\mu) \to \ell_r(Y_\mu))  &\lesssim & 
	\max\limits_{\mu = 1,...,b}\max\limits_{1\leq \ell \leq k} \Big(\frac{\ell}{k}\Big)^{1/p-1/r}e_{\ell}(\id:X_{\mu} \to Y_{\mu})\\
	&= & \max\limits_{\mu = 1,...,b}\max\limits_{1\leq \ell \leq k} \Big(\frac{\ell}{k}\Big)^{1/p-1/r}d_\mu^{-(\alpha-\beta)}e_{\ell}(\id:\ell_p^{b_\mu}(\ell_q^{d_{\mu}}) \to \ell_r^{b_\mu}(\ell_u^{d_{\mu}}))\\
	&\lesssim & \max\limits_{\mu = 1,...,b}\Big\{\max\limits_{1\leq \ell\leq d_{\mu}}\Big[\cdots\Big],
	\max\limits_{d_{\mu}\leq \ell\leq k}\Big[\cdots \Big]\Big\}\,.
\end{eqnarray*}
Let us evaluate the first~$\max[\cdots]$. With Theorem \ref{sec:mixed:res:entropy_matching_bounds}, (i.b), (i.c) we have 
\begin{equation}\nonumber
\begin{split}
&\max\limits_{1\leq \ell \leq d_{\mu}} \Big(\frac{\ell}{k}\Big)^{1/p-1/r}d_\mu^{-(\alpha-\beta)}e_{\ell}(\id:\ell_p^{b_\mu}(\ell_q^{d_{\mu}}) \to \ell_r^{b_\mu}(\ell_u^{d_{\mu}}))\\
&\lesssim \Big(\frac{\ell}{k}\Big)^{1/p-1/r}d_\mu^{-(\alpha-\beta)}\Big(\frac{\log(ed_{\mu}/\ell)}{\ell}\Big)^{1/q-1/u}\,.
\end{split}
\end{equation}
Because of~$1/p-1/r > 1/q-1/u$ the maximum is attained for~$\ell = d_{\mu}$, which leads to 
$$
	\max\limits_{1\leq \ell\leq d_{\mu}}\Big[\cdots\Big] \lesssim 
	d_{\mu}^{-(\alpha-\beta)}d_{\mu}^{1/p-1/r}d_{\mu}^{-(1/q-1/u)}k^{-(1/p-1/r)}\,.
$$
Let us discuss the second~$\max[\cdots]$. Using again Proposition~\ref{lem:ub_by_dyadic_covering} we obtain 
$$
	\max\limits_{d_{\mu}\leq \ell \leq k}\Big[\cdots\Big] \lesssim
	k^{-(1/p-1/r)}d_{\mu}^{-(\alpha-\beta)}d_{\mu}^{1/p-1/r}d_{\mu}^{-(1/q-1/u)}\,.
$$
Due to our assumption the exponent for~$d_{\mu}$ is positive in both cases. Since~$k \geq d_\mu$ we may replace~$d_\mu$ by~$k$ to increase the right-hand side. This leads to
$$
		e_k(\id:\ell_p^b(X_\mu) \to \ell_r(Y_\mu)) \lesssim 
		\Big(\frac{1}{k}\Big)^{\alpha-\beta+1/q-1/u}\,.
$$
\end{proof}

We are now aiming for a similar relation for small~$k$. 

\begin{proposition}\label{prop15} Let~$\alpha-\beta>0$ and~$1/p-1/r>1/q-1/u \geq 0$. Then we have for 
$8b \leq k \leq \min\limits_{\mu =1,...,b} d_{\mu}$ the estimate
$$
	e_k(\id:\ell_p^b(X_\mu)\to \ell_r(Y_\mu)) \lesssim \Big(\frac{1}{k}\Big)^{\alpha-\beta+1/q-1/u}\,.	
$$
\end{proposition}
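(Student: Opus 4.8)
The plan is to argue exactly as in the proof of Proposition~\ref{prop14}: I would invoke the dyadic covering estimate of Proposition~\ref{lem:ub_by_dyadic_covering} (applicable since $k \geq 8b$) with $X_i = X_\mu$ and $Y_i = Y_\mu$. As there, this reduces the assertion to proving
\[
 \max_{\mu=1,\dots,b}\ \max_{1\leq \ell \leq k}\ \Big(\frac{\ell}{k}\Big)^{1/p-1/r} d_\mu^{-(\alpha-\beta)}\, e_\ell\big(\id:\ell_p^{b_\mu}(\ell_q^{d_\mu}) \to \ell_r^{b_\mu}(\ell_u^{d_\mu})\big) \lesssim \Big(\frac{1}{k}\Big)^{\alpha-\beta+1/q-1/u}.
\]
The structural difference to Proposition~\ref{prop14} is the hypothesis $k \leq \min_\mu d_\mu$: now every admissible index satisfies $\ell \leq k \leq d_\mu$, so for each $\mu$ only the range $\ell \leq d_\mu$ of Theorem~\ref{sec:mixed:res:entropy_matching_bounds}(i) will enter, and the inner maximum is truncated at $\ell = k$ rather than at $\ell = d_\mu$.

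Next, I would fix $\mu$ and write $D = d_\mu$. For $1 \leq \ell \leq \log(b_\mu D)$ one has $e_\ell(\id:\ell_p^{b_\mu}(\ell_q^{d_\mu}) \to \ell_r^{b_\mu}(\ell_u^{d_\mu})) \simeq 1$, whereas for $\log(b_\mu D) \leq \ell \leq D$ all three subcases of Theorem~\ref{sec:mixed:res:entropy_matching_bounds}(i) give
\[
 e_\ell\big(\id:\ell_p^{b_\mu}(\ell_q^{d_\mu}) \to \ell_r^{b_\mu}(\ell_u^{d_\mu})\big) \simeq \Big(\frac{\log(eD/\ell)}{\ell}\Big)^{1/q-1/u}
\]
--- in case (i.a) this reads $\simeq 1$ since $1/q - 1/u = 0$; in case (i.c) one uses that $d_\mu \gtrsim b_\mu$ forces $b_\mu \simeq D$ and that $x^{1/p-1/r} \lesssim x^{1/q-1/u}$ whenever $x$ stays bounded, which holds for $x = \log(eD/\ell)/\ell$ on the range in question. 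Hence, on $\log(b_\mu D) \leq \ell \leq k$ the $\ell$-th term is comparable to $k^{-(1/p-1/r)}\, d_\mu^{-(\alpha-\beta)}\, \ell^{(1/p-1/r)-(1/q-1/u)}\, (\log(eD/\ell))^{1/q-1/u}$, and since $(1/p-1/r)-(1/q-1/u) > 0$, the $\ell$-dependent factor $\ell^{(1/p-1/r)-(1/q-1/u)}(\log(eD/\ell))^{1/q-1/u}$ grows monotonically in $\ell$ up to a constant depending only on $p,q,r,u$ (the polynomial factor outpacing the slow logarithmic one); consequently its maximum over $1 \leq \ell \leq k$ is comparable to its value at $\ell = k$ provided $k \geq \log(b_\mu D)$.

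It then remains to bound this value. If $k \geq \log(b_\mu D)$, setting $\ell = k$ gives $d_\mu^{-(\alpha-\beta)}k^{-(1/q-1/u)}(\log(eD/k))^{1/q-1/u}$, and with $t := D/k \geq 1$ the boundedness of $t \mapsto t^{-(\alpha-\beta)}(\log(et))^{1/q-1/u}$ on $[1,\infty)$ (valid since $\alpha-\beta>0$) yields the desired $\lesssim k^{-(\alpha-\beta)}k^{-(1/q-1/u)}$; the terms with $\ell \leq \log(b_\mu D)$ contribute at most $(\log(b_\mu D)/k)^{1/p-1/r}d_\mu^{-(\alpha-\beta)}$, and since at $\ell = \log(b_\mu D)$ the factor $(\log(eD/\ell)/\ell)^{1/q-1/u}$ is $\simeq 1$ because $b_\mu \lesssim D$, these are dominated by the second-regime term at $\ell = \log(b_\mu D)$ and hence by the one at $\ell = k$. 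If instead $k < \log(b_\mu D)$, then the whole range $1\leq\ell\leq k$ lies in $\{\ell \leq \log(b_\mu D)\}$, the maximum of $(\ell/k)^{1/p-1/r}d_\mu^{-(\alpha-\beta)}$ over it equals $d_\mu^{-(\alpha-\beta)}$, and $k < \log(b_\mu D) \lesssim \log D$ forces $D \gtrsim e^{ck}$ for some $c>0$, whence $d_\mu^{-(\alpha-\beta)} \lesssim e^{-c(\alpha-\beta)k} \lesssim k^{-(\alpha-\beta+1/q-1/u)}$ since exponential decay dominates polynomial decay. Maximising over $\mu$ finishes the proof, the constants depending only on $p,q,r,u,\alpha,\beta$ and the implicit constant in $d_\mu \gtrsim b_\mu$.

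The main obstacle will be the bookkeeping near $\ell \simeq \log(b_\mu d_\mu)$, and in particular the case $k < \log(b_\mu d_\mu)$, which cannot occur in Proposition~\ref{prop14} since there the decisive maximum sits at $\ell = d_\mu \gg \log(b_\mu d_\mu)$. In that regime the inner entropy numbers are still of order $1$ and provide no decay, so the extra factor $k^{-(1/q-1/u)}$ demanded by the target estimate has to be extracted entirely from the fact that $d_\mu$ is then necessarily exponentially large in $k$; everything else is a routine adaptation of the argument for Proposition~\ref{prop14}.
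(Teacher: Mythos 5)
Your proposal is correct and follows essentially the same route as the paper: apply the dyadic-covering bound of Proposition~\ref{lem:ub_by_dyadic_covering}, insert the matching bounds of Theorem~\ref{sec:mixed:res:entropy_matching_bounds} for the inner entropy numbers, observe that the maximum over $\ell$ sits (up to constants) at $\ell=k$, and absorb the $d_\mu$-dependence via $k\le d_\mu$, $\alpha-\beta>0$ and the boundedness of $t\mapsto t^{-(\alpha-\beta)}(\log(et))^{1/q-1/u}$ on $[1,\infty)$. Your additional bookkeeping (cases (i.a)/(i.c), the regime $\ell\le\log(b_\mu d_\mu)$, and $k<\log(b_\mu d_\mu)$) is correct but not strictly needed: the paper sidesteps it by using the (i.b)-type bound $(\log(ed_\mu/\ell)/\ell)^{1/q-1/u}$ uniformly for $1\le\ell\le k$, which is legitimate since $b_\mu\lesssim d_\mu$ makes this expression bounded below by a constant when $\ell\le\log(b_\mu d_\mu)$.
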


\begin{proof} Again we use Theorem \ref{res:edne}, in particular the upper bound in Proposition \ref{lem:ub_by_dyadic_covering}. This gives
\begin{equation}
  \begin{split}
	e_k &\lesssim \max\limits_{\mu} \max\limits_{1\leq \ell \leq k}
	\Big(\frac{\ell}{k}\Big)^{1/p-1/r}e_{\ell}(\id:X_\mu \to Y_{\mu})\\
	&=\max\limits_{\mu} \max\limits_{1\leq \ell \leq k}
	\Big(\frac{\ell}{k}\Big)^{1/p-1/r}d^{-(\alpha-\beta)}
	e_{\ell}(\id:\ell_p^{b_\mu}(\ell_q^{d_{\mu}})\to\ell_r^{b_\mu}(\ell_u^{d_{\mu}}))\\
	&\lesssim \max\limits_{\mu} \max\limits_{1\leq \ell \leq k}
	\Big(\frac{\ell}{k}\Big)^{1/p-1/r}d_\mu^{-(\alpha-\beta)}
	\Big(\frac{\log(ed_\mu/\ell)}{\ell}\Big)^{1/q-1/u}\,,
  \end{split}
\end{equation}
where we used once again Theorem \ref{sec:mixed:res:entropy_matching_bounds}, (i.b). Clearly, we get
\begin{equation}
 \begin{split}
	e_k &\lesssim \max\limits_{\mu} \max\limits_{1\leq \ell \leq k}
	\Big(\frac{\ell}{k}\Big)^{1/p-1/r}k^{-(\alpha-\beta)}\Big(\frac{k}{d_\mu}\Big)^{\alpha-\beta}\Big(\frac{\log(ed_\mu/\ell)}{\ell}\Big)^{1/q-1/u}\\
	&\lesssim k^{-(\alpha-\beta)}k^{-(1/q-1/u)}
	\Big(\frac{k}{d_\mu}\Big)^{\alpha-\beta}\big(\log(ed_\mu/k)\big)^{1/q-1/u}\,.
\end{split}
\end{equation}
Since the function~$x \mapsto x^{-(\alpha-\beta)}[\log(ex)]^{(1/q-1/u)}$ is bounded on~$[1,\infty)$ we conclude with 
$$
		e_k \lesssim k^{-(\alpha-\beta+1/q-1/u)}\,.
$$
\end{proof}

\section{Polynomial decay of entropy numbers for \\multivariate function space embeddings}\label{sec:besov}
We come to the main subject of this paper, improved upper bounds for entropy numbers of function space embeddings~\eqref{emb2} in regimes of small mixed smoothness.

\subsection{Function spaces of dominating mixed smoothness}
Besov and Triebel-Lizorkin spaces of mixed smoothness are typically defined via a dyadic decomposition on the Fourier side. 
Let $\{\varphi_{j}\}_{j\in \N_0^n}$ be the standard tensorized 
dyadic decomposition of unity, see \cite{schmeisser/triebel:1987:fourier_analysis} and \cite{vybiral:2006:diss}. We further denote by $S'(\R^n)$ the space of tempered distributions and by $D'(\Omega)$ the space of distributions (dual space of $D(\Omega)$, which represents the space of test functions on the bounded domain $\Omega\subset \R^n$). The \emph{Besov space of dominating mixed smoothness}~$S^r_{p,q}B(\R^n)$ with smoothness parameter~$r > 0$ and integrability parameters~$0<p,q\leq \infty$ is given by 
$$
    S^r_{p,q}B(\R^n) := \Big\{f\in S'(\R^n)~:~\|f\|_{S^r_{p,q}B}:=\Big(\sum\limits_{j \in \N_0^n}
    2^{rq\|j\|_1}\|\mathcal F^{-1}[\varphi_{j}\mathcal F f]\|_p^q\Big)^{1/q} < \infty\Big\}\,,
$$
with the usual modification in the case $q=\infty$. The Triebel-Lizorkin space of dominating mixed smoothness $S^r_{p,q}F(\mathbb{R}^n)$ is given by $(p<\infty)$
$$
    S^r_{p,q}F(\R^n) := \Big\{f\in S'(\R^n)~:~\|f\|_{S^r_{p,q}F}:=\Big\|\Big(\sum\limits_{j \in \N_0^n}
    2^{rq\|j\|_1}|\mathcal F^{-1}[\varphi_{j}\mathcal F f](\cdot)|^q\Big)^{1/q}\Big\|_p < \infty\Big\}\,.
$$
The latter scale of spaces contains the classical $L_p$ spaces and Sobolev spaces with dominating mixed smoothness if $1<p<\infty$ and $q=2$, namely we have~$S^0_{p,2}F(\R^n) = L_p(\R^n)$ and~$S^k_{p,2}F(\R^n) = S^k_pW(\R^n)$ for $k\in \N$. Note that we also have~$S^r_{p,p}B(\R^n) = S^r_{p,p}F(\R^n)$ for all~$0<p<\infty$ and $r\in \re$. Though we have the embedding
\begin{equation}\label{emb}
  \begin{split}	
   S^{r_0}_{p_0,q_0}A(\R^n) &\hookrightarrow S^{r_1}_{p_1,q_1}A^{\dag}(\R^n), \quad A,A^{\dag} \in \{B,F\},
  \end{split}	
\end{equation}
for $p_0 \leq p_1$ and $r_0-r_1>1/p_0-1/p_1$, see \cite[Chapt.\ 2]{schmeisser/triebel:1987:fourier_analysis}, the embedding~\eqref{emb} is never compact. Hence, the entropy numbers of embeddings between function spaces defined on the whole $\R^n$ do not converge to zero. We restrict our considerations to spaces on bounded domains $\Omega$. Let $\Omega$ be an arbitrary bounded domain in $\R^n$. Then, we define 
$S^r_{p,q}A(\Omega)$ for $A\in \{B,F\}$ as
$$
    S^r_{p,q}A(\Omega) := \{f\in D'(\Omega)~:~\exists g\in S^r_{p,q}A(\R^n) \text{ such that } g|_{\Omega} = f\}\,
$$
and its (quasi-)norm is given by $\|f\|_{S^r_{p,q}A(\Omega)}:=\inf_{g|_{\Omega} = f} \|g\|_{S^r_{p,q}A}$\,. 
The embedding \eqref{emb} transfers to the bounded domain $\Omega$ and is compact such that the entropy numbers 
decay and converge to zero.

\subsection{Sequence spaces}
The key to establishing the decay rate of entropy numbers for the embedding \eqref{emb2} is a discretization technique which has been developed over the years by several authors beginning with Maiorov \cite{Ma75}. Later, when wavelet isomorphisms have been established, this technique has been refined by Lemarie, Meyer, Triebel and many others. In \cite[Thm.\ 2.10]{vybiral:2006:diss} Vyb{\'i}ral gave the necessary modifications to deal with the above defined $S^r_{p,q}A(\Omega)$ spaces in detail. The main advantage of this approach is to transfer questions for function space embeddings to certain sequence spaces. 

Using sufficiently smooth wavelets with sufficiently many vanishing moments (and the notation from \cite{vybiral:2006:diss}) the mapping
\begin{equation}\label{wave_iso}
    f \mapsto \lambda_{j,m}(f):=\langle f, \psi_{j,m} \rangle\quad,\quad j\in \N_0^n, 
    m \in \Z^n\,,
\end{equation}
represents a sequence spaces isomorphism between $S^r_{p,q}B(\R^n), S^r_{p,q}F(\R^n)$ and
\begin{equation}\nonumber
  \begin{split}
    s^r_{p,q}b &:= \Big\{\lambda=\{\lambda_{j,m}\}_{j, m}~:~
    \|\lambda\|_{s^r_{p,q}b}:=\Big[\sum\limits_{j\in \N_0^n} 2^{(r-1/p)q\|j\|_1}
    \Big(\sum\limits_{m\in \Z^n}|\lambda_{j,m}|^p\Big)^{q/p}\Big]^{1/q}<\infty\Big\},\\
    s^r_{p,q}f &:= \Big\{\lambda=\{\lambda_{j,m}\}_{j, m}~:~
    \|\lambda\|_{s^r_{p,q}f} := \Big\|\Big(\sum_{j \in \N_0^n} 2^{\|j\|_1rq}\Big|\sum_{m \in \Z}\lambda_{j, m}\chi_{j,m}(\cdot)\Big|^q\Big)^{1/q}\Big\|_p<\infty\Big\}\,,
  \end{split}
\end{equation}
respectively, with the usual modification in the case $\max\{p,q\} = \infty$. Here we denote 
for $j \in \N_0^n$ and $m \in \Z^n$
$$Q_{j, m} := \prod\limits_{i=1}^n 2^{-j_i} [m_i-1,m_i+1]$$
and 
$$A_{j}^\Omega = \{m \in \Z^n: Q_{j, m} \cap \Omega \neq \emptyset \}.$$ 
Further, $\chi_{j,m}$ denotes the characteristic function of $Q_{j,m}$.
Consider the sequence spaces
\begin{equation}
\begin{split}
	s_{p,q}^{r}b(\Omega) &:= \{ \lambda = (\lambda_{j, m})_{j \in \N_0^n, m \in A_{j}^\Omega} : \|\lambda\|_{s_{p,q}^{r}			b(\Omega)} < \infty  \}\,,\\
	s_{p,q}^rf(\Omega) &:= \{ \lambda = (\lambda_{j, m})_{j \in \N_0^n, m \in A_{j}^\Omega} : \|\lambda\|_{s_{p,q}^{r}			f(\Omega)} < \infty  \}
\end{split}
\end{equation}
with (quasi-)norms given by
\begin{equation} 
 \begin{split}
 \|\lambda\|_{s_{p,q}^{r}b(\Omega)} &:= \Big[\sum_{j \in \N_0^n} 2^{\|j\|_1(r-1/p)q} \Big( \sum_{m \in A_{j}^\Omega} |\lambda_{j, m}|^p \Big)^{q/p} \Big]^{1/q},\\
 \|\lambda\|_{s_{p,q}^{r}f(\Omega)} &:= \Big\|\Big(\sum_{j \in \N_0^n} 2^{\|j\|_1rq} \Big|\sum_{m \in A_{j}^\Omega}\lambda_{j, m}\chi_{j,m}(\cdot)\Big|^q\Big)^{1/q}\Big\|_p\,.
\end{split}
\end{equation}
Let us also define the following building blocks for $\mu \in \N_0$ fixed 
\begin{equation}\label{block}
  \begin{split}
	s_{p,q}^{r}b(\Omega)_{\mu} &:= \Big\{ \lambda: \|\lambda\|_{s_{p,q}^{r}b(\Omega)_\mu}
	:= \Big[\sum_{\|j\|_1 = \mu} 2^{\|j\|_1(r-1/p)q} \Big( \sum_{m \in A_{j}^\Omega} |\lambda_{j, m}|^p \Big)^{q/p} \Big]^{1/q}<\infty\Big\}\,,\\
s_{p,q}^{r}f(\Omega)_{\mu} &:= \Big\{ \lambda: \|\lambda\|_{s_{p,q}^{r}f(\Omega)_\mu}
	:= \Big\|\Big(\sum_{\|j\|_1 = \mu} 2^{\|j\|_1rq} \Big|\sum_{m \in A_{j}^\Omega}\lambda_{j, m}\chi_{j,m}(\cdot)\Big|^q\Big)^{1/q}\Big\|_p < \infty\Big\}\,.
  \end{split}
\end{equation}
Clearly, for $\mu \in \N_0$ we have $$\sharp \{j \in \N_0^n: \|j\|_1 = \mu \} = {\mu + n -1 \choose \mu} \simeq_d (\mu+1)^{n-1}$$ and $\sharp A_{j}^\Omega \simeq 2^{\|j\|_1} = 2^{\mu}$. Consider 
$$
	\id:s^{r_0}_{p_0,q_0}a(\Omega) \to s^{r_1}_{p_1,q_1}a^{\dag}(\Omega)
$$
for $r_0-r_1 > 1/p_0-1/p_1$ such that the embedding is compact.
Defining the building blocks
\[ 
 (\id_\mu \lambda)_{j, m} := \begin{cases}
 \lambda_{j, m} &: \|j\|_1 = \mu, m\in A_j^{\Omega}\\
 0 &: \text{ otherwise, }
 \end{cases}
\]
we have $\id = \sum_{\mu=0}^{\infty} \id_{\mu}$.
Of course, the identity 
$$
   e_k(\id_\mu:s^{r_0}_{p_0,q_0}a(\Omega) \to s^{r_1}_{p_1,q_1}a^{\dag}(\Omega)) = 
   e_k(\id'_\mu:s^{r_0}_{p_0,q_0}a(\Omega)_\mu \to s^{r_1}_{p_1,q_1}a^{\dag}(\Omega)_{\mu})
$$
holds true, where $\id_{\mu}'$ denotes the corresponding embedding operator on the respective block \eqref{block}. Although these operators have the same mapping properties we use different notations to formally distinguish between them. 
If $a = a^{\dag} = b$ we also have, for a finite index set $I$, that 
\begin{equation}\label{ek:lp(lq)}
  \begin{split}
	&e_k\Big(\sum\limits_{\mu \in I}\id_{\mu} : 
	s^{r_0}_{p_0,q_0}b(\Omega) \to s^{r_1}_{p_1,q_1}b(\Omega)\Big)\\ 
	&~~~~= e_k\Big(
	\id':\ell_{q_0}\Big(\big(s^{r_0}_{p_0,q_0}b(\Omega)_\mu\big)_{\mu \in I}\Big) \to 
	\ell_{q_1}\Big(\big(s^{r_1}_{p_1,q_1}b(\Omega)_{\mu}\big)_{\mu \in I}\Big)\Big)\\
	&~~~~\simeq e_k\Big(
	\id'':\ell_{q_0}\Big((X_\mu)_{\mu \in I}\Big) \to 
	\ell_{q_1}\Big((Y_{\mu})_{\mu \in I}\Big)\Big)\,,
  \end{split}	
\end{equation}
where $X_{\mu} = 2^{\mu(r_0-1/p_0)}\ell_{q_0}^{(\mu+1)^{n-1}}(\ell_{p_0}^{2^\mu})$ and $Y_{\mu} = 2^{\mu(r_1-1/p_1)}\ell_{q_1}^{(\mu+1)^{n-1}}(\ell_{p_1}^{2^\mu})$, which means $d_{\mu} = 2^{\mu}$ and $b_\mu = (\mu+1)^{n-1}$ in the notation of \eqref{lp(lq)}. In particular, we have $b_{\mu} \lesssim d_{\mu}$\,.

\subsection{Entropy numbers}
As a consequence of the boundedness of certain restriction and extension operators, see \cite[4.5]{vybiral:2006:diss},
the investigation of entropy numbers of Besov space embeddings can be shifted to the sequences spaces side. We formulate our first result in the framework of sequence spaces, which improves the upper bound. More specifically, we prove that the lower bound in \eqref{vyb} is sharp in the case that $0 \leq 1/p_0-1/p_1<r_0-r_1 \leq 1/q_0-1/q_1$, which also includes the limiting case $r_0-r_1 = 1/q_0 - 1/q_1$\,. What is known in this direction is summarized in Remark \ref{rem_vyb} below. 

\begin{proposition}\label{thm:besov:res1} Let $\Omega$ be a bounded domain and $0<q_0 < q_1 \leq \infty$, $0<p_0\leq p_1\leq \infty$ such that 
$$
    1/p_0-1/p_1<r_0-r_1 \leq 1/q_0-1/q_1\,.
$$
Then we have  
\begin{equation}\nonumber
      e_m(\id:s^{r_0}_{p_0,q_0}b(\Omega) \to s^{r_1}_{p_1,q_1}b(\Omega)) \simeq m^{-(r_0-r_1)}\quad,\quad m\in \N\,.
\end{equation}
\end{proposition}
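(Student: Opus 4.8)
\noindent\textit{Plan of proof.} The plan is to work entirely on the sequence space side, which the previous subsection has prepared: by the wavelet isomorphism it suffices to bound $e_m$ for $\id=\sum_{\mu\ge0}\id_\mu\colon s^{r_0}_{p_0,q_0}b(\Omega)\to s^{r_1}_{p_1,q_1}b(\Omega)$, where by \eqref{ek:lp(lq)} the level block $\id_\mu$ corresponds --- up to constants depending only on $n$ --- to $\id\colon X_\mu\to Y_\mu$ with $X_\mu=2^{\mu(r_0-1/p_0)}\ell_{q_0}^{b_\mu}(\ell_{p_0}^{d_\mu})$, $Y_\mu=2^{\mu(r_1-1/p_1)}\ell_{q_1}^{b_\mu}(\ell_{p_1}^{d_\mu})$, $d_\mu=2^\mu$, $b_\mu=(\mu+1)^{n-1}$. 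The matching lower bound $e_m\gtrsim m^{-(r_0-r_1)}$ is the one recorded in \eqref{vyb} (see also Remark~\ref{rem_vyb}); it also follows by restricting $\id$ to the single level $\mu=\lfloor\log_2 m\rfloor$ (so $d_\mu\simeq m$ and $b_\mu d_\mu\gg m$), reading $e_m(\id\colon\ell_{q_0}^{b_\mu}(\ell_{p_0}^{d_\mu})\to\ell_{q_1}^{b_\mu}(\ell_{p_1}^{d_\mu}))$ off Theorem~\ref{sec:mixed:res:entropy_matching_bounds} (parts (i.a)/(i.b)) and multiplying by the scaling factor $2^{-\mu[(r_0-r_1)-(1/p_0-1/p_1)]}$: the exponents combine to $m^{-(r_0-r_1)}$. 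Hence the real content is the upper bound.

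For the upper bound I would fix a large $m$, put $L:=\lfloor\log_2 m\rfloor-3$ (so $2^L\simeq m$ with $2^L\le m/8$), and split $\id=\id^{\mathrm{low}}+\id^{\mathrm{high}}$ with $\id^{\mathrm{low}}:=\sum_{\mu=0}^{L}\id_\mu$ and $\id^{\mathrm{high}}:=\sum_{\mu>L}\id_\mu$. For $\id^{\mathrm{low}}$ the plan is to apply Proposition~\ref{prop14} under the dictionary $p=q_0$, $r=q_1$, $q=p_0$, $u=p_1$, $\alpha=r_0-1/p_0$, $\beta=r_1-1/p_1$: its hypotheses $0\le 1/q-1/u<1/p-1/r$ and $\alpha-\beta\le 1/p-1/r-(1/q-1/u)$ translate exactly into $0\le 1/p_0-1/p_1<1/q_0-1/q_1$ and $r_0-r_1\le 1/q_0-1/q_1$, which hold by assumption (the latter covering the limiting equality), and its conclusion reads $e_k(\id^{\mathrm{low}})\lesssim k^{-(\alpha-\beta+1/q-1/u)}=k^{-(r_0-r_1)}$ once $k\ge\max\{8(L+1),\,2^L\}$. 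Taking $k=\lceil m/4\rceil$ (admissible since $2^L\le m/8$ and $L\simeq\log m$) yields $e_{\lceil m/4\rceil}(\id^{\mathrm{low}})\lesssim m^{-(r_0-r_1)}$. The point to stress is that $\id^{\mathrm{low}}$ has dimension $\simeq_n 2^L L^{n-1}\simeq m(\log m)^{n-1}$, so a purely volumetric estimate would here cost a spurious $\log$; it is precisely the Edmunds--Netrusov dyadic covering behind Propositions~\ref{lem:ub_by_dyadic_covering}/\ref{prop14} that keeps the bound log-free at $k\simeq m$. (The finitely many initial levels on which $b_\mu>d_\mu$ can be peeled off as a fixed finite-dimensional operator, whose entropy numbers decay geometrically and are thus $\lesssim_n m^{-(r_0-r_1)}$ at $k\simeq m$; I suppress this routine point.)

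For $\id^{\mathrm{high}}$ the resource is the ``compactness surplus'' $c:=(r_0-r_1)-(1/p_0-1/p_1)>0$. Each block has $\|\id_\mu\|=2^{-\mu c}$, since $\ell_{q_0}^{b_\mu}(\ell_{p_0}^{d_\mu})\hookrightarrow\ell_{q_1}^{b_\mu}(\ell_{p_1}^{d_\mu})$ has norm $1$. I would regroup $\{\mu>L\}$ dyadically as $J_\ell=\{L2^\ell+1,\dots,L2^{\ell+1}\}$ for $\ell\ge0$ and set $S_\ell:=\sum_{\mu\in J_\ell}\id_\mu$; since the $S_\ell$ act on disjoint blocks, $\|S_\ell\|=\max_{\mu\in J_\ell}2^{-\mu c}=2^{-c(L2^\ell+1)}$, which for $\ell\ge1$ is $\le 2^{-cL2^\ell}\simeq m^{-c2^\ell}$. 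Fix the threshold $\ell_0:=\lceil\log_2((r_0-r_1)/c)\rceil$. For $\ell\ge\ell_0$, where $c2^\ell\ge r_0-r_1$, I would use only $e_1(S_\ell)=\|S_\ell\|$; letting $\vartheta\in(0,1]$ be such that $s^{r_1}_{p_1,q_1}b(\Omega)$ is $\vartheta$-normed,
\[ \sum_{\ell\ge\ell_0} e_1(S_\ell)^\vartheta\;\lesssim\;\sum_{\ell\ge\ell_0} m^{-c2^\ell\vartheta}\;\lesssim\; m^{-(r_0-r_1)\vartheta}. \]
For the $\lesssim_{p_0,p_1,r_0,r_1}1$ remaining blocks $0\le\ell<\ell_0$ I would invoke Proposition~\ref{prop15} (its hypotheses $\alpha-\beta>0$ and $1/p-1/r>1/q-1/u\ge0$ hold) with $k_\ell:=\lceil m/(8\ell_0)\rceil$: this $k_\ell$ lies in the admissibility window $8\sharp J_\ell\le k_\ell\le\min_{\mu\in J_\ell}d_\mu$ --- indeed $\sharp J_\ell=L2^\ell\lesssim_{p,r}\log m$ while $\min_{\mu\in J_\ell}d_\mu\ge 2^{L+1}\gtrsim m$ --- so that $e_{k_\ell}(S_\ell)\lesssim k_\ell^{-(r_0-r_1)}\simeq m^{-(r_0-r_1)}$. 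Iterating \eqref{ek(T1+T2)} over all $\ell$ (with a routine $N\to\infty$ limiting argument, legitimate because $\|\sum_{\ell>N}S_\ell\|\to0$ and $k_\ell=1$ for $\ell\ge\ell_0$, so that the total index $1+\sum_\ell(k_\ell-1)$ stays $\le m/2$) yields $e_{\lceil m/2\rceil}(\id^{\mathrm{high}})\lesssim m^{-(r_0-r_1)}$. A final use of \eqref{ek(T1+T2)} for $\id^{\mathrm{low}}+\id^{\mathrm{high}}$ (indices $\lceil m/4\rceil$ and $\lceil m/2\rceil$, total $\le m$) then gives $e_m(\id)\lesssim m^{-(r_0-r_1)}$, as desired.

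The step I expect to be the main obstacle is the high-frequency part: it has to come out log-free, which constrains the regrouping from two sides. It must be coarse enough that only $O_{p,r}(1)$ blocks require the refined Proposition~\ref{prop15} estimate (the rest being absorbed by the doubly-exponential decay of $\|S_\ell\|$ in $\ell$), yet fine enough that each of those few blocks still satisfies the window $8\sharp J_\ell\le k_\ell\le\min_{\mu\in J_\ell}d_\mu$ of Proposition~\ref{prop15} when $k_\ell$ is merely a constant fraction of $m$. Once Propositions~\ref{prop14} and~\ref{prop15} are granted, everything else --- translating the hypotheses, the exponent bookkeeping $\alpha-\beta+1/q-1/u=r_0-r_1$, and assembling via \eqref{ek(T1+T2)} --- is routine.
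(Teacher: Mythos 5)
Your proposal is correct and follows essentially the same route as the paper: discretize levelwise via \eqref{ek:lp(lq)}, apply Proposition~\ref{prop14} with the dictionary $p=q_0$, $r=q_1$, $q=p_0$, $u=p_1$, $\alpha=r_0-1/p_0$, $\beta=r_1-1/p_1$ to the levels $\mu\lesssim\log_2 m$, use Proposition~\ref{prop15} above that range and the norm decay $\|\id_\mu\|\lesssim 2^{-\mu c}$ for the tail, and glue everything with \eqref{ek(T1+T2)}. The only (immaterial) difference is the bookkeeping of the high-frequency part: the paper applies Proposition~\ref{prop15} once to the single band $\{L_m+1,\dots,L_m+\lfloor m/8\rfloor\}$, after which the remaining tail norm is exponentially small in $m$, whereas you regroup the levels dyadically and use a constant number of applications of Proposition~\ref{prop15} plus operator-norm bounds; both versions work.
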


\begin{proof} The lower bound follows by \cite[Thm.\ 3.18]{vybiral:2006:diss}. The upper bound is the actual contribution. We argue as follows. 

{\em Step 1.} Put $\varrho:=\min\{1,p_1,q_1\}$ and fix $m\geq m_0$, where $m_0$ is large enough (depending on $p_0,p_1,q_0,q_1,r_0,r_1$). We decompose the identity operator $\id$ as follows
\begin{equation}\label{id_dec}
	\id = \Big(\sum\limits_{\mu = 0}^{L_m} \id_{\mu}\Big) + \Big(\sum\limits_{\mu = L_m+1}^{M_m+L_m} \id_{\mu}\Big) + \Big(\sum\limits_{\mu=M_m+L_m+1}^{\infty} \id_{\mu}\Big)\,,
\end{equation}
where $L_m := \lfloor \log_2(m) \rfloor$ and $M_m := \lfloor m/8 \rfloor$. With an eye on Proposition \ref{lem:ub_by_dyadic_covering} this means in particular that $m \geq 8L_m$ and $m\geq 8M_m$ (for $m$ large enough). Using \eqref{ek(T1+T2)} we obtain
\begin{equation}\label{e2m+1}
	e_{2m}(\id)^{\varrho} \lesssim e_m\Big(\sum\limits_{\mu = 0}^{L_m} \id_{\mu}\Big)^{\varrho} +
	e_m\Big(\sum\limits_{\mu = L_m+1}^{M_m+L_m}\id_{\mu}\Big)^{\varrho} + 
	\sum\limits_{\mu = L_m+M_m+1}^{\infty} \|\id_\mu\|^{\varrho}\,.
\end{equation}

{\em Step 2.} We estimate the first summand. By \eqref{ek:lp(lq)} this breaks down to the entropy numbers
\begin{equation}\label{e_m}
	e_m\Big(
	\id:\ell_{q_0}\Big((X_\mu)_{\mu \in I}\Big) \to 
	\ell_{q_1}\Big((Y_{\mu})_{\mu \in I}\Big)\Big)   
\end{equation}
with $X_\mu, Y_\mu$ chosen as after \eqref{ek:lp(lq)} and $I$ denotes the range for $\mu$. Putting 
$$
p:=q_0, r:=q_1, q := p_0, u:=p_1, d_\mu := 2^\mu, b_{\mu} := (\mu+1)^{n-1}, \alpha := r_0-1/p_0
$$
and $\beta :=r_1-1/p_1$ in \eqref{lp(lq)} we may apply Proposition \ref{prop14}\,. Since $m \geq \max\{8L_m, \max\limits_\mu d_\mu\}$ we may apply Proposition \ref{prop14} to obtain
\begin{equation}\label{res10}
     e_m \lesssim \Big(\frac{1}{m}\Big)^{\alpha-\beta+1/q-1/u} = \Big(\frac{1}{m}\Big)^{r_0-r_1}\,.
\end{equation}
Note that, due to Proposition \ref{prop14}, we only used that $r_0-r_1 \leq 1/q_0-1/q_1$. To estimate the first summand in \eqref{e2m+1} it is not needed that $r_0-r_1>1/p_0-1/p_1$.

{\em Step 3.} Let us care for the second summand in \eqref{e2m+1}. Clearly, it can be reduced to~\eqref{e_m} with spaces $X_\mu, Y_\mu$ defined analogously, but with $\mu$ running this time in the range
$$I = \{L_m+1,...,L_m+M_m\}.$$
Hence, we have $b := \sharp I = M_m \leq \min\limits_\mu d_\mu$\,. We apply Proposition \ref{prop15} to end up with \eqref{res10}. Note, that we have used here only that $\alpha-\beta>0$, or, equivalently, $r_0-r_1>1/p_0-1/p_1$\,.

{\em Step 4.} Finally, we deal with the third summand in \eqref{e2m+1}. Clearly, we have
$$\|\id_{\mu}\| \lesssim 2^{-\mu(r_0-r_1-1/p_0 + 1/p_1)}.$$
This gives 
\begin{equation}
  \begin{split}
    \sum\limits_{\mu = M_m+L_m+1}^{\infty}\|\id_\mu\|^{\varrho} 
	&\lesssim  \sum\limits_{\mu = M_m+L_m+1}^{\infty} 
	2^{-\varrho\mu(r_0-r_1-1/p_0 + 1/p_1)}\\
	&\simeq 2^{-\varrho(m/8+L_m)(r_0-r_1-1/p_0 + 1/p_1)}\\
	&\lesssim m^{-\varrho(r_0-r_1)}\,.
  \end{split}
\end{equation}
This concludes the proof.
\end{proof}
In the next theorem we consider the situation where a Besov type sequence space compactly embeds into a Triebel-Lizorkin type sequence space. This setting is particularly important, since it leads to results with target space $L_p$.

\begin{proposition}\label{thm:besov:res2} Let $\Omega$ be a bounded domain and $0<q_0 < q_1 \leq \infty$, $0<p_0\leq p_1 < \infty$, $q_0 <p_0$, $r_0>r_1$ such that  
$$
    1/p_0-1/p_1<r_0-r_1 \leq 1/q_0-1/q_1\,.
$$
Then we have  
\begin{equation}\nonumber
      e_m(\id:s^{r_0}_{p_0,q_0}b(\Omega) \to s^{r_1}_{p_1,q_1}f(\Omega)) \simeq m^{-(r_0-r_1)}\,.
\end{equation}
\end{proposition}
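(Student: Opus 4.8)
The lower bound is contained in \cite[Thm.\ 3.18]{vybiral:2006:diss}; alternatively it follows from Proposition~\ref{thm:besov:res1} together with the standard embedding $s^{r_1}_{p_1,q_1}f(\Omega)\hookrightarrow s^{r_1}_{p_1,\max\{p_1,q_1\}}b(\Omega)$, whose parameters still satisfy the hypotheses of that proposition (the smoothness gap only shrinks). For the upper bound I would follow the proof of Proposition~\ref{thm:besov:res1} almost verbatim. Put $\varrho:=\min\{1,p_1,q_1\}$, $L_m:=\lfloor\log_2 m\rfloor$, $M_m:=\lfloor m/8\rfloor$, decompose $\id=\sum_{\mu\ge 0}\id_\mu$ and, using \eqref{ek(T1+T2)}, split into the three ranges $\mu\le L_m$, $L_m<\mu\le L_m+M_m$, $\mu>L_m+M_m$ as in \eqref{id_dec}--\eqref{e2m+1}. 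The tail is treated exactly as in Step~4: since $\|\id_\mu:s^{r_0}_{p_0,q_0}b(\Omega)_\mu\to s^{r_1}_{p_1,q_1}f(\Omega)_\mu\|\lesssim 2^{-\mu(r_0-r_1-1/p_0+1/p_1)}$ (the inner parameters do not affect the leading factor) and $r_0-r_1-1/p_0+1/p_1>0$, the geometric sum over $\mu>L_m+M_m$ is $\lesssim m^{-\varrho(r_0-r_1)}$.

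By the block identity analogous to \eqref{ek:lp(lq)}, the first two terms reduce to $e_m$ of the block-diagonal operator $\bigoplus_{\mu\in I}\id_\mu:\ell_{q_0}\big((X_\mu)_{\mu\in I}\big)\to\ell_{q_1}\big((Y_\mu)_{\mu\in I}\big)$ with $X_\mu=2^{\mu(r_0-1/p_0)}\ell_{q_0}^{(\mu+1)^{n-1}}(\ell_{p_0}^{2^\mu})$ and now $Y_\mu=s^{r_1}_{p_1,q_1}f(\Omega)_\mu$. Proposition~\ref{lem:ub_by_dyadic_covering} (middle range, where $m\ge 8M_m$) and its modification along the lines of Proposition~\ref{prop_small_k} (low range) bound this by $\max_{\mu\in I}\max_{1\le\ell\le m}(\ell/m)^{1/q_0-1/q_1}e_\ell(\id_\mu:X_\mu\to Y_\mu)$, up to a term dominated by $m^{-(r_0-r_1)}$ (using $r_0-r_1\le 1/q_0-1/q_1$). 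If $q_1\le p_1$ one is done: the Besov--Triebel--Lizorkin embedding $s^{r_1}_{p_1,q_1}b(\Omega)_\mu\hookrightarrow s^{r_1}_{p_1,q_1}f(\Omega)_\mu$ holds uniformly in $\mu$ (constants depending only on $n,p_1,q_1$), so $e_\ell(\id_\mu:X_\mu\to Y_\mu)$ is controlled by the single-level quantity for the Besov building block, and the analysis of Propositions~\ref{prop14} and~\ref{prop15} applies verbatim (it uses only $r_0-r_1\le 1/q_0-1/q_1$ for the low range and $r_0-r_1>1/p_0-1/p_1$ for the middle range).

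The case $q_1>p_1$ is the genuine obstacle, and the reason for stating the proposition separately from Proposition~\ref{thm:besov:res1}. Now $s^{r_1}_{p_1,q_1}f(\Omega)$ contains only the Besov space $s^{r_1}_{p_1,p_1}b(\Omega)$, and when $r_0-r_1>1/q_0-1/p_1$ the embedding $s^{r_0}_{p_0,q_0}b(\Omega)\to s^{r_1}_{p_1,p_1}b(\Omega)$ lies in the large--smoothness regime, with entropy numbers of order $m^{-(r_0-r_1)}(\log m)^{(n-1)\eta}$; so a reduction to a Besov target is impossible without a spurious logarithm, and one must estimate $e_\ell(\id_\mu:X_\mu\to s^{r_1}_{p_1,q_1}f(\Omega)_\mu)$ directly. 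Via Vyb\'iral's discretization, the Triebel--Lizorkin building block is, uniformly in $\mu$, a $\simeq 2^\mu(\mu+1)^{n-1}$-dimensional quasi-Banach space in which the inner $\ell_{q_1}$-summation runs over the $(\mu+1)^{n-1}$ dyadic directions and the outer $\ell_{p_1}$-norm over space; because of the bounded overlap of the $\chi_{j,m}$ for fixed $j$ and because $(\mu+1)^{n-1}$ is only polylogarithmic in $d_\mu=2^\mu$, one can run the dyadic covering of Proposition~\ref{lem:ub_by_dyadic_covering} / Theorem~\ref{res:edne} directly on the block. The key technical point — and the only place where real work is needed — is that this yields, in the range $\log((\mu+1)^{n-1}2^\mu)\le\ell\le 2^\mu$, the same estimate $e_\ell(\id_\mu:X_\mu\to s^{r_1}_{p_1,q_1}f(\Omega)_\mu)\lesssim 2^{-\mu(r_0-r_1-1/p_0+1/p_1)}\big(\log(e2^\mu/\ell)/\ell\big)^{1/p_0-1/p_1}$ as for the Besov block and \emph{with no extra logarithmic factor}, together with $e_\ell(\id_\mu:X_\mu\to s^{r_1}_{p_1,q_1}f(\Omega)_\mu)\lesssim 2^{-\mu(r_0-r_1-1/p_0+1/p_1)}(2^\mu/\ell)^{1/q_0-1/q_1}2^{-\mu(1/p_0-1/p_1)}$ for $2^\mu\le\ell\le(\mu+1)^{n-1}2^\mu$; here the hypothesis $q_0<p_0$ ensures that the relevant mixed-norm exponents satisfy $1/q_0-1/q_1>1/p_0-1/q_1\ge 0$, cf.\ \cite[Ch.\ 4]{mayer:2018:thesis}. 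Granting these single-level estimates, the $\max_\mu\max_\ell$ is evaluated precisely as in the proofs of Propositions~\ref{prop14} and~\ref{prop15} — the limiting case $r_0-r_1=1/q_0-1/q_1$ being unproblematic since there the exponent of $2^\mu$ in the per-$\mu$ bound is exactly $0$ — and yields $e_m(\id)\lesssim m^{-(r_0-r_1)}$. The heart of the matter is thus the direct single-level covering of the Triebel--Lizorkin block (in particular, that it costs no logarithm beyond what the Besov block costs); the rest is the bookkeeping of Proposition~\ref{thm:besov:res1}.
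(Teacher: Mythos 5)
Your treatment of the lower bound, the tail, and the case $q_1\le p_1$ matches the paper (Step 1 of its proof factors $\id$ through $s^{r_1}_{p_1,q_1}b(\Omega)$ exactly as you do), but in the main case $p_1<q_1$ your argument has a genuine gap. You assert that ``a reduction to a Besov target is impossible without a spurious logarithm'' and therefore propose to run the dyadic covering of Proposition~\ref{lem:ub_by_dyadic_covering} directly on the Triebel--Lizorkin building block $s^{r_1}_{p_1,q_1}f(\Omega)_\mu$, granting as a ``key technical point'' the single-level estimates $e_\ell(\id_\mu:X_\mu\to s^{r_1}_{p_1,q_1}f(\Omega)_\mu)$ with no extra logarithm. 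This is precisely the hard part, and you do not prove it: for $p_1\neq q_1$ the block $s^{r_1}_{p_1,q_1}f(\Omega)_\mu$ is not (uniformly in $\mu$) a mixed $\ell_{q_1}(\ell_{p_1})$ space — the cubes $Q_{j,m}$ for different $j$ with $\|j\|_1=\mu$ interleave, so the outer $L_{p_1}$-norm and the inner $\ell_{q_1}$-sum over directions cannot simply be decoupled — and no analogue of Theorem~\ref{sec:mixed:res:entropy_matching_bounds} for such blocks is available in the paper or in the sources you cite. (The paper's closing remark that the $f$--$f$ situation remains open is a sign that direct entropy analysis of $f$-blocks is not routine.) A proof along your lines would have to supply these estimates from scratch.

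The paper avoids this entirely, and your impossibility claim only rules out a \emph{single} Besov intermediary for the whole operator. After the decomposition \eqref{id_dec}--\eqref{e2m+1}, the paper factors the two summands through \emph{different} Besov spaces: the low-block part $\sum_{\mu\le L_m}\id_\mu$ through $s^{r_1}_{q_1,q_1}b(\Omega)$, which embeds boundedly into $s^{r_1}_{p_1,q_1}f(\Omega)$ because $p_1<q_1$ and $\Omega$ is bounded, and the middle-block part through $s^{r_1}_{p_1,p_1}b(\Omega)$, which embeds by monotonicity of the inner index in the $f$-scale. The point is that each factorization preserves exactly the hypothesis needed for the corresponding estimate: Proposition~\ref{prop14} for the low blocks only needs $r_0-r_1\le 1/q_0-1/q_1$, which is untouched when the target's outer/inner integrability becomes $q_1$, while Proposition~\ref{prop15} for the middle blocks only needs $r_0-r_1>1/p_0-1/p_1$ together with $1/q_0-1/p_1>1/p_0-1/p_1\ge 0$, i.e.\ precisely the hypothesis $q_0<p_0$, which survives changing the inner target index to $p_1$. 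With these two commutative diagrams everything reduces to the already proved Besov-block machinery, and no entropy estimate for an $f$-block is ever required. So your proposal is not a complete proof as it stands: either supply the single-level Triebel--Lizorkin block estimates you postulate, or replace that part by the piecewise factorization just described.
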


\begin{proof} Again, the lower bounds follow from \cite[Thm.\ 3.18]{vybiral:2006:diss}.

{\em Step 1.} In case $p_1 > q_1$ we use the commutative diagram in Figure~\ref{fig1}.
\begin{figure}[t]
	\centering
		\begin{tikzpicture}
			\tikzset{node distance=3cm, auto}
	      	\node (H) {$s^{r_0}_{p_0,q_0}b(\Omega)$};
			\node (L) [right of =H] {$s^{r_1}_{p_1,q_1}f(\Omega)$};
			\node (T) [below of =L] {$s^{r_1}_{p_1,q_1}b(\Omega)$};
			\draw[->] (H) to node[left] {$\id$} (T);
			\draw[->] (H) to node {$\id$} (L);
			\draw[->] (T) to node[right] {$\id$} (L);
		\end{tikzpicture}
	\caption{Decomposition of $\id$ in the case $p_1 \geq q_1$.}
	\label{fig1}
\end{figure}
Then we have by~\eqref{ek(RS)} and~\eqref{ek(RS)2}
\begin{equation}
  \begin{split}
	&e_m(\id:s^{r_0}_{p_0,q_0}b(\Omega) \to s^{r_1}_{p_1,q_1}f(\Omega)) \\
	&~~~~\lesssim e_m(\id:s^{r_0}_{p_0,q_0}b(\Omega) \to s^{r_1}_{p_1,q_1}b(\Omega))
	\cdot \|\id:s^{r_1}_{p_1,q_1}b(\Omega) \to s^{r_1}_{p_1,q_1}f(\Omega)\|\,.
  \end{split}
\end{equation}  
Hence, we may use Proposition \ref{thm:besov:res1} and obtain
  \begin{equation}
	e_m(\id:s^{r_0}_{p_0,q_0}b(\Omega) \to s^{r_1}_{p_1,q_1}f(\Omega)) \\
	\lesssim m^{-(r_0-r_1)}\,.
  \end{equation}

{\em Step 2.} Now we consider $p_1 < q_1$. After decomposing the identity operator in an analogous way as in \eqref{id_dec} and \eqref{e2m+1} we use the commutative diagrams in Figure \ref{fig2} for the first and second summand, respectively. 
In fact, for the first summand in \eqref{e2m+1} we obtain by \eqref{ek(RS)2}
\begin{equation}
  \begin{split}
	&e_m\Big(\sum\limits_{\mu = 0}^{L_m} 
	\id_{\mu}:s^{r_0}_{p_0,q_0}b(\Omega) \to 
	s^{r_1}_{p_1,q_1}f(\Omega)\Big)\\
	&~~~~\lesssim e_m\Big(\sum\limits_{\mu = 0}^{L_m} 
	\id_{\mu}:s^{r_0}_{p_0,q_0}b(\Omega) \to 
	s^{r_1}_{q_1,q_1}b(\Omega)\Big)\cdot 
	\|\id:s^{r_1}_{q_1,q_1}b(\Omega) \to s^{r_1}_{p_1,q_1}f(\Omega)\|\,.
   \end{split}	
\end{equation}
Note, that the identity operator is bounded since $\Omega$ is a bounded domain. 
\begin{figure}[h]
	\centering
	\begin{minipage}{0.48\textwidth}
		\centering
		\begin{tikzpicture}
			\tikzset{node distance=3cm, auto}
	      	\node (H) {$s^{r_0}_{p_0,q_0}b(\Omega)$};
			\node (L) [right of =H] {$s^{r_1}_{p_1,q_1}f(\Omega)$};
			\node (T) [below of =L] {$s^{r_1}_{q_1,q_1}b(\Omega)$};
			\draw[->] (H) to node[left] {$\id_I$} (T);
			\draw[->] (H) to node {$\id_I$} (L);
			\draw[->] (T) to node[right] {$\id$} (L);
		\end{tikzpicture}
	\end{minipage}
	\begin{minipage}{0.48\textwidth}
		\centering
		\begin{tikzpicture}						
			\tikzset{node distance=3cm, auto}
			\node (H) {$s^{r_0}_{p_0,q_0}b(\Omega)$};
			\node (L) [right of =H] {$s^{r_1}_{p_1,q_1}f(\Omega)$};
			\node (T) [below of =L] {$s^{r_1}_{p_1,p_1}b(\Omega)$};
			\draw[->] (H) to node[left] {$\id_I$} (T);
			\draw[->] (H) to node {$\id_I$} (L);
			\draw[->] (T) to node[right] {$\id$} (L);
		\end{tikzpicture}
	\end{minipage}
	\caption{Decomposition of the operator $\id_I = \sum_{\mu\in I} \id_\mu$ in the case $p_1 < q_1$\,.}
	\label{fig2}    
\end{figure}
Furthermore, the entropy numbers
$$
   e_m\Big(\sum\limits_{\mu = 0}^{L_m} 
	\id_{\mu}:s^{r_0}_{p_0,q_0}b(\Omega) \to s^{r_1}_{q_1,q_1}b(\Omega)\Big) \lesssim m^{-(r_0-r_1)} 
$$
can be estimated by the same reasoning as in Step 2 of the proof of Proposition~\ref{thm:besov:res1}. Note that $r_0-r_1$ may be smaller than  $1/p_0 - 1/q_1$. However, this is not important for the argument (based on Proposition~\ref{prop14}). It remains to consider the second summand in \eqref{e2m+1}. Here we use the right diagram in Figure~\ref{fig2} and obtain
\begin{equation}
  \begin{split}
	&e_m\Big(\sum\limits_{\mu = L_m+1}^{M_m+L_m} 
	\id_{\mu}:s^{r_0}_{p_0,q_0}b(\Omega) \to 
	s^{r_1}_{p_1,q_1}f(\Omega)\Big)\\
	&~~~~\lesssim e_m\Big(\sum\limits_{\mu = L_m+1}^{M_m+L_m} 
	\id_{\mu}:s^{r_0}_{p_0,q_0}b(\Omega) \to 
	s^{r_1}_{p_1,p_1}b(\Omega)\Big)\cdot 
	\|\id:s^{r_1}_{p_1,p_1}b(\Omega) \to s^{r_1}_{p_1,q_1}f(\Omega)\|\,.
   \end{split}	
\end{equation}
We continue to estimate the appearing entropy numbers as in Step 3 of the proof of Proposition \ref{thm:besov:res1}. Note that $r_0-r_1$ might be larger than $1/q_0-1/p_1$. However, for the argument, we only need $r_0-r_1>1/p_0-1/p_1$. This concludes the proof. 
\end{proof}

Let us finally consider the situation, where a Triebel-Lizorkin type sequence space compactly embeds into a Besov type sequence space. 

\begin{proposition}\label{thm:besov:res3} Let $0<q_0 < q_1 \leq \infty$, $0<p_0\leq p_1 < \infty$, $q_1 > p_1$, $r_0>r_1$ such that  
$$
    1/p_0-1/p_1<r_0-r_1 \leq 1/q_0-1/q_1\,,
$$
and let $\Omega$ be a bounded domain.
Then we have  
\begin{equation}\label{res1}
      e_m(\id:s^{r_0}_{p_0,q_0}f(\Omega) \to s^{r_1}_{p_1,q_1}b(\Omega)) \simeq m^{-(r_0-r_1)}\,.
\end{equation}
\end{proposition}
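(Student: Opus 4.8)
Since the lower bound $e_m\gtrsim m^{-(r_0-r_1)}$ is already contained in \cite[Thm.~3.18]{vybiral:2006:diss}, only the matching upper bound has to be proved, and I would do this by distinguishing whether $q_0\ge p_0$ or $q_0<p_0$.

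If $q_0\ge p_0$, then $\max\{p_0,q_0\}=q_0$, so the sandwich embedding $s^{r_0}_{p_0,q_0}f(\Omega)\hookrightarrow s^{r_0}_{p_0,q_0}b(\Omega)$ is bounded, and \eqref{ek(RS)2} gives
\[
 e_m\bigl(\id:s^{r_0}_{p_0,q_0}f(\Omega)\to s^{r_1}_{p_1,q_1}b(\Omega)\bigr)\le\bigl\|\id:s^{r_0}_{p_0,q_0}f(\Omega)\to s^{r_0}_{p_0,q_0}b(\Omega)\bigr\|\,e_m\bigl(\id:s^{r_0}_{p_0,q_0}b(\Omega)\to s^{r_1}_{p_1,q_1}b(\Omega)\bigr),
\]
and the right-hand side is $\lesssim m^{-(r_0-r_1)}$ by Proposition~\ref{thm:besov:res1}, whose hypotheses are implied by the present ones.

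For $q_0<p_0$ a plain embedding of the source into a $b$-space of smoothness $r_0$ is only possible into $s^{r_0}_{p_0,p_0}b(\Omega)$, which is too coarse (Proposition~\ref{thm:besov:res1} would then require $r_0-r_1\le 1/p_0-1/q_1$, which need not hold). Instead I would mimic the proof of Proposition~\ref{thm:besov:res2}: decompose $\id=\sum_{\mu\ge0}\id_\mu$ into the three groups $\mu\le L_m$, $L_m<\mu\le L_m+M_m$ and $\mu>L_m+M_m$ with $L_m=\lfloor\log_2 m\rfloor$, $M_m=\lfloor m/8\rfloor$, and apply \eqref{ek(T1+T2)} exactly as in \eqref{id_dec}. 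For the tail one keeps the original source and uses $\|\id_\mu\|\lesssim 2^{-\mu(r_0-r_1-1/p_0+1/p_1)}$; since $r_0-r_1>1/p_0-1/p_1$ this decays geometrically and contributes $\lesssim m^{-\varrho(r_0-r_1)}$. For the head and the middle one first replaces the source by $s^{r_0}_{q_0,q_0}b(\Omega)$: because $\Omega$ is bounded and $q_0<p_0$, Hölder's inequality in the space variable (together with Fubini) yields the bounded embedding $s^{r_0}_{p_0,q_0}f(\Omega)\hookrightarrow s^{r_0}_{q_0,q_0}b(\Omega)$, and now both source and target are $b$-spaces, so the reduction \eqref{ek:lp(lq)} to vector-valued sequence spaces applies to these two finite pieces. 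To them I would apply the Edmunds--Netrusov covering of Proposition~\ref{lem:ub_by_dyadic_covering}, which reduces matters to the single-level entropy numbers $e_\ell(\id:s^{r_0}_{q_0,q_0}b(\Omega)_\mu\to s^{r_1}_{p_1,q_1}b(\Omega)_\mu)$; these are explicit mixed-norm entropy numbers governed by Theorem~\ref{sec:mixed:res:entropy_matching_bounds}, and since $b_\mu=(\mu+1)^{n-1}\lesssim 2^\mu=d_\mu$ and $q_1>p_1$ one lands in its case~(i.b). Carrying out the ensuing double maximisation in Proposition~\ref{lem:ub_by_dyadic_covering}, using that each block is finite-dimensional of dimension $\simeq(\mu+1)^{n-1}2^\mu$ (so that only $\mu\simeq L_m$ contributes in the head and only $\mu\gtrsim L_m$ in the middle) and the hypothesis $r_0-r_1\le 1/q_0-1/q_1$ (to keep the relevant sequences essentially monotone), one arrives at the bound $m^{-(r_0-r_1)}$ for both pieces, which together with the tail estimate finishes the proof.

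The main obstacle is precisely this case $q_0<p_0$: the $f$-source cannot be replaced globally by a single $b$-space — the only candidate of the correct smoothness, $s^{r_0}_{p_0,p_0}b(\Omega)$, destroys the $q_0$-index, while the embedding into $s^{r_0}_{q_0,q_0}b(\Omega)$ destroys compactness at high frequencies. One is therefore forced to treat low/middle frequencies and high frequencies by different devices, and the delicate point is to verify that the gain coming from the finite dimension of the low- and mid-frequency blocks exactly cancels the loss incurred by passing through $s^{r_0}_{q_0,q_0}b(\Omega)$, so that no spurious logarithmic factor survives.
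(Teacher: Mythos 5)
Your treatment of the case $q_0\ge p_0$ (factor through $s^{r_0}_{p_0,q_0}b(\Omega)$ and invoke Proposition~\ref{thm:besov:res1}), of the tail via $\|\id_\mu\|\lesssim 2^{-\mu(r_0-r_1-1/p_0+1/p_1)}$, and of the head $\mu\le L_m$ via the bounded embedding $s^{r_0}_{p_0,q_0}f(\Omega)\hookrightarrow s^{r_0}_{q_0,q_0}b(\Omega)$ all coincide with the paper's argument (Figures~\ref{fig3} and \ref{fig4}, left diagram, plus Proposition~\ref{prop14}, whose hypothesis is exactly $r_0-r_1\le 1/q_0-1/q_1$ and which does not need a positive exponent for $d_\mu$). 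The genuine gap is your treatment of the \emph{middle} summand $L_m<\mu\le L_m+M_m$: routing it through $s^{r_0}_{q_0,q_0}b(\Omega)$ as well cannot work in general. After this factorization, the level-$\mu$ block operator has norm $2^{-\mu\delta}$ with $\delta:=r_0-r_1-(1/q_0-1/p_1)$, and the hypotheses only give $1/p_0-1/p_1<r_0-r_1\le 1/q_0-1/q_1$, which leaves $\delta\le 0$ perfectly possible (e.g.\ $p_0=p_1=2$, $q_0=1/2$, $q_1=3$, $r_0-r_1=1$ gives $\delta=-1/2$). For such parameters and $\mu\simeq m/8$ one has $d_\mu=2^\mu\gg m$, so by Theorem~\ref{sec:mixed:res:entropy_matching_bounds}\,(i.b) the block entropy number at index $m$ is of order $2^{-\mu\delta}\bigl(\log(e2^\mu/m)/m\bigr)^{1/q_0-1/p_1}\gtrsim 1$; since each block is a norm-one complemented piece, the factorized middle operator has $e_m\gtrsim 1$, so no covering argument (nor the double maximisation in Proposition~\ref{lem:ub_by_dyadic_covering}) can recover $m^{-(r_0-r_1)}$ from this route. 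The claimed cancellation between "finite dimension of the mid-frequency blocks" and the loss from passing to $s^{r_0}_{q_0,q_0}b$ simply does not take place once $2^\mu\gg m$ and $\delta\le0$.

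The paper's fix, which you should adopt, is to use a \emph{different} factorization for the middle piece (right diagram in Figure~\ref{fig4}): since $q_0<p_0$, the elementary embedding $s^{r_0}_{p_0,q_0}f(\Omega)\hookrightarrow s^{r_0}_{p_0,p_0}b(\Omega)$ is bounded, and then Proposition~\ref{prop15} applies with $\alpha-\beta=r_0-r_1-1/p_0+1/p_1>0$ (this is where $r_0-r_1>1/p_0-1/p_1$ enters) and with inner/outer exponents $q=p_0$, $u=p_1$, $p=p_0$, $r=q_1$, where the standing assumption $q_1>p_1$ guarantees $1/p-1/r>1/q-1/u\ge0$; this yields $m^{-(r_0-r_1)}$ for the middle summand exactly as in Step~3 of Proposition~\ref{thm:besov:res1}. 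With that replacement your argument matches the paper's proof.
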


\begin{proof} The lower bound follows from \cite[Thm.\ 3.18]{vybiral:2006:diss}. 

{\em Step 1.} For the upper bound in the case $p_0<q_0$ we may use the commutative diagram in Figure \ref{fig3} below to decompose the identity operator. Afterwards, we use \eqref{ek(RS)} to reduce everything to the situation in Proposition \ref{thm:besov:res1}.
\begin{figure}[h]
	\centering
		\begin{tikzpicture}
			\tikzset{node distance=3cm, auto}
	      	\node (H) {$s^{r_0}_{p_0,q_0}f(\Omega)$};
			\node (L) [right of =H] {$s^{r_1}_{p_1,q_1}b(\Omega)$};
			\node (T) [below of =L] {$s^{r_0}_{p_0,q_0}b(\Omega)$};
			\draw[->] (H) to node[left] {$\id$} (T);
			\draw[->] (H) to node {$\id$} (L);
			\draw[->] (T) to node[right] {$\id$} (L);
		\end{tikzpicture}
	\caption{Decomposition of $\id$ in the case $p_0 < q_0$.}
	\label{fig3}
\end{figure}

{\em Step 2.} In case $p_0>q_0$ we argue analogously to Step 2 of Proposition \ref{thm:besov:res2}. This time we use the decompositions in Figure \ref{fig4} for the first and second summand in \eqref{e2m+1}, respectively.
\end{proof}

Unfortunately, we were not able to find a corresponding result for the $f-f$ situation. So, this remains an open problem. 

\begin{remark}\label{rem_vyb}
To clarify the contribution of this paper, let us briefly recapitulate the known results and open questions which motivated this work. For several results and historical remarks on the subject we refer to \cite{dung/ullrich/temlyakov:2015:hyperbolic_cross} and the references therein. In particular, Vyb\'iral \cite[Thm.\ 4.9]{vybiral:2006:diss} proved for $0<p_0\leq p_1 \leq \infty$ and $0<q_0\leq q_1 \leq \infty$ in the case of \emph{small smoothness}
$$1/p_0-1/p_1<r \leq 1/q_0-1/q_1,$$
that there is for any $\varepsilon>0$ a number $C_{\varepsilon}>0$ such that 
\begin{align}\label{vyb}
 c m^{-r} \leq e_m(\id: s_{p_0,q_0}^{r_0}b(\Omega) \to s_{p_1,q_1}^{r_1}b(\Omega)) \leq C_\varepsilon m^{-r} (\log m)^\varepsilon, \quad m \geq 2.
\end{align}
The result is a direct consequence of the bound for $r>\max\{1/p_0-1/p_1,1/q_0-1/q_1\}$ (the case of ``large smoothness''), saying that 
\begin{equation}\label{vyblarge}
    e_m(\id: s_{p_0,q_0}^{r_0}b(\Omega) \to s_{p_1,q_1}^{r_1}b(\Omega)) \simeq m^{-r}(\log m)^{(n-1)(r_0-r_1-1/q_0+1/q_1)_+}\,.
\end{equation}
In fact, the entropy numbers in \eqref{vyb} can be bounded from above by $$e_m(\id: s_{p_0,q^{\ast}}^{r_0}b(\Omega) \to s_{p_1,q_1}^{r_1}b(\Omega))$$ if $q^{\ast}\geq q_0$. Now choose 
$q_1>q^{\ast}>q_0$ such that $1/q^{\ast}-1/q_1+\varepsilon/(n-1)=r_0-r_1>1/q^{\ast}-1/q_1$, which, together with \eqref{vyblarge} and $q_0$ replaced by $q^{\ast}$, implies \eqref{vyb}.  \qed
\end{remark}

The propositions proved above allow to improve a number of existing results for the entropy numbers of the embedding
$$
\Id:S^{r_0}_{p_0,q_0}A(\Omega) \to S^{r_1}_{p_1,q_1}A^{\dag}(\Omega)\,.
$$

\begin{theorem}\label{sec:besov:lem:literally} Let $\Omega$ be a bounded domain and $A, A^{\dag} \in \{B,F\}$ but $(A,A^{\dag}) \neq (F,F)$. Let $0<q_0 < q_1 \leq \infty$, $0<p_0\leq p_1 < \infty$ and $r_0>r_1$ such that  
$$
    1/p_0-1/p_1<r_0-r_1 \leq 1/q_0-1/q_1\,.
$$
In addition, we assume $q_0<p_0$ if $(A, A^{\dag}) = (B,F)$ and $q_1>p_1$ if $(A, A^{\dag}) = (F,B)$, respectively. Then it holds 
$$
	e_m(\Id:S^{r_0}_{p_0,q_0}A(\Omega) \to S^{r_1}_{p_1,q_1}A^{\dag}(\Omega))
\simeq m^{-(r_0-r_1)}\quad,\quad m\in \N\,.
$$
\end{theorem}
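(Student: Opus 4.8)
The plan is to reduce everything to the sequence-space Propositions~\ref{thm:besov:res1}, \ref{thm:besov:res2} and \ref{thm:besov:res3} by means of the wavelet discretization. First I would invoke the wavelet isomorphism recalled above, following \cite[Thm.~2.10 and Sect.~4.5]{vybiral:2006:diss}: choosing wavelets of sufficient smoothness and with sufficiently many vanishing moments, the coefficient map \eqref{wave_iso}, together with the boundedness of the associated restriction and extension operators on the bounded domain $\Omega$, yields isomorphisms $S^{r}_{p,q}B(\Omega)\cong s^{r}_{p,q}b(\Omega)$ and $S^{r}_{p,q}F(\Omega)\cong s^{r}_{p,q}f(\Omega)$, the $F$-case being available here because $p_0\leq p_1<\infty$.

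Writing $a,a^{\dag}\in\{b,f\}$ for the sequence scales attached to $A,A^{\dag}$, I would then transfer the entropy numbers: since $e_m(\cdot)$ is invariant, up to a multiplicative constant determined by the norms of the two isomorphisms, under isomorphic changes of the underlying spaces (a consequence of \eqref{ek(RS)} and \eqref{ek(RS)2}), and since these norms depend only on $\Omega$ and on the fixed parameters $n,p_0,p_1,q_0,q_1,r_0,r_1$, one obtains
$$
  e_m\big(\Id:S^{r_0}_{p_0,q_0}A(\Omega)\to S^{r_1}_{p_1,q_1}A^{\dag}(\Omega)\big)\simeq e_m\big(\id:s^{r_0}_{p_0,q_0}a(\Omega)\to s^{r_1}_{p_1,q_1}a^{\dag}(\Omega)\big),\qquad m\in\N,
$$
with implied constants independent of $m$. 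It then remains to go through the three admissible constellations. For $(A,A^{\dag})=(B,B)$ the right-hand side is precisely the quantity estimated in Proposition~\ref{thm:besov:res1}, whose hypotheses coincide with those assumed here. For $(A,A^{\dag})=(B,F)$ the additional assumption $q_0<p_0$ is in force, so Proposition~\ref{thm:besov:res2} applies. For $(A,A^{\dag})=(F,B)$ the additional assumption $q_1>p_1$ holds, so Proposition~\ref{thm:besov:res3} applies. Since $(A,A^{\dag})=(F,F)$ is excluded by hypothesis, this exhausts all cases and yields $e_m\simeq m^{-(r_0-r_1)}$.

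There is no genuine obstacle beyond what is already contained in Propositions~\ref{thm:besov:res1}--\ref{thm:besov:res3}; all the analytic content sits there. The one point I would treat with a little care is checking that the operator norms of the wavelet isomorphisms in the first step depend solely on the fixed data, so that the equivalence $\simeq$ is genuinely uniform in $m\in\N$ (including the trivial range of small $m$, where both sides are comparable to a constant). This is classical and is carried out in detail in \cite[Sect.~4.5]{vybiral:2006:diss}.
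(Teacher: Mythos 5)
Your argument is correct and coincides with the paper's own proof: the paper likewise deduces the theorem directly from Propositions~\ref{thm:besov:res1}, \ref{thm:besov:res2} and \ref{thm:besov:res3} together with the wavelet/extension--restriction machinery of \cite[Thm.~4.11, Sect.~4.5]{vybiral:2006:diss}, which is exactly your transfer step. The only cosmetic remark is that on a bounded domain this machinery is usually phrased as a factorization of $\Id$ through the sequence-space embedding via bounded coefficient, extension and synthesis operators (giving two-sided entropy-number estimates via \eqref{ek(RS)2}) rather than as a literal isomorphism, but this does not affect the argument.
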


\begin{proof} The result is a direct consequence of Propositions \ref{thm:besov:res1}, \ref{thm:besov:res2}, \ref{thm:besov:res3} and the machinery described in the proof of \cite[Thm.\ 4.11]{vybiral:2006:diss}. 
\end{proof}

\noindent As a corollary of Theorem \ref{sec:besov:lem:literally}, we obtain the following result, which settles Open Problem~6.4 in \cite{dung/ullrich/temlyakov:2015:hyperbolic_cross}.

\begin{corollary}\label{sec:besov:cor:op1} Let $\Omega$ be as above. Let further  $0<q<p_0\leq p_1$, $1<p_1<\infty$ and 
$1/p_0-1/p_1<r\leq 1/q-1/2$. Then we have
\begin{equation}\nonumber
		e_m(\Id:S^r_{p_0,q}B(\Omega)\to L_{p_1}(\Omega)) \simeq m^{-r}\,.
\end{equation}
\end{corollary}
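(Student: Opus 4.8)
The plan is to derive the corollary directly from Theorem~\ref{sec:besov:lem:literally} by specializing the parameters and invoking the standard identification of $L_{p_1}(\Omega)$ with a Triebel--Lizorkin space of dominating mixed smoothness. First I would set $q_0 := q$, $q_1 := 2$, $r_0 := r$, $r_1 := 0$, $A := B$ and $A^{\dag} := F$. The target space then becomes $S^0_{p_1,2}F(\Omega)$, which equals $L_{p_1}(\Omega)$ because $S^0_{p,2}F(\R^n) = L_p(\R^n)$ for $1<p<\infty$ (as recalled in Section~\ref{sec:besov}) and this identification transfers to the bounded domain $\Omega$ via the restriction/extension machinery. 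So the left-hand side $e_m(\Id : S^r_{p_0,q}B(\Omega) \to L_{p_1}(\Omega))$ coincides with $e_m(\Id : S^{r_0}_{p_0,q_0}B(\Omega) \to S^{r_1}_{p_1,q_1}F(\Omega))$.

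Next I would verify that the hypotheses of Theorem~\ref{sec:besov:lem:literally} are met for the case $(A,A^{\dag}) = (B,F)$. The condition $0 < q_0 < q_1 \leq \infty$ reads $0 < q < 2$, which follows from $q < p_0 \leq p_1$ and, more to the point, from $r \leq 1/q - 1/2$ together with $r>0$, forcing $1/q > 1/2$, i.e. $q < 2$. The condition $0 < p_0 \leq p_1 < \infty$ is given. The smoothness condition $1/p_0 - 1/p_1 < r_0 - r_1 \leq 1/q_0 - 1/q_1$ becomes exactly $1/p_0 - 1/p_1 < r \leq 1/q - 1/2$, which is the hypothesis of the corollary. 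Finally, the case $(A,A^{\dag}) = (B,F)$ in Theorem~\ref{sec:besov:lem:literally} additionally requires $q_0 < p_0$, which here reads $q < p_0$ and is assumed. Thus all hypotheses hold.

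Having matched everything up, Theorem~\ref{sec:besov:lem:literally} yields
\[
  e_m(\Id : S^{r_0}_{p_0,q_0}B(\Omega) \to S^{r_1}_{p_1,q_1}F(\Omega)) \simeq m^{-(r_0-r_1)} = m^{-r},
\]
and after the identification $S^0_{p_1,2}F(\Omega) = L_{p_1}(\Omega)$ this is precisely the claimed asymptotic $e_m(\Id : S^r_{p_0,q}B(\Omega) \to L_{p_1}(\Omega)) \simeq m^{-r}$. I do not anticipate any genuine obstacle here: the whole argument is a bookkeeping exercise, checking that the parameter constraints of the corollary translate into those of Theorem~\ref{sec:besov:lem:literally}. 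The only mild subtlety worth a sentence is the derivation $q<2$ from $r \leq 1/q-1/2$ and $r>0$ (the corollary does not state $q<2$ explicitly), and the fact that the $L_{p_1}$-identification requires $1<p_1<\infty$, which is part of the hypothesis. Everything else is contained in the results already established.
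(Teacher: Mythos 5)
Your proposal is correct and follows exactly the paper's own argument: identify $L_{p_1}(\Omega)$ with $S^0_{p_1,2}F(\Omega)$ for $1<p_1<\infty$ and apply Theorem~\ref{sec:besov:lem:literally} with $(A,A^{\dag})=(B,F)$, $q_0=q$, $q_1=2$, $r_0=r$, $r_1=0$. Your explicit verification of the parameter conditions (in particular deducing $q<2$ from $r>1/p_0-1/p_1\geq 0$ and $r\leq 1/q-1/2$) is just a more detailed write-up of the same bookkeeping the paper leaves implicit.
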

\begin{proof}
Identifying $S^0_{p_1,2}F(\Omega) = L_{p_1}(\Omega)$ in the case $1<p_1<\infty$, the result is a direct consequence of Theorem \ref{sec:besov:lem:literally}.
\end{proof}

\noindent With the final corollary below (from Theorem \ref{sec:besov:lem:literally}) we close some more gaps in \cite[Thm.\ 4.18 (ii), (iii)]{vybiral:2006:diss}.

\begin{corollary}\label{sec:besov:cor:op2} Let $\Omega$ be as above. 
We have the following sharp bounds for entropy numbers.

\noindent{\em (i)} Let $1 < p \leq \infty$ and $1/p<r\leq 1$. Then, we have 
$$
    e_m(\Id:S^r_{p,1}B(\Omega) \to S^0_{\infty,\infty}B(\Omega))
    \simeq m^{-r}\,.
$$

\noindent{\em (ii)} Let $1 < p < q <\infty$ and $1/p-1/q<r\leq 1/2$. Then, we have 
$$
    e_m(\Id:S^r_pW(\Omega) \to S^0_{q,\infty}B(\Omega)) \simeq m^{-r}\,.
$$

\noindent{\em{(iii)}} Let $0< q<p \leq \infty$, $q<1$ and 
$1/p < r \leq 1/q - 1$. Then, we have
\[ 
 e_m(\Id:S^r_{p,q}B(\Omega) \to L_\infty(\Omega)) \simeq m^{-r}.
\]
\end{corollary}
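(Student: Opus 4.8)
The plan is to obtain all three estimates from Theorem~\ref{sec:besov:lem:literally} and Proposition~\ref{thm:besov:res1}, after identifying the spaces $S^r_pW$ and $L_\infty$ with (sandwiches of) spaces on the Besov/Triebel--Lizorkin scale of dominating mixed smoothness and using the transference of \cite[Sect.~4.5]{vybiral:2006:diss} to move between function spaces $S^{r}_{p,q}A(\Omega)$ and the sequence spaces $s^{r}_{p,q}a(\Omega)$ on which Proposition~\ref{thm:besov:res1} is phrased. For each item the substance is just to check that the parameters lie in the small-smoothness window $1/p_0-1/p_1<r_0-r_1\le 1/q_0-1/q_1$, so that the matching rate $m^{-(r_0-r_1)}$ from those results applies. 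Parts (i) and (ii) will come out at once; part (iii) needs a short sandwich argument because $L_\infty$ is not itself a Besov (or Triebel--Lizorkin) space of mixed smoothness.

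For part (i), $\Id:S^r_{p,1}B(\Omega)\to S^0_{\infty,\infty}B(\Omega)$ is of type $(B,B)$ with $(p_0,q_0,r_0)=(p,1,r)$ and $(p_1,q_1,r_1)=(\infty,\infty,0)$. Since $p_1=\infty$ I would not invoke Theorem~\ref{sec:besov:lem:literally} verbatim but rather Proposition~\ref{thm:besov:res1} (which permits $p_1\le\infty$) together with the transference of \cite{vybiral:2006:diss}; its hypotheses $q_0<q_1$, $p_0\le p_1$, $1/p_0-1/p_1<r_0-r_1\le 1/q_0-1/q_1$ reduce to $1<\infty$, $p\le\infty$, $1/p<r\le 1$, which is precisely the assumption. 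For part (ii) I would write $S^r_pW(\Omega)=S^r_{p,2}F(\Omega)$, so that $\Id:S^r_pW(\Omega)\to S^0_{q,\infty}B(\Omega)$ is of type $(F,B)$ with $(p_0,q_0,r_0)=(p,2,r)$ and $(p_1,q_1,r_1)=(q,\infty,0)$; then $0<q_0=2<q_1=\infty$, $0<p_0=p\le p_1=q<\infty$, $r_0>r_1$ since $r>1/p-1/q>0$, the small-smoothness condition reads $1/p-1/q<r\le 1/2$, and the extra hypothesis $q_1>p_1$ needed in the $(F,B)$ case holds as $\infty>q$; hence Theorem~\ref{sec:besov:lem:literally} applies directly (the relevant upper threshold on $r$ is $1/q_0-1/q_1=1/2$, not something involving $p_1$, so $r>1/2-1/q$ is harmless).

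For part (iii) I would first record the embeddings $S^0_{\infty,1}B(\Omega)\hookrightarrow L_\infty(\Omega)\hookrightarrow S^0_{\infty,\infty}B(\Omega)$, which follow from $\|f\|_\infty\le\sum_j\|\mathcal F^{-1}[\varphi_j\mathcal F f]\|_\infty$ and from the uniform bound $\|\mathcal F^{-1}[\varphi_j\mathcal F f]\|_\infty\lesssim\|f\|_\infty$ (passing to $\Omega$ via zero extension on the right). For the upper bound I would factor $\Id:S^r_{p,q}B(\Omega)\to L_\infty(\Omega)$ through $S^0_{\infty,1}B(\Omega)$ and apply \eqref{ek(RS)}, \eqref{ek(RS)2}: the first factor $\Id:S^r_{p,q}B(\Omega)\to S^0_{\infty,1}B(\Omega)$ is of type $(B,B)$ with $q_0=q<q_1=1$ (allowed since $q<1$), $p_0=p\le p_1=\infty$, and small-smoothness condition $1/p<r\le 1/q-1$, so Proposition~\ref{thm:besov:res1} plus transference gives $e_m\simeq m^{-r}$ there, and since $\|\Id:S^0_{\infty,1}B(\Omega)\to L_\infty(\Omega)\|\lesssim 1$ we obtain $e_m(\Id:S^r_{p,q}B(\Omega)\to L_\infty(\Omega))\lesssim m^{-r}$. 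For the lower bound I would instead factor $\Id:S^r_{p,q}B(\Omega)\to S^0_{\infty,\infty}B(\Omega)$ through $L_\infty(\Omega)$, use $\|\Id:L_\infty(\Omega)\to S^0_{\infty,\infty}B(\Omega)\|\lesssim 1$ and \eqref{ek(RS)}, \eqref{ek(RS)2} again, and apply the lower-bound half of Proposition~\ref{thm:besov:res1} (i.e.\ \cite[Thm.~3.18]{vybiral:2006:diss}) to $\Id:S^r_{p,q}B(\Omega)\to S^0_{\infty,\infty}B(\Omega)$; its parameters satisfy $q_0=q<q_1=\infty$ and $1/p<r\le 1/q$ because $r\le 1/q-1<1/q$, so $e_m\gtrsim m^{-r}$ there, whence $e_m(\Id:S^r_{p,q}B(\Omega)\to L_\infty(\Omega))\gtrsim m^{-r}$; combining the two bounds yields the claim.

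The only step beyond bookkeeping is part (iii): one has to be sure that $L_\infty$ genuinely sits between $S^0_{\infty,1}B$ and $S^0_{\infty,\infty}B$ in the mixed-smoothness scale, and, more to the point, that the small-smoothness window holds at \emph{both} ends of the sandwich. It does, and this is exactly the role of the hypothesis $1/p<r\le 1/q-1$: the sharper bound $r\le 1/q-1$ is what the $q_1=1$ endpoint demands for the upper estimate, while the $q_1=\infty$ endpoint only needs the weaker $r\le 1/q$ for the lower estimate. Everything else reduces to invoking Theorem~\ref{sec:besov:lem:literally} / Proposition~\ref{thm:besov:res1} together with the discretization machinery of \cite{vybiral:2006:diss}.
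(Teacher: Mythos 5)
Your proposal is correct and follows essentially the route the paper intends: the corollary is deduced from Theorem~\ref{sec:besov:lem:literally} and Proposition~\ref{thm:besov:res1} via the discretization machinery of \cite{vybiral:2006:diss}, with the parameter bookkeeping exactly as you carry it out. Your extra care at the endpoints — invoking Proposition~\ref{thm:besov:res1} (which allows $p_1=\infty$) where the theorem's hypothesis $p_1<\infty$ formally fails in (i) and (iii), and sandwiching $L_\infty(\Omega)$ between $S^0_{\infty,1}B(\Omega)$ and $S^0_{\infty,\infty}B(\Omega)$ with the factorization \eqref{ek(RS)2} for the upper and lower bounds in (iii) — supplies precisely the details the paper leaves implicit, and both estimates land in the required small-smoothness windows ($r\leq 1/q-1$ for the $q_1=1$ endpoint, $r\leq 1/q$ for the $q_1=\infty$ endpoint), so nothing is missing.
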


\begin{remark} Entropy numbers of mixed smoothness Sobolev-Besov embeddings into $L_p$, where~$1\leq p\leq \infty$, recently gained significant interest, see  \cite{Te17} and \cite{Ro16_1,Ro16_2,Ro19}. There are some fundamental open problems connected with $p=\infty$, see \cite[2.6, 6.4, 6.5]{dung/ullrich/temlyakov:2015:hyperbolic_cross}. Interestingly, when choosing the third index $q$ small enough in Corollaries \ref{sec:besov:cor:op1}, \ref{sec:besov:cor:op2} we get rid of the logarithm. 
\end{remark}

\begin{figure}[h]
	\centering
	\begin{minipage}{0.48\textwidth}
		\centering
		\begin{tikzpicture}
			\tikzset{node distance=3cm, auto}
	      	\node (H) {$s^{r_0}_{p_0,q_0}f(\Omega)$};
			\node (L) [right of =H] {$s^{r_1}_{p_1,q_1}b(\Omega)$};
			\node (T) [below of =L] {$s^{r_0}_{q_0,q_0}b(\Omega)$};
			\draw[->] (H) to node[left] {$\id$} (T);
			\draw[->] (H) to node {$\id_I$} (L);
			\draw[->] (T) to node[right] {$\id_I$} (L);
		\end{tikzpicture}
	\end{minipage}
	\begin{minipage}{0.48\textwidth}
		\centering
		\begin{tikzpicture}						
			\tikzset{node distance=3cm, auto}
			\node (H) {$s^{r_0}_{p_0,q_0}f(\Omega)$};
			\node (L) [right of =H] {$s^{r_1}_{p_1,q_1}b(\Omega)$};
			\node (T) [below of =L] {$s^{r_0}_{p_0,p_0}b(\Omega)$};
			\draw[->] (H) to node[left] {$\id$} (T);
			\draw[->] (H) to node {$\id_I$} (L);
			\draw[->] (T) to node[right] {$\id_I$} (L);
		\end{tikzpicture}
	\end{minipage}
	\caption{Decomposition of the operator $\id_I = \sum_{\mu\in I} \id_\mu$ in the case $p_0 > q_0$\,.}
	\label{fig4}    
\end{figure}

\paragraph{Acknowledgment.} The authors would like to thank Dinh D\~ung, Thomas K\"uhn, Van Kien Nguyen, Winfried Sickel and Vladimir N.\ Temlyakov for several discussions on the topic. They would also like to thank two anonymous referees for their valuable comments. 
T.U. and S.M.\ would like to acknowledge support by the DFG Ul-403/2-1 and by the Fraunhofer Cluster of Excellence ``Cognitive Internet Technologies''.

\bibliographystyle{abbrv}

\begin{thebibliography}{10}
\bibitem{akashi:1986:entropy}
S.~Akashi.
\newblock An operator theoretical characterization of {$\varepsilon$}-entropy
  in {G}aussian processes.
\newblock {\em Kodai Mathem. Journ.}, 9(1):58--67, 1986.

\bibitem{aoki:1942}
T.~Aoki.
\newblock Locally bounded linear topological spaces.
\newblock {\em Proceedings of the Imperial Academy}, 18(10):588--594, 1942.

\bibitem{billingsley:1965:ergodic_theory}
P.~Billingsley.
\newblock {\em Ergodic theory and information}.
\newblock John Wiley \& Sons, Inc., New York-London-Sydney, 1965.

\bibitem{carl:1981}
B.~Carl.
\newblock Entropy numbers, {$s$}-numbers, and eigenvalue problems.
\newblock {\em J. Funct. Anal.}, 41(3):290--306, 1981.

\bibitem{carl/stefani:1990}
B.~Carl and I.~Stephani.
\newblock {\em Entropy, compactness and the approximation of operators}.
\newblock Cambrige Univ. Press, Cambridge, 1990.

\bibitem{dirksen/ullrich:2017:gelfand}
S.~{Dirksen} and T.~{Ullrich}.
\newblock {Gelfand numbers related to structured sparsity and Besov space
  embeddings with small mixed smoothness}.
\newblock {\em J. Complexity}, 48:69--102, 2018.

\bibitem{dung:2001:nonlinear}
D.~D{\~u}ng.
\newblock Non-linear approximations using sets of finite cardinality or finite
  pseudo-dimension.
\newblock volume~17, pages 467--492. 2001.
\newblock 3rd Conference of the Foundations of Computational Mathematics
  (Oxford, 1999).

\bibitem{dung/ullrich/temlyakov:2015:hyperbolic_cross}
D.~{D}{\~u}ng, V.~N. {T}emlyakov, and T.~{U}llrich.
\newblock {\em {H}yperbolic {C}ross {A}pproximation}.
\newblock Advanced Courses in Mathematics. CRM Barcelona.
  Birkh\"auser/Springer, 2018.

\bibitem{edmunds/netrusov:1998:entropy}
D.~E. Edmunds and Y.~Netrusov.
\newblock Entropy numbers of embeddings of {S}obolev spaces in {Z}ygmund
  spaces.
\newblock {\em Studia Math.}, 128(1):71--102, 1998.

\bibitem{edmunds/netrusov:2014:schuett}
D.~E. Edmunds and Y.~Netrusov.
\newblock Sch\"utt's theorem for vector-valued sequence spaces.
\newblock {\em J. Approx. Theory}, 178:13--21, 2014.

\bibitem{edmunds/triebel:1996:function_spaces}
D.~E. Edmunds and H.~Triebel.
\newblock {\em Function spaces, entropy numbers, differential operators},
  volume 120 of {\em Cambridge Tracts in Mathematics}.
\newblock Cambridge University Press, Cambridge, 1996.

\bibitem{foucart/pajor/rauhut/ullrich:2010:gelfand}
S.~Foucart, A.~Pajor, H.~Rauhut, and T.~Ullrich.
\newblock The {G}elfand widths of {$\ell_p$}-balls for {$0< p\leq 1$}.
\newblock {\em Journal of Complexity}, 26(6):629--640, 2010.

\bibitem{rauhut/foucart:compressive_sensing}
S.~Foucart and H.~Rauhut.
\newblock {\em A mathematical introduction to compressive sensing}.
\newblock Applied and Numerical Harmonic Analysis. Birkh\"auser/Springer, New
  York, 2013.

\bibitem{gilbert:1952}
E.~{G}ilbert.
\newblock A comparison of signalling alphabets.
\newblock {\em Bell System Tech. Jour.}, 31:504--522, 1952.

\bibitem{guedon/litvak:2000:euclidean_projections}
O.~Gu\'edon and A.~E. Litvak.
\newblock Euclidean projections of a {$p$}-convex body.
\newblock In {\em Geometric aspects of functional analysis}, volume 1745 of
  {\em Lecture Notes in Math.}, pages 95--108. Springer, Berlin, 2000.

\bibitem{hinrichs/kolleck/vybiral:2016:carl_quasi}
A.~Hinrichs, A.~Kolleck, and J.~Vyb{\'i}ral.
\newblock Carl's inequality for quasi-{B}anach spaces.
\newblock {\em J. Funct. Anal.}, 271(8):2293--2307, 2016.

\bibitem{KeVy17}
H.~Kempka and J.~Vyb\'iral.
\newblock Volumes of unit balls of mixed sequence spaces.
\newblock {\em Mathematische Nachrichten}, 290(8-9):1317--1327, 2017.

\bibitem{kolmogorov:1956:entropy}
A.~N. Kolmogorov.
\newblock On certain asymptotic characteristics of completely bounded metric
  spaces.
\newblock {\em Dokl. Akad. Nauk SSSR (N.S.)}, 108:385--388, 1956.

\bibitem{kolmogorov:1956:entropy_dynamical_system}
A.~N. Kolmogorov.
\newblock A new metric invariant of transient dynamical systems and
  automorphisms in {L}ebesgue spaces.
\newblock {\em Dokl. Akad. Nauk SSSR (N.S.)}, 119:861--864, 1958.

\bibitem{koenig:1986:eigenvalue_distributions}
H.~K\"onig.
\newblock {\em Eigenvalue distribution of compact operators}.
\newblock Birkh\"auser Verlag, Basel, 1986.

\bibitem{kuelbs:1993:metric_entropy}
J.~Kuelbs and W.~V. Li.
\newblock Metric entropy and the small ball problem for {G}aussian measures.
\newblock {\em Journal of Functional Analysis}, 116(1):133--157, 1993.

\bibitem{kuehn:2001:lower_estimate}
T.~K\"uhn.
\newblock A lower estimate for entropy numbers.
\newblock {\em J. Approx. Theory}, 110(1):120--124, 2001.

\bibitem{kuehn/leopold/sickel/skrzypczak:2006:entropy}
T.~K{\"u}hn, H.-G. Leopold, W.~Sickel, and L.~Skrzypczak.
\newblock Entropy numbers of embeddings of weighted {B}esov spaces.
\newblock {\em Constr. Approx.}, 23(1):61--77, 2006.

\bibitem{li:1999:approximation_metric_entropy}
W.~V. Li, W.~Linde, et~al.
\newblock Approximation, metric entropy and small ball estimates for {G}aussian
  measures.
\newblock {\em The Annals of Probability}, 27(3):1556--1578, 1999.

\bibitem{lorentz/golitschek/makovoz:1996:cs_advanced}
G.~G. Lorentz, M.~v. Golitschek, and Y.~Makovoz.
\newblock {\em Constructive approximation}, volume 304 of {\em Grundlehren der
  Mathematischen Wissenschaften [Fundamental Principles of Mathematical
  Sciences]}.
\newblock Springer-Verlag, Berlin, 1996.
\newblock Advanced problems.

\bibitem{Ma75}
V.~E. Ma\u{\i}orov.
\newblock Discretization of the problem of diameters.
\newblock {\em Uspehi Mat. Nauk}, 30(6(186)):179--180, 1975.

\bibitem{mayer:2018:thesis}
S.~Mayer.
\newblock {\em Preasymptotics via metric entropy}.
\newblock Dr. Hut Verlag, Munich, 2018.
\newblock PhD thesis.

\bibitem{pietsch:1978:operator}
A.~Pietsch.
\newblock {\em Operator ideals}, volume~16.
\newblock Deutscher Verlag der Wissenschaften, 1978.

\bibitem{pietsch:s-numbers_and_eigenvalues}
A.~Pietsch.
\newblock {\em Eigenvalues and {$s$}-numbers}, volume~13 of {\em Cambridge
  Studies in Advanced Mathematics}.
\newblock Cambridge University Press, Cambridge, 1987.

\bibitem{pinkus:nwidth}
A.~Pinkus.
\newblock {\em {$n$}-widths in approximation theory}, volume~7 of {\em
  Ergebnisse der Mathematik und ihrer Grenzgebiete (3) [Results in Mathematics
  and Related Areas (3)]}.
\newblock Springer-Verlag, Berlin, 1985.

\bibitem{pontrjagin/schnirelmann:1932:entropy}
L.~Pontrjagin and L.~Schnirelmann.
\newblock Sur une propri\'et\'e m\'etrique de la dimension.
\newblock {\em Ann. of Math. (2)}, 33(1):156--162, 1932.

\bibitem{rolewicz:1957}
S.~Rolewicz.
\newblock On a certain class of linear metric spaces.
\newblock {\em Bull. Acad. Polon. Sci}, 5:471--473, 1957.

\bibitem{Ro16_2}
A.~S. Romanyuk.
\newblock Entropy numbers and widths of the classes {$B^r_{p,\theta}$} of
  periodic functions of many variables.
\newblock {\em Ukra\"{i}n. Mat. Zh.}, 68(10):1403--1417, 2016.

\bibitem{Ro16_1}
A.~S. Romanyuk.
\newblock Estimation of the entropy numbers and {K}olmogorov widths for the
  {N}ikol'skii-{B}esov classes of periodic functions of many variables.
\newblock {\em Ukrainian Math. J.}, 67(11):1739--1757, 2016.
\newblock Translation of Ukra\"{i}n. Mat. Zh. {{\bf{6}}7} (2015), no. 11,
  1540--1556.

\bibitem{Ro19}
A.~S. Romanyuk.
\newblock Entropy {N}umbers and {W}idths for the {N}ikol'skii--{B}esov
  {C}lasses of {F}unctions of {M}any {V}ariables in the {S}pace {$L_\infty$}.
\newblock {\em Anal. Math.}, 45(1):133--151, 2019.

\bibitem{schmeisser/triebel:1987:fourier_analysis}
H.-J. Schmeisser and H.~Triebel.
\newblock {\em Topics in {F}ourier analysis and function spaces}.
\newblock A Wiley-Interscience Publication. John Wiley \& Sons, Ltd.,
  Chichester, 1987.

\bibitem{schuett:1984:entropy}
C.~Sch\"utt.
\newblock Entropy numbers of diagonal operators between symmetric {B}anach
  spaces.
\newblock {\em J. Approx. Theory}, 40(2):121--128, 1984.

\bibitem{temlyakov:2011:greedy}
V.~Temlyakov.
\newblock {\em Greedy approximation}, volume~20 of {\em Cambridge Monographs on
  Applied and Computational Mathematics}.
\newblock Cambridge University Press, Cambridge, 2011.

\bibitem{Te17}
V.~Temlyakov.
\newblock On the entropy numbers of the mixed smoothness function classes.
\newblock {\em J. Approx. Theory}, 217:26--56, 2017.

\bibitem{varshamov:1957}
R.~{V}arshamov.
\newblock Estimate of the number of signals in error correcting codes.
\newblock {\em Dokl. Acad. Nauk SSSR}, 117:739--741, 1952.

\bibitem{vybiral:2006:diss}
J.~Vyb{\'i}ral.
\newblock Function spaces with dominating mixed smoothness.
\newblock {\em Dissertationes Math. (Rozprawy Mat.)}, 436:73, 2006.

\bibitem{williamson/smola/schoelkopf:1999:entropy}
R.~C. Williamson, A.~J. Smola, and B.~Sch{\"o}lkopf.
\newblock Entropy numbers, operators and support vector kernels.
\newblock In P.~Fischer and H.~U. Simon, editors, {\em Computational Learning
  Theory}, pages 285--299, Berlin, Heidelberg, 1999. Springer Berlin
  Heidelberg.

\end{thebibliography}

\end{document}